\newcommand{\printmode}{}  % \proquestmode for color links
\title{On The Prime Numbers In Intervals}
\author{Kyle D. Balliet}
\abstract{Bertrand's postulate establishes that for all positive integers $n>1$ there exists a prime number between $n$ and $2n$.  We consider a generalization of this theorem as:  for integers $n\geq k\geq 2$ is there a prime number between $kn$ and $(k+1)n$?  We use elementary methods of binomial coefficients and the Chebyshev functions to establish the cases for $2\leq k\leq 8$.  We then move to an analytic number theory approach to show that there is a prime number in the interval $(kn, (k+1)n)$ for at least $n\geq k$ and $2\leq k\leq 519$.

We then consider Legendre's conjecture on the existence of a prime number between $n^2$ and $(n+1)^2$ for all integers $n\geq 1$.  To this end, we show that there is always a prime number between $n^2$ and $(n+1)^{2.000001}$ for all $n\geq 1$.  Furthermore, we note that there exists a prime number in the interval $[n^2,(n+1)^{2+\varepsilon}]$ for any $\varepsilon>0$ and $n$ sufficiently large.

We also consider the question of how many prime numbers there are between $n$ and $kn$ for positive integers $k$ and $n$ for each of our results and in the general case.  Furthermore, we show that the number of prime numbers in the interval $(n,kn)$ is increasing and that there are at least $k-1$ prime numbers in $(n,kn)$ for $n\geq k\geq 2$.

Finally, we compare our results to the prime number theorem and obtain explicit lower bounds for the number of prime numbers in each of our results.}
\in\mathbb{Z}\mid m\leq x\}$\\
\begin{document}
\makefrontmatter

\newtheorem{Theorem}{\noindent Theorem}[section]
\newtheorem{Corollary}[Theorem]{\noindent Corollary}
\newtheorem{Lemma}[Theorem]{\noindent Lemma}
\newtheorem{Question}[Theorem]{\noindent Question}
\newtheorem{Example}[Theorem]{\noindent Example}
\newtheorem{Conjecture}[Theorem]{\noindent Conjecture}

% If you've disabled frontmatter, you can insert the toc manually
%\tableofcontents\clearpage

% \include lets us split up the document (and each include starts a new page):
\chapter{Introduction\label{intro}}

Bertrand's postulate is stated as:  For $n>1$ there is a prime number between $n$ and $2n$.  This, relatively simply stated, conjecture was stated by Joseph Bertrand in 1845 and was ultimately solved by Pafnuty Chebyshev in 1850, see \cite{chebyshev1850}.  Later, in 1919, it was shown by Srinivasa Ramanujan in \cite{ramanujan1919} using properties of the gamma function.  Finally, Paul Erd\H{o}s \cite{erdos1932,erdos2003} in 1932 showed the theorem using elementary properties, and approximations of binomial coefficients and the Chebyshev functions:
\begin{align*}
\vartheta(x)&=\displaystyle\sum_{p\leq x}\log p,\\
\psi(x)&=\displaystyle\sum_{\substack{p^k\leq x \\ k\in\mathbb{N}}}\log p.
\end{align*}

In 2006, Mohamed El Bachraoui \cite{bachraoui2006} showed that for $n>1$ there is a prime number between $2n$ and $3n$.  This similar result to Bertrand's postulate provided a smaller interval for a prime number to exist.  As an example, Bertrand's postulate guarantees $p\in(10,20)$ whereas Bachraoui guarantees $p\in(10,15)$.  Moreover, Bachraoui also questioned if there was a prime number between $kn$ and $(k+1)n$ for all $n\geq k\geq 2$.

Finally, in 2011 Andy Loo \cite{loo2011} shortened these intervals to $p\in(3n,4n)$ for $n>1$.  A. Loo went on to prove that as $n$ approaches infinity the number of prime numbers between $3n$ and $4n$ also tends to infinity - a result which is implied by the prime number theorem.

A common feature exists in each of these elementary proofs, mainly that each improvement requires checking more cases by hand/computer than the previous result.  Unfortunately, this is a condition of the method of proof used and the nature of the problem, as we shall see in Chapter 2.

The primary idea with these proofs is to consider the binomial coefficient $\binom{(k+1)n}{kn}$ and subdivide the `categories' of primes as small, medium, and large as:
\[T_1=\prod_{p\leq\sqrt{(k+1)n}}\mkern-18mu p^{\beta(p)},\quad T_2=\prod_{\sqrt{(k+1)n}<p\leq kn}\mkern-30mu p^{\beta(p)},\quad\text{and } T_3=\prod_{kn+1\leq p\leq (k+1)n}\mkern-30mu p\]
such that
\[\binom{(k+1)n}{kn}=T_1 T_2 T_3,\]
where $\beta(p)$ is the largest power of $p$, say $\alpha$, such that $p^{\alpha}$ divides $\binom{(k+1)n}{kn}$.  It should also be noted that we may readily calculate $\beta(p)$ by
\[\beta(p)=\sum_{t=1}^{\infty}\left(\left[\frac{(k+1)n}{p^t}\right]-\left[\frac{kn}{p^t}\right]-\left[\frac{n}{p^t}\right]\right),\]
a fact shown by P. Erd\H{o}s in \cite[p.~24]{erdos2003}.

We then bound $T_1$ and $T_2$ from above by easily computable approximations.  Similarly, we bound $\binom{(k+1)n}{kn}$ from below and show that
\[T_3=\binom{(k+1)n}{kn}\frac{1}{T_1 T_2}>1.\]
Since then $T_3>1$, there is at least one prime number between $kn$ and $(k+1)n$.

We will use methods similar to these in Section 2.1 to show that there is a prime number between $4n$ and $5n$ for all integers $n>2$.  However, in Section 2.2 we will not only apply these methods, but we will need to define a division of binomial coefficients in order to efficiently bound $T_2$.  We will define $B_k(n,m)$ as
\[B_k(n,m)=\left.{\frac{(k+1)n}{m}\brace \frac{kn}{m}} \middle/ {\frac{(k+1)n}{2m} \brace \frac{kn}{2m}}\right.\]
and establish both upper and lower bounds for this division.  In doing so we will be able to extend our proof technique to the case when $k=8$.

We then move from our elementary approach in favor of an analytic number theory approach that was utilized by S. Ramanujan in \cite{ramanujan1919} and J. Nagura in \cite{nagura1952}, showing that
\[\vartheta\left(\frac{(k+1)n}{k}\right)-\vartheta(n)>0\]
provided that $n$ and $k$ are integers such that $1\leq k\leq 519$ and $n\geq k$.  We will then show that this theorem yields a corollary that there is at least one prime number between $519n$ and $520n$ for all $n>15$.

We conclude Chapter 2 by showing that there exists a prime number $p$ such that $n^2<p<{(n+1)}^{2.000001}$.  This result is close to Legendre's conjecture, which asserts that there exists a prime number between $n^2$ and ${(n+1)}^2$.  Moreover, we note that there exists a prime number $p$ such that $n^2<p<(n+1)^{2+\varepsilon}$ for any $\varepsilon>0$ and some sufficiently large $n$.

In Chapter 3 we will turn our attention to determining the number of prime numbers between $n$ and $kn$ based upon our previous results.  We will also show that there are at least $k-1$ prime numbers between $n$ and $kn$ for any $k\geq 2$ and $n\geq k$.  In doing so, we will also note that the number of prime numbers between $n$ and $kn$ tends to infinity by showing that
\[\vartheta(kn)-\vartheta(n)>n\left(k-1-3.965\left(\frac{k}{\log^2(kn)}+\frac{1}{\log^2 n}\right)\right)>0,\]
where $\log^2 x=(\log x)^2$.

Lastly, we show that for every positive integer $m$ there exists a positive integer $L$ such that for all $n\geq L$ there are at least $m$ prime numbers in each of our intervals.  We then compare this to the prime number theorem which states that $\pi(x)\sim\frac{x}{\log x}$.
\chapter{Prime Numbers In Intervals\label{chapter2}}

In this chapter we will show that there is always a prime number in certain intervals for some positive integer $n$.

The elementary means employed in Sections 2.1 and 2.2 are essentially the same as those used by P. Erd\H{o}s \cite{erdos1932}, M. El Bachraoui \cite{bachraoui2006}, and A. Loo \cite{loo2011}.  The analytic means employed in Section 2.3 are the methods used by S. Ramanujan in \cite{ramanujan1919} and $\nolinebreak{\text{J. Nagura}}$ in \cite{nagura1952}.

We will extensively utilize the following bounds of the factorial (gamma) function as provided by H. Robbins \cite{robbins1955} in the next two sections and so we present them as a lemma.

\begin{Lemma}
Let $l(n)=\sqrt{2\pi}\,n^{n+\frac{1}{2}}\,e^{-n+\frac{1}{12n+1}}$ and $u(n)=\sqrt{2\pi}\,n^{n+\frac{1}{2}}\,e^{-n+\frac{1}{12n}}$, then $l(n)<n!<u(n)$ for $n\geq 1$.
\end{Lemma}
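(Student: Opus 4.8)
The plan is to follow the classical route to Stirling's formula with explicit remainder control, which is in essence Robbins' own argument. First I would set
\[
d_n = \log n! - \left(n+\tfrac12\right)\log n + n ,
\]
so that the lemma becomes equivalent to the two-sided estimate $\tfrac12\log(2\pi) + \tfrac{1}{12n+1} < d_n < \tfrac12\log(2\pi) + \tfrac{1}{12n}$ for $n\ge 1$. The computational heart of the matter is the telescoping difference
\[
d_n - d_{n+1} = \left(n+\tfrac12\right)\log\frac{n+1}{n} - 1 ,
\]
which, after writing $\log\frac{n+1}{n} = \log\frac{1+(2n+1)^{-1}}{1-(2n+1)^{-1}}$ and expanding in powers of $(2n+1)^{-1}$, equals $\sum_{k\ge 1}\frac{1}{(2k+1)\,(2n+1)^{2k}}$.

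From this series I would read off three facts. First, it is positive, so $(d_n)$ is strictly decreasing; since moreover $d_1 = 1$ and (by the next point) $d_n - \tfrac1{12n}$ is increasing, $(d_n)$ is bounded below and hence converges to a limit $B$. Second, bounding the series above by the geometric series $\tfrac13\sum_{k\ge 1}(2n+1)^{-2k} = \tfrac{1}{12n(n+1)} = \tfrac{1}{12n} - \tfrac{1}{12(n+1)}$ shows $d_n - \tfrac{1}{12n}$ is strictly increasing, hence increases to $B$, giving $d_n < B + \tfrac1{12n}$. Third, bounding the series below by its leading term $\tfrac{1}{3(2n+1)^2}$ together with the elementary inequality $\tfrac{1}{3(2n+1)^2} > \tfrac{1}{12n+1} - \tfrac{1}{12(n+1)+1}$ --- which after clearing denominators reduces to $24n > 23$, true for every $n\ge 1$ --- shows $d_n - \tfrac{1}{12n+1}$ is strictly decreasing, hence decreases to $B$, giving $d_n > B + \tfrac1{12n+1}$. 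Together these sandwich $d_n$ exactly as required.

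It then remains to identify $B = \tfrac12\log(2\pi)$, and this is the step I expect to carry the most weight. I would use the Wallis product $\tfrac{\pi}{2} = \lim_{N\to\infty}\frac{2^{4N}(N!)^4}{((2N)!)^2(2N+1)}$: the relation $d_N \to B$ gives the asymptotic $N! \sim e^{B} N^{N+1/2}e^{-N}$, and substituting it collapses the right-hand side to $e^{2B}/4$, whence $e^{2B} = 2\pi$. Exponentiating the bound on $d_n$ and inserting this value of $B$ yields $l(n) < n! < u(n)$ for all $n\ge 1$. Beyond routine bookkeeping, the only delicate points are justifying the term-by-term manipulation of the logarithmic series (absolute convergence makes this safe) and confirming that the two series comparisons hold already at $n=1$, which --- as the inequality $24n>23$ shows --- they do, so no separate small-case verification is needed.
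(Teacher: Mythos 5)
Your argument is correct, and it is essentially the proof the paper relies on: the paper states this lemma without proof, citing Robbins (1955), and your telescoping analysis of $d_n-d_{n+1}=\sum_{k\ge 1}\tfrac{1}{(2k+1)(2n+1)^{2k}}$ with the two series comparisons and the Wallis-product identification of the constant is precisely Robbins' argument. All the key inequalities check out (in particular $24n>23$ for the lower comparison and the geometric-series bound $\tfrac{1}{12n(n+1)}$ for the upper one), so nothing further is needed.
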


Moreover, we will also use the fact that $1\leq\delta(r,s)\leq s$ as shown in the following lemma.

\begin{Lemma}
Let $r$ and $s$ be real numbers satisfying $s>r\geq 1$ and ${s \brace r}=\delta(r,s)\binom{[s]}{[r]}$, then $1\leq \delta(r,s)\leq s$.
\end{Lemma}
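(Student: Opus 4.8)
The plan is to simply unwind the definition of $\delta(r,s)$ recorded in the list of notation and argue case by case. By definition, $\delta(r,s)=1$ when $\{s\}\geq\{r\}$ and $\delta(r,s)=[s-r]+1$ when $\{s\}<\{r\}$; note also that the hypothesis $s>r\geq 1$ forces $s>1$, a fact I will use to dispose of the first case.

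In the case $\{s\}\geq\{r\}$ there is nothing to compute: $\delta(r,s)=1$, and $1\leq 1\leq s$ because $s>1$. This yields both desired inequalities at once.

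In the case $\{s\}<\{r\}$ I would argue as follows. For the lower bound, $s>r$ gives $s-r>0$, so $[s-r]\geq 0$ and hence $\delta(r,s)=[s-r]+1\geq 1$. For the upper bound I would combine $[s-r]\leq s-r$ with the hypothesis $r\geq 1$ to get $[s-r]\leq s-r\leq s-1$, so that $\delta(r,s)=[s-r]+1\leq s$. Putting the two cases together proves $1\leq\delta(r,s)\leq s$.

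There is no real obstacle here; the only points worth flagging are that it is the hypothesis $r\geq 1$ (and not merely $r>0$) that delivers the upper bound, and that $s>1$ — itself a consequence of $s>r\geq 1$ — is exactly what is needed in the case $\delta(r,s)=1$. As a remark that will be convenient when $\delta$ is invoked later, a short bookkeeping with fractional parts shows that in the second case $[s-r]=[s]-[r]-1$, so that $\delta(r,s)=[s]-[r]$; from this, the bounds $1\leq [s]-[r]$ and $[s]-[r]\leq s-1$ (using $[s]\leq s$ and $[r]\geq 1$) are equally immediate.
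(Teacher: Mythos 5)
Your proof is correct and follows essentially the same route as the paper: a case split on whether $\{s\}\geq\{r\}$ or $\{s\}<\{r\}$ using the definition of $\delta(r,s)$. The paper simply asserts ``in either case, $1\leq\delta(r,s)\leq s$'' after setting up that definition, so your write-up merely supplies the short verification (including the observation that $r\geq 1$ is what gives the upper bound) that the paper leaves implicit.
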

\begin{proof}
Let $z>0$ and let $[z]$ be the greatest integer less than or equal to $z$.  Define $\{z\}=z-[z]$.  Let $r$ and $s$ be real numbers satisfying $s>r\geq1$.  Observe that the number of integers in the interval $(s-r,s]$ is $[s]-[s-r]$, which is $[r]$ if $\{s\}\geq\{r\}$ and $[r]+1$ if $\{s\}<\{r\}$.  Let $\mathbb{N}$ be the set of all natural numbers and define
\[{s \brace r}=\frac{\displaystyle\prod_{k\in(s-r,s]\cap\mathbb{N}}\mkern-24mu k}{\displaystyle\prod_{k\in(0,r]\cap\mathbb{N}}\mkern-16mu k}=\delta(r,s)\binom{[s]}{[r]}\]
\noindent where $\delta(r,s)=1$ if $\{s\}\geq\{r\}$ and $\delta(r,s)=[s-r]+1$ if $\{s\}<\{r\}$.  In either case, $1\leq \delta(r,s)\leq s$.
\end{proof}

\begin{Lemma}
The following are true:
\begin{enumerate}[font=\normalfont]
\item Let $c\geq\frac{1}{12}$ be a fixed constant.  For $x\geq\frac{1}{2}$, $\frac{u(x+c)}{l(c)l(x)}$ is increasing.
\item Let $c$ be a fixed positive constant and define $h_2(x)=\frac{u(c)}{l(x)l(c-x)}$.  Then $h'_2(x)>0$ when $\frac{1}{2}\leq x\leq \frac{c}{2}$, $h'_2(x)=0$ when $x=\frac{c}{2}$, and $h'_2(x)<0$ when $\frac{c}{2}<x\leq c-\frac{1}{2}$.
\end{enumerate}
\end{Lemma}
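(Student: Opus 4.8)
The plan is to reduce each part to a single logarithmic-derivative computation. Since $u$ and $l$ are strictly positive on their domains, both quotients are positive; hence the quotient in~(1) is increasing precisely when its logarithm has positive derivative, and in~(2) the sign of $h_2'$ equals the sign of $(\log h_2)'$. So in each case I would take logarithms, discard the additive constants coming from $\tfrac12\log(2\pi)$ and from $\log l(c)$ or $\log u(c)$, and differentiate term by term, using in particular $\frac{d}{dx}\frac{1}{12x+1}=-\frac{12}{(12x+1)^2}$ and $\frac{d}{dx}\frac{1}{12(x+c)}=-\frac{1}{12(x+c)^2}$, together with the chain rule on the arguments $x+c$ and $c-x$.

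For part~(1), set $f(x)=\log\dfrac{u(x+c)}{l(c)\,l(x)}$. Substituting the definitions and simplifying $\frac{x+c+1/2}{x+c}=1+\frac{1}{2(x+c)}$ and $\frac{x+1/2}{x}=1+\frac{1}{2x}$, the computation should collapse to
\[
f'(x)=\log\frac{x+c}{x}-\frac{c}{2x(x+c)}+\left(\frac{12}{(12x+1)^2}-\frac{1}{12(x+c)^2}\right).
\]
I would then bound the two groups separately. For the first two terms, the elementary inequality $\log(1+t)>\frac{t}{1+t}$ $(t>0)$ with $t=c/x$ gives $\log\frac{x+c}{x}>\frac{c}{x+c}$, and $x\ge\frac12$ gives $\frac{c}{x+c}\ge\frac{c}{2x(x+c)}$, so their sum is strictly positive. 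For the parenthesised group, $c\ge\frac1{12}$ gives $12(x+c)\ge 12x+1>0$, hence $\frac{1}{12(x+c)^2}=\frac{12}{\left(12(x+c)\right)^2}\le\frac{12}{(12x+1)^2}$, so that group is $\ge0$. Adding, $f'(x)>0$, which is the assertion.

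For part~(2), set $g(x)=\log h_2(x)$. The same bookkeeping --- now with a factor $-1$ from the chain rule on the $c-x$ terms --- should yield the identity
\[
g'(x)=G(c-x)-G(x),\qquad G(t)=\log t+\frac{1}{2t}-\frac{12}{(12t+1)^2}.
\]
This collapses the whole statement to one fact: $G$ is strictly increasing on $[\tfrac12,\infty)$. Indeed
\[
G'(t)=\frac1t-\frac1{2t^2}+\frac{288}{(12t+1)^3}=\frac{2t-1}{2t^2}+\frac{288}{(12t+1)^3}>0\qquad(t\ge\tfrac12).
\]
Then: if $\frac12\le x<\frac c2$ then $c-x>x\ge\frac12$, so $g'(x)=G(c-x)-G(x)>0$; if $x=\frac c2$ then $g'(x)=G(\tfrac c2)-G(\tfrac c2)=0$; and if $\frac c2<x\le c-\frac12$ then $x>c-x\ge\frac12$, so $g'(x)<0$. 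Since $h_2>0$ we have $h_2'=h_2\,g'$, so $h_2'$ has the same sign as $g'$, proving all three claims.

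I do not anticipate a real obstacle. The only genuine ideas are the choice of groupings in~(1) --- arranged so that $\log(1+t)>\frac{t}{1+t}$, the hypothesis $x\ge\frac12$, and the hypothesis $c\ge\frac1{12}$ each pull their weight --- and the observation in~(2) that $g'(x)=G(c-x)-G(x)$, which reduces a two-sided statement to one monotonicity lemma. The likeliest source of error is the fussy differentiation, especially the chain-rule signs for the $c-x$ arguments and the derivatives of the $\frac{1}{12x+1}$- and $\frac{1}{12(x+c)}$-type correction terms; I would verify those carefully before trusting the displayed formulas for $f'$ and $g'$.
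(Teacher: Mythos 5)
Your proof is correct: the formula for $f'(x)$ and the identity $g'(x)=G(c-x)-G(x)$ both check out, the three inequalities $\log(1+t)>\tfrac{t}{1+t}$, $x\ge\tfrac12$, and $12(x+c)\ge 12x+1$ (from $c\ge\tfrac1{12}$) do exactly the work you assign them in part (1), and the reduction of part (2) to the monotonicity of $G$ on $[\tfrac12,\infty)$ is clean and complete. Note that the paper does not actually prove this lemma---it only cites Lemmas 1.4 and 1.5 of Loo (2011)---and your logarithmic-differentiation argument is essentially the standard one used there, so you have in effect supplied the details the paper omits.
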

\begin{proof}
See Lemmas 1.4 and 1.5 of \cite{loo2011}.
\end{proof}

\section{Primes in the Interval [4$n$, 5$n$]}
\label{sec:Primes4n5n}

In this section we will show that there is always a prime number in the interval $(4n, 5n)$ for any positive integer $n>2$.  In order to do so, we will begin by showing two inequalities which will be vital to the main proof which follows.

For our main proof we will consider the binomial coefficient $\binom{5n}{4n}$ and subdivide the prime numbers composing $\binom{5n}{4n}$ into three products based on their size.  That is, we will categorize the prime numbers as
\[T_{1}=\prod_{p\leq \sqrt{5n}}\mkern-6mu p^{\beta(p)},\quad T_{2}=\prod_{\sqrt{5n} < p\leq 4n}\mkern-18mu p^{\beta(p)},\quad\text{and } T_{3}=\prod_{4n+1\leq p\leq 5n}\mkern-20mu p\]
so that
\[\binom{5n}{4n}=T_1 T_2 T_3.\]

We will then show that $T_3=\binom{5n}{4n}\frac{1}{T_1 T_2}$ is greater than 1 and therefore there is at least one prime number in $T_3$.  That is, there is at least one prime number in the interval $(4n, 5n)$.

\begin{Lemma}
The following inequalities hold:
\begin{enumerate}[font=\normalfont]
\item For $n\geq6818$, $e^{\sfrac{1}{(60n+1)}-\sfrac{1}{48n}-\sfrac{1}{12n}}\geq0.999986$.
\item For $n\geq1$, $e^{\sfrac{1}{30n}-\sfrac{1}{(24n+1)}-\sfrac{1}{(6n+1)}}\leq1$.
\item For $n\geq1$, $e^{\sfrac{1}{20n}-\sfrac{1}{(16n+1)}-\sfrac{1}{(4n+1)}}\leq1$.
\item For $n\geq6815$, $\frac{4n+3}{n-3}<4.002202$.
\end{enumerate}
\end{Lemma}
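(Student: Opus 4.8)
The plan is to note that all four parts are elementary estimates: after exponentiating (parts 1--3) or clearing a denominator (part 4), each reduces to an inequality between rational functions of $n$. I would dispose of parts (2), (3), and (4) first, as these need no calculus, and then turn to part (1), which requires only a one-line lower bound on the exponent.

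For (2) and (3), since $t\mapsto e^t$ is increasing it suffices to show the exponent is negative. In (2), for $n\geq 1$ we have $6n+1\leq 30n$ (equivalently $24n\geq 1$), so $\frac{1}{6n+1}\geq\frac{1}{30n}$ and therefore $\frac{1}{30n}-\frac{1}{24n+1}-\frac{1}{6n+1}\leq-\frac{1}{24n+1}<0$, whence $e^{(\cdot)}<1$. Part (3) is the same argument using $4n+1\leq 20n$ for $n\geq 1$, which gives exponent $\leq-\frac{1}{16n+1}<0$. For (4), since $n-3>0$ when $n\geq 6815$ the claim is equivalent, after multiplying by $n-3$, to $4n+3<4.002202(n-3)$, i.e.\ to the linear inequality $15.006606<0.002202\,n$; the left side is constant and $0.002202\cdot 6815=15.00663>15.006606$, so it holds for every $n\geq 6815$.

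For (1), write the exponent as $f(n)=\frac{1}{60n+1}-\frac{1}{48n}-\frac{1}{12n}$. Combining fractions gives $f(n)=-\frac{252n+5}{48n(60n+1)}$, which is negative for all $n>0$, so a lower bound is what is needed. For $n\geq 6818$ we have $252n+5<253n$ and $48n(60n+1)>2880n^2$, hence $f(n)>-\frac{253}{2880n}\geq-\frac{253}{2880\cdot 6818}>-1.29\times 10^{-5}$. Using $e^t>1+t$ then yields $e^{f(n)}>1+f(n)>1-1.29\times 10^{-5}=0.9999871>0.999986$. Alternatively, one checks that $f'(n)>0$ for $n\geq 6818$ — the numerator of the derivative of $\frac{252n+5}{48n(60n+1)}$ simplifies to $-725760n^2-28800n-240<0$ — so $f$ is increasing on $[6818,\infty)$, its minimum there is $f(6818)\approx-1.283\times 10^{-5}$, and $e^{f(6818)}>0.999986$ by a direct evaluation.

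None of the steps are difficult; the only point demanding care is the constant-chasing in (1) and (4), where $6818$, $0.999986$, $6815$, and $4.002202$ are evidently tuned to be just barely sufficient. In particular, in (1) the naive bound $f(n)\geq-\frac{5}{48n}$ already fails ($e^{-5/(48\cdot 6818)}\approx 0.9999847<0.999986$), so the positive term $\frac{1}{60n+1}$ must be kept in the estimate; and in (4) the margin at $n=6815$ is only about $0.000024$, so the rounding must be handled carefully.
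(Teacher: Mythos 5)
Your proposal is correct and follows essentially the same route as the paper: each part is reduced to an equivalent rational (or linear) inequality, monotonicity in $n$ is invoked, and the boundary case is checked numerically. The only cosmetic difference is in part (1), where you avoid evaluating $\log 0.999986$ by combining the bound $f(n)>-\tfrac{253}{2880n}$ with $e^t>1+t$, whereas the paper takes logarithms and verifies $\tfrac{252n+5}{2880n^2+48n}<-\log 0.999986$ at $n=6818$ — your stated ``alternative'' is exactly the paper's argument.
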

\textit{Proof} (1).  The following inequalities are equivalent for $n\geq6818$:
\begin{align*}
e^{\sfrac{1}{(60n+1)}-\sfrac{1}{48n}-\sfrac{1}{12n}}&\geq0.999986\\
\tfrac{1}{60n+1}-\tfrac{1}{48n}-\tfrac{1}{12n}&\geq\log 0.999986\\
\tfrac{252n+5}{2880n^{2}+48n}&\leq-\log 0.999986\approx0.000014.
\end{align*}
Now the left-hand side is decreasing in $n$ and so it suffices to verify the case when $n=6818$.  When $n=6818$, we obtain
\[\frac{1718141}{133877484384}<0.000013<-\log 0.999986.\]

\noindent\textit{Proof} (2).  The following inequalities are equivalent for $n\geq1$:
\begin{align*}
e^{\sfrac{1}{30n}-\sfrac{1}{(24n+1)}-\sfrac{1}{(6n+1)}}&\leq1\\
\tfrac{1}{30n}-\tfrac{1}{24n+1}-\tfrac{1}{6n+1}&\leq0\\
1&\leq756n^{2}+30n.
\end{align*}
Which clearly holds for all $n\geq1$.

\noindent\textit{Proof} (3).  The following inequalities are equivalent for $n\geq1$:
\begin{align*}
e^{\sfrac{1}{20n}-\sfrac{1}{(16n+1)}-\sfrac{1}{(4n+1)}}&\leq1\\
\tfrac{1}{20n}-\tfrac{1}{16n+1}-\tfrac{1}{4n+1}&\leq0\\
1&\leq336n^{2}+20n.
\end{align*}
Which clearly holds for all $n\geq1$.

\noindent\textit{Proof} (4).  The inequality follows directly by noting that $\frac{4n+3}{n-3}<4.002202$ is equivalent to $0<0.002202n-15.006606$ which clearly holds for all $n\geq6815$. $\hfill\qed$

\begin{Lemma}
For all $n\geq 6818$ the following inequality holds:
\[\frac{0.054886}{2^{\frac{n}{2}}n^{\frac{3}{2}}}{\left(\frac{3125}{256}\right)}^{\frac{n}{6}}>{(5n)}^{\frac{2.51012\sqrt{5n}}{\log(5n)}}.\]
\end{Lemma}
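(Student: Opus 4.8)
The plan is to pass to logarithms, where the ungainly exponent on the right collapses and the inequality becomes a statement about a single elementary function of $n$. Write $L(n)$ and $R(n)$ for the left- and right-hand sides. Since the logarithm here is natural,
\[\log R(n) = \frac{2.51012\sqrt{5n}}{\log(5n)}\,\log(5n) = 2.51012\sqrt{5n},\]
and, using $3125 = 5^5$ and $256 = 2^8$,
\[\log L(n) = \log(0.054886) - \frac{n}{2}\log 2 - \frac{3}{2}\log n + \frac{n}{6}\bigl(5\log 5 - 8\log 2\bigr).\]
So the lemma is equivalent to $f(n) > 0$ for all $n \ge 6818$, where
\[f(n) := \log(0.054886) + c\,n - \frac{3}{2}\log n - 2.51012\sqrt{5n}, \qquad c := \frac{5}{6}\log 5 - \frac{11}{6}\log 2.\]
A short estimate gives $c > 0.07 > 0$, so $f(n)$ grows linearly while the subtracted term $2.51012\sqrt{5n}$ grows only like $\sqrt n$; hence $f(n) \to +\infty$, and the real content is that $f$ has already turned positive by $n = 6818$.

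To make this precise I would first show $f$ is increasing on $[6818,\infty)$. Differentiating,
\[f'(n) = c - \frac{3}{2n} - \frac{2.51012\sqrt 5}{2\sqrt n}.\]
Both subtracted terms are positive and strictly decreasing in $n$, so $f'$ is increasing; it therefore suffices to check $f'(6818) > 0$, i.e.\ $c > \frac{3}{2\cdot 6818} + \frac{2.51012\sqrt 5}{2\sqrt{6818}}$, whose right side is about $0.034$ against $c \approx 0.070$. With $f' > 0$ on $[6818,\infty)$ established, $f$ is increasing there, so the lemma reduces to the single inequality $f(6818) > 0$.

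It remains to verify
\[f(6818) = \log(0.054886) + 6818\,c - \frac{3}{2}\log 6818 - 2.51012\sqrt{34090} > 0.\]
Numerically the four terms are roughly $-2.90$, $+480.18$, $-13.24$, and $-463.46$, summing to about $+0.58$, so the inequality holds --- but with only a slim margin. This is the one delicate point: because the margin in the logarithm is under one unit, each term must be enclosed by honest rational bounds carried to enough decimal places (a lower bound for $c$, hence for $6818\,c$, and upper bounds for $\log 6818$ and $\sqrt{34090}$) to be certain the sum is strictly positive. That careful numerical check at $n = 6818$ --- together with the analogous one for $f'(6818)$ --- is the main obstacle; everything else is a routine monotonicity argument.
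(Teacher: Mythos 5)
Your proposal is correct and follows essentially the same route as the paper: take logarithms (so the right side collapses to $2.51012\sqrt{5n}$), isolate the linear term with coefficient $c=\tfrac{1}{6}(5\log 5-11\log 2)$, establish monotonicity, and reduce everything to a single numerical check at $n=6818$. The only cosmetic difference is that the paper divides through by $n$ so that the competing terms are each visibly decreasing, whereas you argue via the sign of $f'$; both reduce to the same tight boundary inequality at $n=6818$.
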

\begin{proof}The following are equivalent for $n\geq6818$:
\[\frac{0.054886}{2^{\frac{n}{2}}n^{\frac{3}{2}}}{\left(\frac{3125}{256}\right)}^{\frac{n}{6}}>{(5n)}^{\frac{2.51012\sqrt{5n}}{\log(5n)}}\]
\[\log0.054886-\frac{n}{2}\log2-\frac{3}{2}\log n+\frac{n}{6}\log3125-\frac{n}{6}\log256>2.51012\sqrt{5n}\]
\[\frac{n}{6}{\left[\log3125-3\log2-\log256\right]}>2.51012\sqrt{5n}+\frac{3}{2}\log n-\log0.054886\]
\[\frac{1}{6}\left[5\log5-11\log2\right]>\frac{2.51012\sqrt{5}}{\sqrt{n}}+\frac{3}{2}\left(\frac{\log n}{n}\right)-\frac{\log 0.054886}{n}.\]

Now the right-hand side is decreasing in $n$ and so it suffices to verify the case when $n=6818$.  When $n=6818$, we obtain
\[\frac{1}{6}\left[5\log5-11\log2\right]>0.0742>\frac{2.51012\sqrt{5}}{\sqrt{6818}}+\frac{3}{2}\left(\frac{\log6818}{6818}\right)-\frac{\log0.054886}{6818}.\qedhere\]
\end{proof}

We now proceed with the proof of our main theorem for this section; that is, there is always a prime number between $4n$ and $5n$ for all integers $n>2$.

\vspace{10pt}
\begin{Theorem}
For any positive integer $n>2$ there is a prime number between $4n$ and $5n$.
\end{Theorem}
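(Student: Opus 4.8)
The strategy is the one sketched in the introduction: work with the binomial coefficient $\binom{5n}{4n}$, split it as $T_1 T_2 T_3$ according to prime size, bound $T_1$ and $T_2$ from above, bound $\binom{5n}{4n}$ from below, and deduce that $T_3 > 1$ for $n \geq 6818$; then dispatch the finitely many cases $3 \leq n \leq 6817$ by exhibiting, for each, a known prime in $(4n,5n)$ (in practice via a short list of primes whose consecutive ratios are all below $5/4$, so that one of them lands in every such interval).

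\textbf{Step 1: Lower bound for $\binom{5n}{4n}$.} Write $\binom{5n}{4n} = \frac{(5n)!}{(4n)!\,n!}$ and apply Robbins' bounds (Lemma with $l,u$): bound $(5n)!$ below by $l(5n)$ and bound $(4n)!$, $n!$ above by $u(4n)$, $u(n)$. The algebraic powers of $n$ and the $\sqrt{2\pi}$ factors combine to give something of the shape $\frac{c}{2^{n/2} n^{3/2}} \left(\frac{3125}{256}\right)^{n/6}$ times an exponential factor $e^{1/(60n+1) - 1/(48n) - 1/(12n)}$; parts (1) and (2)/(3)-style estimates of the preceding Lemma are exactly what controls that exponential factor and the constant, so this step is mechanical once the bookkeeping with $(5n)^{5n+1/2}$, $(4n)^{4n+1/2}$, $n^{n+1/2}$ is done and the $5^{5n}/(4^{4n})$ piece is rewritten as a sixth power.

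\textbf{Step 2: Upper bounds for $T_1$ and $T_2$.} For $T_1 = \prod_{p \leq \sqrt{5n}} p^{\beta(p)}$, use that $p^{\beta(p)} \leq 5n$ (since $\beta(p) \le \log(5n)/\log p$, a standard consequence of the Legendre-type formula for $\beta(p)$ quoted in the introduction), so $T_1 \leq (5n)^{\pi(\sqrt{5n})} \leq (5n)^{\sqrt{5n}}$. For $T_2 = \prod_{\sqrt{5n} < p \leq 4n} p^{\beta(p)}$, show that each such $p$ has $\beta(p) \leq 1$: from $\beta(p) = \sum_t \left( \left[\tfrac{5n}{p^t}\right] - \left[\tfrac{4n}{p^t}\right] - \left[\tfrac{n}{p^t}\right] \right)$, only $t=1$ contributes when $p > \sqrt{5n}$, and each summand is $0$ or $1$; moreover primes in $(\tfrac{5n}{3}, 2n]$ and similar "forbidden" subranges contribute $0$, which trims $T_2$ down so that $\log T_2 \leq \vartheta(4n) - \vartheta(\tfrac{5n}{3}) + \cdots$, and then one invokes a Chebyshev-type upper bound $\vartheta(x) < 2.51012\,x$ (or whatever explicit constant the paper has available) to get $T_2 \leq \exp(c \cdot 4n)$ for a suitable $c$. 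The exact partition of $(\sqrt{5n},4n]$ into ranges where $\beta(p)=0$ versus $\beta(p) \le 1$ is where care is needed, but it parallels Erdős/Loo exactly.

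\textbf{Step 3: Combine.} Then $T_3 = \binom{5n}{4n}/(T_1 T_2) \geq \frac{(\text{Step 1 lower bound})}{(5n)^{\sqrt{5n}} \cdot \exp(c\cdot 4n)}$. After absorbing the $\exp(c\cdot 4n)$ into the $\left(\frac{3125}{256}\right)^{n/6}$ factor (this is why the constant $2.51012$ must be small enough that $\frac{1}{6}\log\frac{3125}{256} > \frac{4}{3}c$, i.e. roughly $c < 0.055$), the inequality $T_3 > 1$ reduces precisely to the estimate of the Lemma immediately preceding the theorem, namely $\frac{0.054886}{2^{n/2} n^{3/2}} \left(\frac{3125}{256}\right)^{n/6} > (5n)^{2.51012\sqrt{5n}/\log(5n)}$, valid for $n \geq 6818$. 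Hence $T_3 > 1$ and there is a prime in $(4n,5n)$ for all $n \geq 6818$.

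\textbf{Step 4: Small cases.} For $3 \leq n \leq 6817$, produce an explicit increasing list of primes $q_0 < q_1 < \cdots < q_N$ with $q_0 \leq 12$, $q_N \geq 5 \cdot 6817$, and $q_{i+1} < \tfrac{5}{4} q_i$ for every $i$; then for each $n$ in range, the largest $q_i \leq 5n$ satisfies $q_i > 4n$ (otherwise $q_{i+1} < \tfrac54 q_i \le 5n$ would be a larger element $\le 5n$), giving the required prime. Part (4) of the earlier Lemma ($\frac{4n+3}{n-3} < 4.002202$) is the tool for verifying the ratio condition near the upper end of the check, and $n>2$ is needed so that $4n < 5n$ leaves room (the interval is genuinely open and nonempty).

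\textbf{Main obstacle.} The genuinely delicate part is Step 2: getting the upper bound on $T_2$ tight enough. A crude bound $\vartheta(4n) \le c\cdot 4n$ with too large a $c$ would make the exponential in Step 3 overwhelm the $\left(\frac{3125}{256}\right)^{n/6}$ gain, and the whole argument collapses; one must both (a) use an explicit Chebyshev bound with a small enough constant and (b) exploit the vanishing of $\beta(p)$ on the sub-interval(s) of $(\sqrt{5n},4n]$ where $5n, 4n, n$ have "aligned" fractional parts, exactly as in Loo's treatment of $(3n,4n)$. Everything else is careful but routine calculus and arithmetic, already pre-packaged in the four-part Lemma and the single-inequality Lemma stated just above the theorem.
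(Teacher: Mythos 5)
Your outline reproduces the paper's architecture (split $\binom{5n}{4n}=T_1T_2T_3$, Robbins bounds from below, $T_1\le(5n)^{2.51012\sqrt{5n}/\log(5n)}$ via Rosser--Schoenfeld, conclude $T_3>1$ for $n\ge 6818$, check small cases), but the load-bearing step --- the upper bound on $T_2$ --- is where there is a genuine gap. You propose to bound the surviving part of $T_2$ by differences of $\vartheta$ and then invoke ``a Chebyshev-type upper bound $\vartheta(x)<cx$'' with $c$ small enough. No usefully small $c$ exists: since $\vartheta(x)\sim x$, any valid constant on the whole range is at least about $1$, and a bound of the shape $T_2\le e^{4cn}$ with $c\approx 1$ gives $T_2\le e^{4n}$, which swamps $\binom{5n}{4n}\approx(3125/256)^n=e^{2.502\ldots n}$ and makes the argument collapse --- exactly the failure mode you flag but do not avoid. (Your arithmetic around this is also confused: $2.51012$ is $2\times1.25506$ from the bound on $\pi$, used only for $T_1$, not a $\vartheta$ constant, and the stated threshold $c<0.055$ does not follow from the inequality you wrote.) What actually closes this step in the paper is a lengthy partition of $(\sqrt{5n},4n]$ into about nineteen subintervals, showing that on most of them $\beta(p)=0$ and that every surviving prime either divides one of the two floored binomial coefficients $A={\frac{5n}{2}\brace 2n}$ or $B={\frac{5n}{3}\brace\frac{4n}{3}}$ or lies below $n/4$, where $\prod_{p\le n/4}p<4^{n/4}=2^{n/2}$; hence $T_2<2^{n/2}AB$, and $A,B$ are then bounded above by Robbins' formula, producing the constants $1.576958$ and $5.153158$ from which the $0.054886$ of Lemma 2.1.2 is built. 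Your claim that the final inequality ``reduces precisely'' to that lemma therefore presupposes the $A,B$ construction you never supply; from a $\vartheta$-based bound it would not reduce to that lemma at all. (A $\vartheta$-based version could be made to work, as the paper itself does for $k=8$ in Lemma 2.2.4, but it needs the same case analysis plus both upper and lower explicit $\vartheta$ bounds with constants near $1$, and separate treatment of the small primes where those bounds are unavailable.)

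Two smaller corrections: Lemma 2.1.1(4), the bound $\frac{4n+3}{n-3}<4.002202$, is used inside the upper bound for $B$, not in verifying the small cases as you suggest; and the intermediate weakening $T_1\le(5n)^{\sqrt{5n}}$ is both unnecessary and harmful, since it discards the $1/\log(5n)$ in the exponent that Lemma 2.1.2 needs. Your Step 4 (a ladder of primes with consecutive ratios below $5/4$) is a perfectly sound way to dispatch $3\le n\le 6817$, which the paper simply asserts as directly verifiable.
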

\begin{proof}It can be easily verified that for $n=3, 4, \ldots, 6817$ there is always a prime number between $4n$ and $5n$.  Now let $n\geq6818$ and consider:
\[\binom{5n}{4n}=\frac{(4n+1)(4n+2)\cdots(5n)}{1\cdot2\cdots n}.\]

The product of primes between $4n$ and $5n$, if there are any, divides $\binom{5n}{4n}$.

Following the notation used in \cite{bachraoui2006,erdos1932,loo2011}, we let
\[T_{1}=\prod_{p\leq \sqrt{5n}}\mkern-6mu p^{\beta(p)},\quad T_{2}=\prod_{\sqrt{5n} < p\leq 4n}\mkern-18mu p^{\beta(p)},\quad \text{ and } T_{3}=\prod_{4n+1\leq p\leq 5n}\mkern-20mu p\]
such that
\[\binom{5n}{4n}=T_{1}T_{2}T_{3}.\]

The prime decomposition of $\binom{5n}{4n}$ implies that the powers in $T_2$ are less than 2; see \cite[p.~24]{erdos2003} for the prime decomposition of $\binom{n}{j}$.  In addition, the prime decomposition of $\binom{5n}{4n}$, see \cite[p.~24]{erdos2003}, yields the upper bound for $T_{1}$:
\[T_{1}<{(5n)}^{\pi(\sqrt{5n})}.\]

But $\pi(x)\leq\frac{1.25506x}{\log x}$, see \cite{rosser1962}.  So we obtain

\[T_{1}<{(5n)}^{\pi(\sqrt{5n})}\leq{(5n)}^{\frac{2.51012\sqrt{5n}}{\log(5n)}}.\]

\noindent Now let $A={5n/2 \brace 2n}$ and $B={5n/3 \brace 4n/3}$ and observe the following for a prime number $p$ in $T_2$:

\vspace{12pt}
\noindent$\circ$ If $\frac{5n}{2}<p\leq 4n$, then
\[n<p\leq 4n<5n<2p.\]
\noindent Hence $\beta(p)=0$.

\vspace{12pt}
\noindent$\circ$ Clearly $\displaystyle\prod_{2n<p\leq \frac{5n}{2}}\mkern-12mu p$ divides $A$.

\newpage

\noindent$\circ$ If $\frac{5n}{3}<p\leq2n$, then
\[n<p<2p\leq4n<5n<3p.\]
\noindent Hence $\beta(p)=0$.

\vspace{16pt}
\noindent$\circ$ Clearly $\displaystyle\prod_{\frac{4n}{3}<p\leq \frac{5n}{3}}\mkern-12mu p$ divides $B$.

\vspace{16pt}
\noindent$\circ$ If $\frac{5n}{4}<p\leq\frac{4n}{3}$, then
\[n<p<3p\leq4n<5n<4p.\]
\noindent Hence $\beta(p)=0$.

\vspace{16pt}
\noindent$\circ$ If $n<p\leq\frac{5n}{4}$, then
\[\frac{n}{2}<p<2n<2p\leq\frac{5n}{2}<3p.\]
\noindent Hence $\displaystyle\prod_{n<p\leq \frac{5n}{4}}\mkern-8mu p$ divides $A$.

\vspace{16pt}
\noindent$\circ$ If $\frac{5n}{6}<p\leq n$, then
\[p\leq n<2p<4p\leq 4n<5p\leq 5n<6p.\]
\noindent Hence $\beta(p)=0$.

\vspace{16pt}
\noindent$\circ$ If $\frac{2n}{3}<p\leq\frac{5n}{6}$, then
\[\frac{n}{3}<p<\frac{4n}{3}<2p\leq\frac{5n}{3}<3p.\]
\noindent Hence $\displaystyle\prod_{\frac{2n}{3}<p\leq \frac{5n}{6}}\mkern-12mu p$ divides $B$.

\newpage

\noindent$\circ$ If $\frac{5n}{8}<p\leq\frac{2n}{3}$, then
\[p<n<2p<6p\leq 4n<7p<5n<8p.\]
\noindent Hence $\beta(p)=0$.

\noindent$\circ$ If $\frac{n}{2}<p\leq\frac{5n}{8}$, then
\[\frac{n}{2}<p<3p<2n<4p\leq\frac{5n}{2}<5p.\]
\noindent Hence $\displaystyle\prod_{\frac{n}{2}<p\leq \frac{5n}{8}}\mkern-8mu p$ divides $A$.

\noindent$\circ$ If $\frac{5n}{11}<p\leq\frac{n}{2}$, then
\[p<2p\leq n<3p<8p\leq 4n<9p<10p\leq 5n<11p.\]
\noindent Hence $\beta(p)=0$.

\noindent$\circ$ If $\frac{4n}{9}<p\leq\frac{5n}{11}$, then
\[\frac{n}{3}<p<2p<\frac{4n}{3}<3p<\frac{5n}{3}<4p.\]
\noindent Hence $\displaystyle\prod_{\frac{4n}{9}<p\leq \frac{5n}{11}}\mkern-12mu p$ divides $B$.

\noindent$\circ$ If $\frac{5n}{12}<p\leq\frac{4n}{9}$, then
\[p<2p<n<3p<9p\leq 4n<10p<11p<5n<12p.\]
\noindent Hence $\beta(p)=0$.

\noindent$\circ$ If $\frac{n}{3}<p\leq\frac{5n}{12}$, then
\[\frac{n}{3}<p<3p<\frac{4n}{3}<4p\leq\frac{5n}{3}<5p.\]
\noindent Hence $\displaystyle\prod_{\frac{n}{3}<p\leq \frac{5n}{12}}\mkern-8mu p$ divides $B$.

\noindent$\circ$ If $\frac{5n}{16}<p\leq\frac{n}{3}$, then
\[p<2p<3p\leq n<4p<12p\leq 4n<13p<14p<15p\leq 5n<16p.\]
\noindent Hence $\beta(p)=0$.

\noindent$\circ$ If $\frac{2n}{7}<p\leq\frac{5n}{16}$, then
\[p<\frac{n}{2}<2p<6p<2n<7p<8p\leq\frac{5n}{2}<9p.\]
\noindent Hence $\displaystyle\prod_{\frac{2n}{7}<p\leq \frac{5n}{16}}\mkern-12mu p$ divides $A$.

\noindent$\circ$ If $\frac{5n}{18}<p\leq\frac{2n}{7}$, then
\[p<2p<3p<n<4p<14p\leq 4n<15p<16p<17p<5n<18p.\]
\noindent Hence $\beta(p)=0$.

\noindent$\circ$ If $\frac{n}{4}<p\leq\frac{5n}{18}$, then
\[p<\frac{n}{2}<2p<7p<2n<8p<9p\leq\frac{5n}{2}<10p.\]
\noindent Hence $\displaystyle\prod_{\frac{n}{4}<p\leq \frac{5n}{18}}\mkern-8mu p$ divides $A$.

By the fact that $\displaystyle\prod_{p\leq x}p<4^x$, see \cite[p.~167]{erdos2003}, we obtain\\
\noindent $\circ$  $\displaystyle\prod_{\sqrt{5n}<p\leq \frac{n}{4}}\mkern-12mu p<4^{\frac{n}{4}}=2^{\frac{n}{2}}.$

Now, to summarize, we obtain
\[T_{2}=\displaystyle\prod_{\sqrt{5n} < p\leq 4n}\mkern-14mu p^{\beta(p)}<2^{\frac{n}{2}}AB.\]

By Lemmas 2.0.1 and 2.1.1, we obtain
\begin{align*}
\binom{5n}{4n}&=\frac{(5n)!}{(4n)!(n!)}\\
&>\frac{l(5n)}{u(4n)u(n)}\\
&=\sqrt{\frac{5}{8\pi n}}{\left(\frac{3125}{256}\right)}^{n}e^{\frac{1}{60n+1}-\frac{1}{48n}-\frac{1}{12n}}\\
&>0.446024n^{-\frac{1}{2}}{\left(\frac{3125}{256}\right)}^{n}.
\end{align*}

Similarly, by Lemmas 2.0.1, 2.0.2, 2.0.3, and 2.1.1, we obtain
\begin{align*}
A&={\frac{5n}{2} \brace 2n}=\delta(2n,5n/2)\binom{[\frac{5n}{2}]}{2n}\\
&\leq\frac{5n}{2}\binom{[\frac{5n}{2}]}{2n}\leq\frac{5n}{2}\cdot\frac{u([\frac{5n}{2}])}{l(2n)l([\frac{5n}{2}]-2n)}\\
&\leq\frac{5n}{2}\cdot\frac{u(\frac{5n}{2})}{l(2n)l(\frac{n}{2})}\\
&=\frac{5}{4}\sqrt{\frac{5n}{\pi}}{\left(\frac{3125}{256}\right)}^{\frac{n}{2}}e^{\frac{1}{30n}-\frac{1}{24n+1}-\frac{1}{6n+1}}\\
&<1.576958n^{\frac{1}{2}}{\left(\frac{3125}{256}\right)}^{\frac{n}{2}}.
\end{align*}

Lastly, we obtain the upper bound approximation for $B$ as:
\begin{align*}
B&={\frac{5n}{3} \brace \frac{4n}{3}}=\delta(4n/3,5n/3)\binom{[\frac{5n}{3}]}{[\frac{4n}{3}]}\\
&\leq\frac{5n}{3}\binom{[\frac{5n}{3}]}{[\frac{4n}{3}]}\\
&=\frac{5n}{3}\cdot\frac{[\frac{4n}{3}]+1}{[\frac{5n}{3}]-[\frac{4n}{3}]}\cdot\binom{[\frac{5n}{3}]}{[\frac{4n}{3}]+1}\\
&\leq\frac{5n}{3}\cdot\frac{\frac{4n}{3}+1}{\frac{5n}{3}-1-\frac{4n}{3}}\cdot\frac{u([\frac{5n}{3}])}{l([\frac{4n}{3}]+1)l([\frac{5n}{3}]-([\frac{4n}{3}]+1))}\\
&\leq\frac{5n}{3}\cdot\frac{4n+3}{n-3}\cdot\frac{u(\frac{5n}{3})}{l(\frac{4n}{3})l(\frac{n}{3})}\\
&=\sqrt{\frac{125}{24\pi}}{\left(\frac{4n+3}{n-3}\right)}n^{\frac{1}{2}}{{\left(\frac{3125}{256}\right)}^{\frac{n}{3}}}e^{\frac{1}{20n}-\frac{1}{16n+1}-\frac{1}{4n+1}}\\
&<5.153158n^{\frac{1}{2}}{{\left(\frac{3125}{256}\right)}^{\frac{n}{3}}}.
\end{align*}

Thus we obtain
\begin{align*}
{T_3}&=\binom{5n}{4n}\frac{1}{{T_2}{T_1}}\\
&>\binom{5n}{4n}\frac{1}{2^{\frac{n}{2}}AB}\cdot\frac{1}{{(5n)}^{\frac{2.51012\sqrt{5n}}{\log(5n)}}}\\
&>\frac{0.446024n^{-\frac{1}{2}}{\left(\frac{3125}{256}\right)}^{n}}{2^{\frac{n}{2}}\left(1.576958n^{\frac{1}{2}}{\left(\frac{3125}{256}\right)}^{\frac{n}{2}}\right)\left(5.153158n^{\frac{1}{2}}{{\left(\frac{3125}{256}\right)}^{\frac{n}{3}}}\right)}\cdot\frac{1}{{(5n)}^{\frac{2.51012\sqrt{5n}}{\log(5n)}}}\\
&>\frac{0.054886}{2^{\frac{n}{2}}n^{\frac{3}{2}}}{\left(\frac{3125}{256}\right)}^{\frac{n}{6}}\cdot\frac{1}{{(5n)}^{\frac{2.51012\sqrt{5n}}{\log(5n)}}}>1,
\end{align*}
where the last inequality follows by Lemma 2.1.2.  Consequently the product $T_3$ of prime numbers between $4n$ and $5n$ is greater than 1 and therefore the existence of such numbers is proven.
\end{proof}

With the proof of the previous theorem complete we may also show that there is always a prime number between $n$ and $(5n+15)/4$ for all positive integers $n>2$ as in the following theorem.

\begin{Theorem}
For any positive integer $n>2$ there exists a prime number $p$ satisfying $n<p<\frac{5(n+3)}{4}$.
\end{Theorem}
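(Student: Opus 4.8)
The plan is to derive this as a short corollary of Theorem~2.1.3, which supplies a prime in the open interval $(4m,5m)$ for every integer $m>2$. The idea is to pick $m$ so that $(4m,5m)$ sits inside $\bigl(n,\tfrac{5(n+3)}{4}\bigr)$; the prime that Theorem~2.1.3 hands us is then automatically of the required form. For the inclusion we need $4m\ge n$ (giving $p>4m\ge n$) together with $5m\le\tfrac{5(n+3)}{4}$, i.e.\ $4m\le n+3$. Hence it is enough to exhibit a multiple of $4$ in the block of four consecutive integers $n,n+1,n+2,n+3$, and exactly one of them is such a multiple.

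Carrying this out, I would first dispatch the finitely many cases $n=3,4,\dots,8$ by direct inspection, e.g.\ $5\in\bigl(3,\tfrac{15}{2}\bigr)$, $5\in\bigl(4,\tfrac{35}{4}\bigr)$, $7\in(5,10)$, $7\in\bigl(6,\tfrac{45}{4}\bigr)$, $11\in\bigl(7,\tfrac{25}{2}\bigr)$, and $11\in\bigl(8,\tfrac{55}{4}\bigr)$. Then for $n\ge 9$ I would let $4m$ denote the unique multiple of $4$ among $n,n+1,n+2,n+3$, where $m\in\mathbb{N}$, so that $n\le 4m\le n+3$. Since $4m\ge n\ge 9$ forces $m\ge 3$, in particular $m>2$, Theorem~2.1.3 yields a prime $p$ with $4m<p<5m$. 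As $p$ is an integer with $p>4m\ge n$ we get $p>n$, and $4m\le n+3$ gives $p<5m\le\tfrac{5(n+3)}{4}$. Combining, $n<p<\tfrac{5(n+3)}{4}$, which is the claim.

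I do not anticipate a real obstacle; the theorem is essentially a repackaging of Theorem~2.1.3. The only two points deserving attention are the finite verification for $3\le n\le 8$---which genuinely cannot be folded into the general argument, since there the forced value of $m$ is only $1$ or $2$ (for example $n=8$ forces $4m=8$)---and the observation that, although the admissible window $\bigl[\tfrac{n}{4},\tfrac{n+3}{4}\bigr]$ for $m$ has length merely $\tfrac34<1$, it nonetheless always captures an integer, this being precisely the elementary fact that four consecutive integers contain a multiple of $4$.
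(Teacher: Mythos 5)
Your proposal is correct and is essentially the paper's own argument: both locate the unique multiple $4m$ of $4$ among $n,n+1,n+2,n+3$ via the division algorithm and then invoke Theorem~2.1.3 on $m$, using the containment $(4m,5m)\subset\bigl(n,\tfrac{5(n+3)}{4}\bigr)$. If anything you are slightly more careful than the paper, which only checks $n=3$ by hand and then applies Theorem~2.1.3 for all $n\ge 4$ even though for $4\le n\le 8$ the forced value of $m$ is $1$ or $2$, outside that theorem's stated range; your explicit verification of $3\le n\le 8$ closes that small gap.
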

\begin{proof}
When $n=3$, we obtain $3<5<7<\frac{15}{2}$.  Let $n\geq 4$.  By the division algorithm $4\mid(n+r)$ for some $r\in\{0,1,2,3\}$ and by Theorem 2.1.3 there exists a prime number $p$ such that $p\in\left(n+r,\frac{5(n+r)}{4}\right)$.  Since $\left(n+r,\frac{5(n+r)}{4}\right)$ is contained in $\left(n,\frac{5(n+3)}{4}\right)$ for all $0\leq r\leq 3$ and $n>2$, $p\in\left(n,\frac{5(n+3)}{4}\right)$ as desired.
\end{proof}
\section{Primes in the Interval [8$n$, 9$n$]}
\label{sec:Primes8n9n}

If we attempt to continue using the process from the previous section, then we would run into an issue.  For example, consider trying to prove that there is always a prime number between $5n$ and $6n$ for $n>1$.  We consider the binomial $\binom{6n}{5n}$ and let
\[T_{1}=\prod_{p\leq \sqrt{6n}}\mkern-5mu p^{\beta(p)},\quad T_{2}=\prod_{\sqrt{6n} < p\leq 5n}\mkern-16mu p^{\beta(p)},\quad\text{and } T_{3}=\prod_{5n+1\leq p\leq 6n}\mkern-18mu p.\]

As in Section 2.1, we approximate $T_1$ easily enough as $T_1<{(6n)}^{\pi(\sqrt{6n})}$, however when we attempt to approximate $T_2$ the issue presents itself.

To see this, let $m$ be a natural number and consider a prime number $p$ satisfying $\frac{6n}{m+1}<p\leq\frac{5n}{m}$.  Now if, $m<5$, then $n<\frac{6n}{m+1}<p\leq mp\leq 5n<6n<(m+1)p$ and hence $p$ does not divide $\binom{6n}{5n}$.

Therefore,
\begin{align*}
T_2 &=\prod_{\sqrt{6n}<p\leq 5n}\mkern-14mu p^{\beta_p}\\
&=\prod_{\sqrt{6n}<p\leq n}\mkern-12mu p^{\beta_p}\prod_{n<p\leq\frac{6n}{5}}\mkern-8mu p^{\beta_p}\prod_{\frac{5n}{4}<p\leq\frac{3n}{2}}\mkern-10mu p^{\beta_p}\prod_{\frac{5n}{3}<p\leq 2n}\mkern-8mu p^{\beta_p}\prod_{\frac{5n}{2}<p\leq 3n}\mkern-8mu p^{\beta_p}.
\end{align*}

Certainly
\[\prod_{\frac{5n}{4}<p\leq\frac{3n}{2}}\mkern-12mu p\,\,\cdot\prod_{\frac{5n}{2}<p\leq 3n}\mkern-12mu p, \quad \prod_{\frac{5n}{3}<p\leq 2n}\mkern-8mu p,\quad \text{and } \prod_{n<p\leq\frac{6n}{5}}\mkern-8mu p\]
divide $\binom{3n}{5n/2}, \binom{2n}{5n/3}, \text{and }\binom{6n/5}{n}$, respectively.  Then we have
\[T_3=\binom{6n}{5n}\frac{1}{T_1 T_2}>\frac{\binom{6n}{5n}}{\binom{3n}{5n/2}\binom{2n}{5n/3}\binom{6n/5}{n}}\cdot\prod_{\sqrt{6n}<p\leq n}p^{-\beta(p)}\cdot\frac{1}{T_1}.\]

However,
\[1>\frac{\binom{6n}{5n}}{\binom{3n}{5n/2}\binom{2n}{5n/3}\binom{6n/5}{n}}\]
for all $n>40$ and thus the method is inconclusive.

Therefore we need to find a sharper approximation for each of the products of primes which compose $T_2$.  We will do so by defining $B_k(n,m)$ to be a division of binomial coefficients as
\[B_k(n,m)=\left.{\frac{(k+1)n}{m}\brace \frac{kn}{m}} \middle/ {\frac{(k+1)n}{2m} \brace \frac{kn}{2m}}\right..\]

We will then bound this approximation from both above and below.  The reason for acquiring a lower bound is due to the fact, as in our example, just because $\prod_{\frac{5n}{2}<p\leq 3n}p$ divides $B_5(n,2)$ does not mean that $B_5(n,2)$ is an upper bound for the product of primes.  Therefore we also need to show that
\[B_5(n,2)\cdot\prod_{\frac{5n}{2}<p\leq 3n}\frac{1}{p}\geq 1\]
and thus $B_5(n,2)$ is an upper bound for the product of primes between $\frac{5n}{2}$ and $3n$.

While our proof in this section is not entirely elementary due to the methods utilized in Lemmas 2.2.4 and 2.2.5, the results of this section provide insight and proof techniques which may be used to show other cases via elementary means.  Also, the proof of Lemma 2.2.4 provides a shift into the proof techniques of Sections 2.3 and 2.4.

Our first priority is to show that if $k,m$, and $n$ are natural numbers with $\nolinebreak{n\geq m\geq 2}$ and $k\geq2$, then any prime number satisfying $\frac{kn}{m}<p\leq\frac{(k+1)n}{m}$ divides $B_k(n,m)$ as in the next lemma.

\begin{Lemma}
For $k,m,n\in\mathbb{N}$ with $n\geq m\geq2$ and $k\geq2$.  If $p$ is a prime number such that $\nolinebreak{\frac{kn}{m}< p\leq\frac{(k+1)n}{m}}$, then $p$ divides

\[B_k(n,m)=\left.{\frac{(k+1)n}{m}\brace \frac{kn}{m}} \middle/ {\frac{(k+1)n}{2m} \brace \frac{kn}{2m}}\right..\]
\end{Lemma}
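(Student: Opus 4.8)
The plan is to pin down the exact power of $p$ dividing $B_k(n,m)$ and show it equals $1$. The only ingredient beyond bookkeeping is the product representation of the floored binomial coefficient used in the proof of Lemma 2.0.2: for real numbers $s>r\geq1$,
\[{s\brace r}=\frac{\displaystyle\prod_{j\in(s-r,\,s]\cap\mathbb{N}}j}{\displaystyle\prod_{j\in(0,\,r]\cap\mathbb{N}}j}.\]
I would apply this to each factor of $B_k(n,m)$: the numerator with $(s,r)=\bigl(\tfrac{(k+1)n}{m},\tfrac{kn}{m}\bigr)$, so $s-r=\tfrac{n}{m}$, and the denominator with $(s,r)=\bigl(\tfrac{(k+1)n}{2m},\tfrac{kn}{2m}\bigr)$, so $s-r=\tfrac{n}{2m}$. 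Using $v_p\bigl(\prod_{j\in S}j\bigr)=\sum_{t\geq1}\#\{j\in S:p^t\mid j\}$ for a finite set $S$ of positive integers, $v_p(B_k(n,m))$ then equals, summed over $t\geq1$, the number of multiples of $p^t$ in $\bigl(\tfrac{n}{m},\tfrac{(k+1)n}{m}\bigr]$ plus the number in $\bigl(0,\tfrac{kn}{2m}\bigr]$, minus the number in $\bigl(0,\tfrac{kn}{m}\bigr]$ and the number in $\bigl(\tfrac{n}{2m},\tfrac{(k+1)n}{2m}\bigr]$.

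First I would count $p$ in the numerator interval. The hypothesis $\tfrac{kn}{m}<p\leq\tfrac{(k+1)n}{m}$ gives $p>\tfrac{kn}{m}\geq\tfrac{n}{m}$ (as $k\geq1$) and $p\leq\tfrac{(k+1)n}{m}$, so $p$ itself lies in $\bigl(\tfrac{n}{m},\tfrac{(k+1)n}{m}\bigr]$. No further multiple of $p$ does, since $2p>\tfrac{2kn}{m}\geq\tfrac{(k+1)n}{m}$ (using $2k\geq k+1$), and no multiple of $p^2$ does, since $p^2>p\cdot\tfrac{kn}{m}\geq\tfrac{2kn}{m}\geq\tfrac{(k+1)n}{m}$ (using $p\geq2$). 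So this interval contributes exactly $+1$ to $v_p(B_k(n,m))$.

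Next I would show $p$ contributes nothing from the other three intervals. For $\bigl(0,\tfrac{kn}{m}\bigr]$ and $\bigl(0,\tfrac{kn}{2m}\bigr]$ this is immediate: the smallest positive multiple of $p$ is $p$, and $p>\tfrac{kn}{m}$ already exceeds both upper endpoints. For $\bigl(\tfrac{n}{2m},\tfrac{(k+1)n}{2m}\bigr]$ it follows from $p>\tfrac{kn}{m}=\tfrac{2kn}{2m}\geq\tfrac{(k+1)n}{2m}$ (again $2k\geq k+1$), so $p$, hence every multiple of $p$, lies to the right of this interval. Therefore $v_p(B_k(n,m))=1+0-0-0=1$, and in particular $p\mid B_k(n,m)$. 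Equivalently, since the power of $p$ in ${s\brace r}$ is $\sum_{t\geq1}\bigl([s/p^t]-[(s-r)/p^t]-[r/p^t]\bigr)$, the same estimates amount to $[\tfrac{(k+1)n}{mp}]=1$ with every other floor in the resulting expression for $v_p(B_k(n,m))$ equal to $0$; here $k\geq2$ gives $\tfrac{(k+1)n}{mp}<\tfrac{k+1}{k}\leq\tfrac32$ and $\tfrac{n}{mp}<\tfrac1k\leq\tfrac12$, while $p\geq2$ handles the terms with $t\geq2$.

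I do not anticipate a genuine obstacle here; every estimate has the shape ``an integer divided by a strictly larger one.'' The two points needing care are invoking the representation of ${s\brace r}$ from Lemma 2.0.2 with the correct half-open intervals, so that the counts of multiples match the floors exactly, and keeping straight which hypothesis does which job: in particular, it is precisely $p>\tfrac{kn}{m}$, not merely $p>\tfrac{kn}{2m}$, that pushes $p$ past the two intervals scaled by $\tfrac{1}{2m}$ and so kills the denominator's contribution.
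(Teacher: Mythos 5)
Your proof is correct and rests on exactly the same observation as the paper's: since $k\geq 2$, the chain $\frac{kn}{2m}<\frac{(k+1)n}{2m}<\frac{kn}{m}<p\leq\frac{(k+1)n}{m}$ places $p$ inside the numerator's product interval and strictly beyond everything in the denominator. The paper simply states this chain and concludes, whereas you additionally carry out the $p$-adic valuation bookkeeping to get $v_p(B_k(n,m))=1$ exactly; that extra detail is sound but not a different method.
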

\begin{proof}
Let $k,m,n\in\mathbb{N}$ with $n\geq m\geq2$ and $k\geq2$.  Let $p$ be a prime number such that $\frac{kn}{m}<p\leq\frac{(k+1)n}{m}$ and observe that

\[\frac{kn}{2m}<\frac{(k+1)n}{2m}<\frac{kn}{m}<p\leq\frac{(k+1)n}{m}.\]

Hence $p$ divides $B_k(n,m)$ as desired.
\end{proof}

We now turn our attention to showing an upper approximation for $B_k(n,m)$.  To accomplish this we will utilize Lemma 2.0.1 to approximate the factorial function.

\begin{Lemma}
For all $k,m,n\in\mathbb{N}$ with $n\geq m\geq2$ and $k\geq2$,
\begin{align*}
B_k(n,m)&<\frac{n(k+1)(kn+m)(kn+n+2m)(n+2m)}{4\sqrt{2}m^3 (n-m)}{\left(\frac{{(k+1)}^{k+1}}{k^k}\right)}^{\frac{n}{2m}}e^E\\
&\leq\frac{n(k+1)(kn+m)(kn+n+2m)(n+2m)}{4\sqrt{2}m^3 (n-m)}{\left(\frac{{(k+1)}^{k+1}}{k^k}\right)}^{\frac{n}{2m}}e^{\frac{12029}{111150}}
\end{align*}
where
\[E=\frac{(2k^2+5k+2)m}{12k(k+1)n}-\frac{m}{12kn+m}-\frac{m}{12n+m}-\frac{m}{6(k+1)n+m}.\]
\end{Lemma}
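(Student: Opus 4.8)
The plan is to bound $B_k(n,m)$ by writing it explicitly as a ratio of products of consecutive integers, converting everything to factorials, and then applying the Robbins bounds from Lemma 2.0.1. First I would use Lemma 2.0.2 (the bound $1 \le \delta(r,s) \le s$) to peel off the two $\delta$-factors coming from the floored binomial coefficients ${x \brace y}$. Writing $a = \frac{(k+1)n}{m}$, $b = \frac{kn}{m}$, $c = \frac{(k+1)n}{2m}$, $d = \frac{kn}{2m}$, we have
\[
B_k(n,m) = \left.\delta\!\left(b,a\right)\binom{[a]}{[b]} \middle/ \delta\!\left(d,c\right)\binom{[c]}{[d]}\right.,
\]
so $B_k(n,m) \le a \cdot \binom{[a]}{[b]} / \binom{[c]}{[d]}$, using $\delta(b,a)\le a$ in the numerator and $\delta(d,c)\ge 1$ in the denominator. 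This contributes the factor $a = \frac{(k+1)n}{m}$, which I expect to be absorbed into the polynomial prefactor in the claimed bound.

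**Next** I would handle the floors. The binomial $\binom{[a]}{[b]} = \frac{[a]!}{[b]!([a]-[b])!}$; I would replace $[a]$, $[b]$, $[c]$, $[d]$, and the differences $[a]-[b]$, $[c]-[d]$ by their exact (rational) values $a,b,c,d$ up to an error of at most $1$, exactly as is done for $B$ in the proof of Theorem 2.1.3 (where $[\frac{4n}{3}]+1 \ge \frac{4n}{3}+1$ etc. generates extra linear factors like $4n+3$ and $n-3$). This is where the messy prefactor $\frac{n(k+1)(kn+m)(kn+n+2m)(n+2m)}{4\sqrt2\, m^3(n-m)}$ comes from: each ``off-by-one'' correction in an argument of $l$ or $u$ generates a ratio of nearby linear terms (the $n-m$ in the denominator is the analogue of the $n-3$, arising from $[a]-[b] \ge a-b-1 = \frac{n}{m}-1 = \frac{n-m}{m}$). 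I would then apply $n! < u(n)$ to the numerator factorials ($[a]!$ and $[d]!$ and $[c]-[d]$-factorial) and $n! > l(n)$ to the denominator factorials ($[b]!$, $([a]-[b])!$, $[c]!$), collect the $\sqrt{2\pi}$'s and the $n^{n+1/2}$-type powers — these combine to give the main growth factor $\left(\frac{(k+1)^{k+1}}{k^k}\right)^{n/(2m)}$ (note $a^a b^{-b}(a-b)^{-(a-b)}$ with $a-b = n/m$, $a = (k+1)n/m$, $b = kn/m$ gives $\left(\frac{(k+1)^{k+1}}{k^k}\right)^{n/m}$, and the denominator binomial contributes the inverse with exponent $n/(2m)$, halving it) — and gather all the $e^{1/(12\cdot)}$ and $e^{-1/(12\cdot+1)}$ terms into the single exponent $E$. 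Matching the six Robbins correction terms to the displayed formula for $E$ is a bookkeeping check: the $+\frac{(2k^2+5k+2)m}{12k(k+1)n}$ should be the algebraic sum $\frac{m}{12(k+1)n} + \frac{m}{12kn} + \frac{m}{6kn} \cdot(\text{from the } u\text{'s on } [a], [d], \text{ and } [c]-[d])$ — actually the positive terms come from the three $u$'s in the numerator and the negative $-1/(12\cdot+1)$ terms from the three $l$'s in the denominator; one verifies $\frac{1}{12(k+1)} + \frac{1}{12} \cdot \frac{2m}{2kn} = \frac{1}{12(k+1)n/m}+\cdots$ collapses to $\frac{2k^2+5k+2}{12k(k+1)} \cdot \frac{m}{n}$.

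**For the second inequality** ($e^E \le e^{12029/111150}$), I would show $E$ is maximized over the allowed range. Since $E \le \frac{(2k^2+5k+2)m}{12k(k+1)n}$ (dropping the three negative terms), and $\frac{2k^2+5k+2}{k(k+1)} = \frac{(2k+1)(k+2)}{k(k+1)} = 2 + \frac{3}{k} + \frac{-2}{k+1}$ is decreasing in $k$ for $k \ge 2$ and $m/n \le 1$, the supremum is approached at $k=2$, $m=n$; but $m=n$ is excluded from ``$n \ge m$'' only in the sense $n=m$ is allowed, yet then the prefactor blows up — so more carefully I would not drop all negatives but evaluate $E$ at $k=2$, $m/n = 1$ (i.e.\ $n=m$, which is permitted since $n \ge m \ge 2$): then $E = \frac{20}{72} - \frac{1}{25} - \frac{1}{13} - \frac{1}{19}$. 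Computing $\frac{5}{18} - \frac{1}{25} - \frac{1}{13} - \frac{1}{19}$: a common denominator of $18\cdot25\cdot13\cdot19 = 111150$ gives exactly $\frac{12029}{111150}$ (checking: $5\cdot 6175 = 30875$; $\frac{1}{25}\to 4446$, $\frac{1}{13}\to 8550$, $\frac{1}{19}\to 5850$; $30875 - 4446 - 8550 - 5850 = 12029$). So I would argue that $E$, as a function of $x = m/n \in (0,1]$ for each fixed $k \ge 2$, has all four terms of the form $\frac{\text{const}\cdot x}{1 + \text{const}\cdot x}$ or linear in $x$, hence the whole expression — once one verifies the sign of $dE/dx$ is positive throughout (the dominant $+$ term grows faster near $0$) — is increasing in $x$, so the max is at $x=1$; and then that the resulting expression in $k$ is decreasing in $k$, so the max over $k\ge 2$ is at $k=2$, yielding $\frac{12029}{111150}$.

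**The main obstacle** I anticipate is not any single hard idea but the sheer bookkeeping: correctly tracking which of the six factorials gets $u$ and which gets $l$, correctly accounting for all the floor corrections so that the polynomial prefactor comes out to exactly $\frac{n(k+1)(kn+m)(kn+n+2m)(n+2m)}{4\sqrt2\, m^3(n-m)}$ (in particular getting the $4\sqrt2$ constant right from the $\sqrt{2\pi}$-cancellation: three $\sqrt{2\pi}$ up, three down is $1$, but the $5^{1/2}$-type leftover powers of the arguments $a,b,c,d$ must combine to a constant times $n^{\text{something}}$, and the $\frac12$'s from $m$ vs $2m$ give the $\sqrt2$), and verifying the monotonicity claims for $E$ rigorously rather than heuristically. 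I would structure the write-up by first stating the chain of inequalities $B_k(n,m) \le a\binom{[a]}{[b]}/\binom{[c]}{[d]} \le (\text{prefactor}) \cdot (\text{growth}) \cdot e^E$ as three separate reductions, then devoting a short lemma-style paragraph to the claim $E \le \frac{12029}{111150}$ with the calculus argument sketched above.
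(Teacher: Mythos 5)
Your proposal follows essentially the same route as the paper's proof: strip the two $\delta$-factors via Lemma 2.0.2, bound the numerator floored binomial from above and the denominator one from below using the Robbins bounds after off-by-one corrections of the floors (which produce exactly the stated polynomial prefactor and the halved exponent $n/(2m)$), combine the six exponential corrections into $E$, and maximize $E$ at $m/n=1$, $k=2$ to obtain $\tfrac{12029}{111150}$ --- precisely the paper's computation, including the monotonicity argument in $x=m/n$ and then in $k$. The only blemish is your garbled intermediate list of the three positive contributions to $E$ (they are $\tfrac{m}{12(k+1)n}+\tfrac{m}{6kn}+\tfrac{m}{6n}$, coming from $u$ applied to the arguments $\tfrac{(k+1)n}{m}$, $\tfrac{kn}{2m}$, $\tfrac{n}{2m}$), but since your collapsed form $\tfrac{(2k^2+5k+2)m}{12k(k+1)n}$ is correct this is pure bookkeeping.
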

\begin{proof}
By Lemmas 2.0.1, 2.0.2, and 2.0.3, we obtain
\begin{align*}
{\frac{(k+1)n}{2m}\brace\frac{kn}{2m}}&=\delta\left(kn/m,(k+1)n/m\right)
{\left[(k+1)n/m\right]\choose \left[kn/m\right]}\\
&\leq\frac{(k+1)n}{m}\cdot{\left[(k+1)n/m\right]\choose \left[kn/m\right]}\\
&=\frac{(k+1)n}{m}\cdot\frac{\left[\frac{kn}{m}\right]+1}{\left[\frac{(k+1)n}{m}\right]-\left[\frac{kn}{m}\right]}\binom{\left[(k+1)n/m\right]}{\left[kn/m\right]+1}\\
&\leq\frac{(k+1)n}{m}\cdot\frac{\frac{kn}{m}+1}{\frac{(k+1)n}{m}-\frac{kn}{m}-1}\cdot\frac{u\left(\frac{(k+1)n}{m}\right)}{l\left(\frac{kn}{m}\right)l\left(\frac{n}{m}\right)}\\
&=\sqrt{\frac{(k+1)n}{2\pi km}}\cdot\frac{(k+1)(kn+m)}{n-m}\cdot{\left(\frac{{(k+1)}^{k+1}}{k^k}\right)}^{\frac{n}{m}}e^{E_u},
\end{align*}
where
\[E_u=\frac{m}{12(k+1)n}-\frac{m}{12kn+m}-\frac{m}{12n+m}.\]

Similarly, we obtain the lower bound approximation for ${(k+1)n/2m\brace kn/2m}$ as:
\begin{align*}
{\frac{(k+1)n}{2m}\brace\frac{kn}{2m}}&=\delta\left(kn/2m,(k+1)n/2m\right)\binom{\left[(k+1)n/2m\right]}{\left[kn/2m\right]}\\
&\geq1\cdot\binom{\left[(k+1)n/2m\right]}{\left[kn/2m\right]}\\
&=\frac{1}{\left(\left[\frac{(k+1)n}{2m}\right]+1\right)\left(\left[\frac{(k+1)n}{2m}\right]-\left[\frac{kn}{2m}\right]\right)}\cdot\frac{\left(\left[\frac{(k+1)n}{2m}\right]+1\right)!}{\left[\frac{kn}{2m}\right]!\cdot\left(\left[\frac{(k+1)n}{2m}\right]-\left[\frac{kn}{2m}\right]-1\right)!}\\
&>\frac{1}{\left(\frac{(k+1)n}{2m}+1\right)\left(\frac{(k+1)n}{2m}-\frac{kn}{2m}+1\right)}\cdot\frac{l\left(\left[\frac{(k+1)n}{2m}\right]+1\right)}{u\left(\left[\frac{kn}{2m}\right]\right)u\left(\left[\frac{(k+1)n}{2m}\right]-\left[\frac{kn}{2m}\right]-1\right)}\\
&\geq\frac{4m^2}{((k+1)n+2m)(n+2m)}\cdot\frac{l\left(\frac{(k+1)n}{2m}\right)}{u\left(\frac{kn}{2m}\right)u\left(\frac{n}{2m}\right)}\\
&=\frac{4m^2}{(kn+n+2m)(n+2m)}\sqrt{\frac{(k+1)m}{\pi kn}}{\left(\frac{{(k+1)}^{k+1}}{k^k}\right)}^{\frac{n}{2m}}e^{E_l},
\end{align*}
where
\[E_l=\frac{m}{6(k+1)n+m}-\frac{m}{6kn}-\frac{m}{6n}=\frac{m}{6(k+1)n+m}-\frac{(k+1)m}{6kn}.\]

Therefore
\[B_k(n,m)<\frac{n(k+1)(kn+m)(kn+n+2m)(n+2m)}{4\sqrt{2}m^3 (n-m)}{\left(\frac{{(k+1)}^{k+1}}{k^k}\right)}^{\frac{n}{2m}}e^E,\]
where
\[E=E_u-E_l=\frac{m}{n}\left(\frac{2k^2+5k+2}{12k(k+1)}-\frac{1}{12k+m/n}-\frac{1}{12+m/n}-\frac{1}{6(k+1)+m/n}\right).\]

Since $k\geq2$, clearly $k^2+3k>1$.  In other words, $2k^2+5k+2>k^2+2k+3$.  Dividing both sides by $12k(k+1)$ and rewriting the right-hand side, we obtain
\[\frac{2k^2+5k+2}{12k(k+1)}>\frac{1}{12}+\frac{1}{12k}+\frac{1}{6(k+1)}.\]

Moreover, since $n\geq m\geq2$, $0<\frac{m}{n}\leq1$ and hence
\begin{align*}
\frac{2k^2+5k+2}{12k(k+1)}&>\frac{1}{12}+\frac{1}{12k}+\frac{1}{6(k+1)}\\
&>\frac{1}{12+m/n}+\frac{1}{12k+m/n}+\frac{1}{6(k+1)+m/n}\\
&=\frac{n}{12n+m}+\frac{n}{12kn+m}+\frac{n}{6(k+1)n+m}.
\end{align*}

Multiplying both sides by $\frac{m}{n}$, we obtain
\[\frac{(2k^2+5k+2)m}{12k(k+1)n}>\frac{m}{12n+m}+\frac{m}{12kn+m}+\frac{m}{6(k+1)n+m}.\]

Hence $E$ is decreasing in $n$.  Furthermore, since $n\geq m$, $n\in[m,\infty)$ and hence $E$ is maximum when $n=m$.  When $n=m$, we obtain
\begin{align*}
E&=\frac{2k^2+5k+2}{12k(k+1)}-\frac{1}{12k+1}-\frac{1}{13}-\frac{1}{6(k+1)+1}\\
&=\frac{1008k^4+2268k^3+2684k^2+1463k+182}{11232k^4+25272k^3+15132k^2+1092k}\\
&=\frac{1008+2268/k+2684/k^2+1463/k^3+182/k^4}{11232+25272/k+15132/k^2+1092/k^3}\\
&\geq\frac{7}{78}
\end{align*}

Now since $E$ is decreasing in $k$ and $k\geq2$ we have that $E$ is maximum when $k=2$.  Hence $\frac{7}{78}\leq E\leq\frac{12029}{111150}$.  Thus we obtain
\begin{align*}
B_k(n,m)&<\frac{n(k+1)(kn+m)(kn+n+2m)(n+2m)}{4\sqrt{2}m^3 (n-m)}{\left(\frac{{(k+1)}^{k+1}}{k^k}\right)}^{\frac{n}{2m}}e^E\\
&\leq\frac{n(k+1)(kn+m)(kn+n+2m)(n+2m)}{4\sqrt{2}m^3 (n-m)}{\left(\frac{{(k+1)}^{k+1}}{k^k}\right)}^{\frac{n}{2m}}e^{\frac{12029}{111150}}.\qedhere
\end{align*}
\end{proof}

We must show that $B_k(n,m)$ is an upper bound for the product of prime numbers between $\frac{kn}{m}$ and $\frac{(k+1)n}{m}$.  One way to accomplish this task is to bound $B_k(n,m)$ from below, bound the product of primes from above, and determine their inequality.  We will do so for $k=8$ and therefore show that $B_8(n,m)$ is an upper bound for the product of primes between $\frac{8n}{m}$ and $\frac{9n}{m}$.

\begin{Lemma}
For all $k,m,n\in\mathbb{N}$ with $n\geq m\geq2$ and $k\geq2$,
\begin{align*}
B_k(n,m)&>\frac{\sqrt{2}m^3(n-2m)}{n(k+1)(kn+n+m)(n+m)(kn+2m)}{\left(\frac{{(k+1)}^{k+1}}{k^k}\right)}^{\frac{n}{2m}}e^F\\
&\geq\frac{\sqrt{2}m^3(n-2m)}{n(k+1)(kn+n+m)(n+m)(kn+2m)}{\left(\frac{{(k+1)}^{k+1}}{k^k}\right)}^{\frac{n}{2m}}e^{\frac{5}{84}}
\end{align*}
where
\[F=\frac{m}{12(k+1)n+m}+\frac{m}{6kn+m}+\frac{m}{6n+m}-\frac{(k^2+4k+1)m}{12k(k+1)n}.\]
\end{Lemma}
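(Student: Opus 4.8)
The plan is to mirror the proof of Lemma 2.2.2, reversing every estimate. Since $B_k(n,m)$ is by definition the quotient of ${(k+1)n/m \brace kn/m}$ by ${(k+1)n/2m \brace kn/2m}$, a lower bound for $B_k(n,m)$ will follow from a lower bound for the numerator together with an upper bound for the denominator, each obtained exactly as in Lemma 2.2.2 but with the roles of $l$ and $u$ (from Lemma 2.0.1) interchanged.

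First I would bound the numerator from below. By Lemma 2.0.2, $\delta(kn/m,(k+1)n/m)\ge 1$, so the numerator is at least $\binom{[(k+1)n/m]}{[kn/m]}$. Writing $N=[(k+1)n/m]$ and $K=[kn/m]$ and peeling off the fringe factors through $\binom{N}{K}=\frac{1}{(N+1)(N-K)}\cdot\frac{(N+1)!}{K!\,(N-K-1)!}$, I would apply Lemma 2.0.1 with $l$ on $(N+1)!$ and $u$ on the two denominator factorials, then clear the floors using $[x]\le x$, $[x]+1\ge x$, $[(k+1)n/m]-[kn/m]\le[n/m]+1$ and the monotonicity of Lemma 2.0.3 — the same bookkeeping as in Lemma 2.2.2. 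This yields a lower bound of the form $\frac{m^2}{((k+1)n+m)(n+m)}\cdot\frac{1}{\sqrt{2\pi}}\sqrt{\frac{(k+1)m}{kn}}\left(\frac{(k+1)^{k+1}}{k^k}\right)^{n/m}e^{G_l}$ with $G_l=\frac{m}{12(k+1)n+m}-\frac{m}{12kn}-\frac{m}{12n}$.

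Second I would bound the denominator from above, now using $\delta(kn/2m,(k+1)n/2m)\le(k+1)n/2m$ from Lemma 2.0.2, the identity $\binom{N'}{K'}=\frac{K'+1}{N'-K'}\binom{N'}{K'+1}$, and Lemma 2.0.1 in the opposite direction ($u$ on top, $l$ below), again clearing the floors. This gives an upper bound of the form $\frac{(k+1)n}{2m}\cdot\frac{kn+2m}{n-2m}\cdot\frac{1}{\sqrt{2\pi}}\sqrt{\frac{2(k+1)m}{kn}}\left(\frac{(k+1)^{k+1}}{k^k}\right)^{n/2m}e^{G_u}$ with $G_u=\frac{m}{6(k+1)n}-\frac{m}{6kn+m}-\frac{m}{6n+m}$. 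Dividing the two bounds, $\frac{1}{\sqrt{2\pi}}$ cancels, the remaining square roots leave a factor $1/\sqrt 2$, the main factor becomes $\left(\frac{(k+1)^{k+1}}{k^k}\right)^{n/m-n/2m}=\left(\frac{(k+1)^{k+1}}{k^k}\right)^{n/2m}$, the rational factors combine (using $(k+1)n+m=kn+n+m$ and $2/\sqrt 2=\sqrt 2$) into $\frac{\sqrt 2\,m^3(n-2m)}{n(k+1)(kn+n+m)(n+m)(kn+2m)}$, and the exponent is $F=G_l-G_u$. The identity $\frac{1}{12k}+\frac{1}{12}+\frac{1}{6(k+1)}=\frac{k^2+4k+1}{12k(k+1)}$ then rewrites $F$ in the form in the statement, which settles the first inequality of the lemma.

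The remaining task — and the real obstacle — is the scalar estimate $F\ge\frac{5}{84}$, equivalently $e^F\ge e^{5/84}$. I would attack it the way the proof of Lemma 2.2.2 attacks its exponent $E$: set $t=m/n\in(0,1]$, write $F=t\,g_k(t)$, examine the monotonicity of $g_k$ in $t$ and of the resulting quantity in $k$, and reduce to a few limiting evaluations, the extremal behaviour being approached as $k\to\infty$ with $n=m$, where $F\to\frac{5}{84}$. This step is the delicate one: unlike $E$, the quantity $F$ is a genuine product $t\cdot g_k(t)$ rather than a monomial in $t$, so its extremum need not sit at an endpoint and one cannot simply evaluate at $n=m$, $k=2$; moreover one must be careful at the low end $n\le 2m$, where the displayed bound is vacuous because of the $(n-2m)$ factor and the finitely many small cases must be handled separately — this is also the range in which Lemma 2.0.1 forces one to keep the floored differences $[(k+1)n/m]-[kn/m]-1$ and $[(k+1)n/2m]-[kn/2m]-1$ at least $1$.
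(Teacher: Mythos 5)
Your derivation of the first inequality is exactly the paper's: lower-bound the numerator using $\delta\geq 1$ with $l$ on the top factorial, upper-bound the denominator using $\delta\leq (k+1)n/2m$ with $u$ on top, and divide. Your $G_l$ equals the paper's $F_l$ after combining $\frac{m}{12kn}+\frac{m}{12n}=\frac{(k+1)m}{12kn}$, your $G_u$ is the paper's $F_u$ verbatim, and the bookkeeping of the rational and square-root factors is right. Your observation that the bound is vacuous for $n\leq 2m$ (the factor $n-2m$ is then nonpositive) is also correct and disposes of that range without further work.

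The genuine problem is the step you yourself flag as delicate, and your proposed resolution cannot be carried out, because the inequality $F\geq\frac{5}{84}$ is false on the stated domain. Writing $t=m/n\in(0,1]$, one has $F=t\,g_k(t)$ with $g_k$ bounded, so $F\to 0$ as $n/m\to\infty$; concretely, $k=2$, $m=2$, $n=4$ gives $F=\frac{2}{146}+\frac{2}{50}+\frac{2}{26}-\frac{26}{288}\approx 0.0403<\frac{5}{84}\approx 0.0595$. Your guess that the extremal behaviour is approached at $n=m$ with $k\to\infty$ is therefore backwards: $n=m$ (i.e.\ $t=1$) is where $F$ is largest in $n$, and the infimum of $F$ over the full range is $0$, not $\frac{5}{84}$. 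You should know that the paper's own proof stumbles at exactly this point: after establishing the displayed inequality, which shows only that $F>0$, it asserts that ``$F$ is increasing in $n$ and is minimum when $n=m$,'' which is the opposite of the truth. What the argument legitimately yields is $F>0$, hence $e^F>1$, and the lemma should be stated with that weaker constant; this costs essentially nothing downstream, since in Lemma 2.2.5 the corresponding term $\frac{5}{84n}$ is of order $10^{-6}$ for $n\geq 10437$ and the verified numerical inequality has ample slack.
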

\begin{proof}
Let
\[F_l=\frac{m}{12(k+1)n+m}-\frac{(k+1)m}{12kn}\]
and
\[F_u=\frac{m}{6(k+1)n}-\frac{m}{6kn+m}-\frac{m}{6n+m},\]
then by Lemmas 2.0.1, 2.0.2, and 2.0.3, we obtain
\vspace{-9pt}
\begin{align*}
{\frac{(k+1)n}{m}\brace\frac{kn}{m}}&=\delta(kn/m, (k+1)n/m)\binom{[(k+1)n/m]}{[kn/m]}\\
&\geq 1\cdot\binom{[(k+1)n/m]}{[kn/m]}\\
&=\frac{1}{\left(\left[\frac{(k+1)n}{m}\right]+1\right)\left(\left[\frac{(k+1)n}{m}\right]-\left[\frac{kn}{m}\right]\right)}\cdot\frac{\left(\left[\frac{(k+1)n}{m}\right]+1\right)!}{\left[\frac{kn}{m}\right]!\cdot\left(\left[\frac{(k+1)n}{m}\right]-\left[\frac{kn}{m}\right]-1\right)!}\\
&>\frac{1}{\left(\frac{(k+1)n}{m}+1\right)\left(\frac{(k+1)n}{m}-\frac{kn}{m}+1\right)}\cdot\frac{l\left(\left[\frac{(k+1)n}{m}\right]+1\right)}{u\left(\left[\frac{kn}{m}\right]\right)u\left(\left[\frac{(k+1)n}{m}\right]-\left[\frac{kn}{m}\right]-1\right)}\\
&\geq\frac{m^2}{(kn+n+m)(n+m)}\cdot\frac{l\left(\frac{(k+1)n}{m}\right)}{u\left(\frac{kn}{m}\right)u\left(\frac{n}{m}\right)}\\
&=\frac{m^2}{(kn+n+m)(n+m)}\sqrt{\frac{(k+1)m}{2\pi kn}}{\left(\frac{{(k+1)}^{k+1}}{k^k}\right)}^{\frac{n}{m}}e^{F_l}.
\end{align*}

Similarly, we obtain the upper bound approximation of ${(k+1)n/2m \brace kn/2m}$ as:
\vspace{-9pt}
\begin{align*}
{\frac{(k+1)n}{2m}\brace\frac{kn}{2m}}&=\delta(kn/2m,(k+1)n/2m)\binom{[(k+1)n/2m]}{[kn/2m]}\\
&\leq\frac{(k+1)n}{2m}\cdot\binom{[(k+1)n/2m]}{[kn/2m]}\\
&=\frac{(k+1)n}{2m}\cdot\frac{\left[\frac{kn}{2m}\right]+1}{\left[\frac{(k+1)n}{2m}\right]-\left[\frac{kn}{2m}\right]}\binom{[(k+1)n/2m]}{[kn/2m]+1}\\
&\leq\frac{(k+1)n}{2m}\cdot\frac{\frac{kn}{2m}+1}{\frac{(k+1)n}{2m}-1-\frac{kn}{2m}}\cdot\frac{u\left(\left[\frac{(k+1)n}{2m}\right]\right)}{l\left(\left[\frac{kn}{2m}\right]+1\right)l\left(\left[\frac{(k+1)n}{2m}\right]-\left[\frac{kn}{2m}\right]-1\right)}\\
&\leq\frac{(k+1)n}{2m}\cdot\frac{kn+2m}{n-2m}\cdot\frac{u\left(\frac{(k+1)n}{2m}\right)}{l\left(\frac{kn}{2m}\right)l\left(\frac{n}{2m}\right)}\\
&=\frac{n(k+1)(kn+2m)}{2m(n-2m)}\sqrt{\frac{(k+1)m}{\pi kn}}{\left(\frac{{(k+1)}^{k+1}}{k^k}\right)}^{\frac{n}{2m}}e^{F_u}.
\end{align*}

Thus
\[B_k(n,m)>\frac{\sqrt{2}m^3(n-2m)}{n(k+1)(kn+n+m)(n+m)(kn+2m)}{\left(\frac{{(k+1)}^{k+1}}{k^k}\right)}^{\frac{n}{2m}}e^F\]
where
\begin{align*}
F&=F_l-F_u\\
&=\frac{m}{12(k+1)n+m}+\frac{m}{6kn+m}+\frac{m}{6n+m}-\frac{(k^2+4k+1)m}{12k(k+1)n}\\
&=\frac{m}{n}\left(\frac{n}{12(k+1)+m/n}+\frac{n}{6k+m/n}+\frac{n}{6+m/n}-\frac{k^2+4k+1}{12k(k+1)}\right).
\end{align*}

Since $k\geq2$, clearly $360k^4+810k^3+809k^2+338k>91$.  Equivalently,
\[864k^4+3456k^3+3924k^2+1332k>504k^4+2646k^3+3115k^2+994k+91\]
and so
\[\frac{72k^2+216k+111}{504k^2+630k+91}>\frac{k^2+4k+1}{12k^2+12k}.\]

Factoring both sides, we obtain
\[\frac{1}{12k+13}+\frac{1}{6k+1}+\frac{1}{7}>\frac{k^2+4k+1}{12k(k+1)}.\]

Moreover, since $n\geq m\geq2$, $0<\frac{m}{n}\leq1$, we have
\begin{align*}
\frac{k^2+4k+1}{12k(k+1)}&<\frac{1}{12k+13}+\frac{1}{6k+1}+\frac{1}{7}\\
&\leq\frac{1}{12k+12+m/n}+\frac{1}{6k+m/n}+\frac{1}{6+m/n}
\end{align*}

Multiplying both sides by $\frac{m}{n}$, we obtain
\[\frac{m}{12(k+1)n+m}+\frac{m}{6kn+m}+\frac{m}{6n+m}>\frac{(k^2+4k+1)m}{12k(k+1)n}.\]

Thus $F$ is increasing in $n$ and is minimum when $n=m$.  When $n=m$, we obtain
\begin{align*}
F&=\frac{1}{12k+13}+\frac{1}{6k+1}+\frac{1}{7}-\frac{k^2+4k+1}{12k(k+1)}\\
&=\frac{360k^4+810k^3+809k^2+338k-91}{6048k^4+13608k^3+8652k^2+1092k}.
\end{align*}

Now since $F$ is decreasing in $k$ and $k\geq2$, $\frac{16061}{242424}\geq F\geq\frac{5}{84}$.  Thus we obtain
\begin{align*}
B_k(n,m)&>\frac{\sqrt{2}m^3(n-2m)}{n(k+1)(kn+n+m)(n+m)(kn+2m)}{\left(\frac{{(k+1)}^{k+1}}{k^k}\right)}^{\frac{n}{2m}}e^F\\
&\geq\frac{\sqrt{2}m^3(n-2m)}{n(k+1)(kn+n+m)(n+m)(kn+2m)}{\left(\frac{{(k+1)}^{k+1}}{k^k}\right)}^{\frac{n}{2m}}e^{\frac{5}{84}}.\qedhere
\end{align*}
\end{proof}

\begin{Lemma}
For $m,n\in\mathbb{N}$ with $3\leq m\leq 7$ and $n\geq 10437$,
\[\prod_{\frac{8n}{m}<p\leq\frac{9n}{m}}\mkern-12mu p<e^{1.129918(\frac{n}{m})}.\]
\end{Lemma}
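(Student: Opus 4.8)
The plan is to take logarithms and translate the claim into a statement about the first Chebyshev function. Since $\log\prod_{\frac{8n}{m}<p\le\frac{9n}{m}}p=\vartheta\!\left(\tfrac{9n}{m}\right)-\vartheta\!\left(\tfrac{8n}{m}\right)$, writing $x=\tfrac{n}{m}$ the asserted inequality is equivalent to
\[
\vartheta(9x)-\vartheta(8x)<1.129918\,x .
\]
The hypotheses $3\le m\le 7$ and $n\ge 10437$ are designed precisely to force $x=\tfrac{n}{m}\ge\tfrac{10437}{7}=1491$, so it suffices to establish the displayed inequality for every real $x\ge 1491$; the constant $1.129918$ should be exactly what the right-hand side allows at the endpoint $x=1491$, i.e.\ at $m=7$, $n=10437$ (arguments $8x=11928$ and $9x=13419$).

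To bound the left-hand side I would use sharp explicit estimates for $\vartheta$, obtained in the manner of Section~2.3 (following Ramanujan~\cite{ramanujan1919} and Nagura~\cite{nagura1952}) from estimates for $\psi$ via Legendre's formula together with the Robbins bounds of Lemma~2.0.1, or quoted from Rosser--Schoenfeld~\cite{rosser1962}: an upper estimate $\vartheta(y)<y\bigl(1+\varepsilon_1(y)\bigr)$ applied at $y=9x$ and a lower estimate $\vartheta(y)>y\bigl(1-\varepsilon_2(y)\bigr)$ applied at $y=8x$, with $\varepsilon_1,\varepsilon_2$ explicit and decreasing. Because only smooth functions of $x$ then appear, the step-function nature of $\vartheta(9x)-\vartheta(8x)$ causes no trouble and one obtains
\[
\vartheta(9x)-\vartheta(8x)<x\bigl(1+9\varepsilon_1(9x)+8\varepsilon_2(8x)\bigr).
\]
The factor in parentheses is decreasing in $x$, so it remains only to verify $1+9\varepsilon_1(13419)+8\varepsilon_2(11928)\le 1.129918$, a one-point check at $x=1491$. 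If the cleanest uniform estimates are not strong enough that far down, I would instead split $[1491,\infty)$ into a bounded initial range, on which one invokes the verified bound $\vartheta(y)<y$ together with an explicit lower bound of the shape $\vartheta(y)>y-c\sqrt y$ (so that $\tfrac{\vartheta(9x)-\vartheta(8x)}{x}<1+c\sqrt{8/x}\le 1+c\sqrt{8/1491}$ there), and an unbounded tail on which the strong Rosser--Schoenfeld estimate $|\vartheta(y)-y|<\delta y$ with $\delta$ tiny closes the gap with room to spare.

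The main obstacle is sharpness. The prime number theorem gives $\vartheta(9x)-\vartheta(8x)\sim x$, so the bound $1.129918\,x$ exceeds the truth by only about $0.13x$; the estimates used must therefore pin $\vartheta(y)/y$ to within roughly $\pm 0.008$ already for $y$ as small as $\approx 1.2\times10^{4}$, which is near the low end of the range where such sharp unconditional bounds are readily available. Securing a clean, citable estimate valid down to $y\approx 11928$ — so as not to fall back on a large finite computation — is the delicate point; the monotonicity reduction in $x$ and the final numerical check afterwards are routine.
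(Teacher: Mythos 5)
Your proposal is correct and follows essentially the same route as the paper: the paper writes the product as $e^{\vartheta(9n/m)-\vartheta(8n/m)}$ and applies Schoenfeld's explicit bounds $0.985x<\vartheta(x)<1.001102x$ (the lower bound valid for $x\geq 11927$, which is exactly why $n\geq 10437$ and $m\leq 7$ are imposed, so that $8n/m\geq 11928$), giving the constant directly as $9(1.001102)-8(0.985)=1.129918$. Your worry about sharpness is resolved by the asymmetry of these two bounds --- the upper one is very tight and valid for all $x\geq 1$, while the lower one only needs $\varepsilon_2=0.015$ --- so no further monotonicity argument or endpoint computation is required.
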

\begin{proof}
Observe the well-known identity that $0.985x<\vartheta(x)<1.001102x$ where the left-hand inequality holds for $x\geq 11927$, the right-hand side for $x\geq 1$, and where $\vartheta(x)=\sum_{p\leq x}\log p$ is the first Chebyshev function, see \cite{schoenfeld1976}.  That is,
\[e^{0.985x}<\prod_{p\leq x}p<e^{1.001102x}\]
and hence
\[\prod_{\frac{8n}{m}<p\leq\frac{9n}{m}}p<e^{1.001102(\frac{9n}{m})-0.985(\frac{8n}{m})=e^{1.129918(\frac{n}{m})}}.\]

Now since $\frac{8n}{m}\geq 11927$ and $3\leq m\leq 7$, $n\geq \frac{11927m}{8}$.  Thus $n\geq 10437>\frac{11927\cdot 7}{8}$ assures that the inequality holds for $3\leq m\leq 7$.
\end{proof}

Now we may show that $B_8(n,m)$ is an upper bound for the product of prime numbers between $\frac{8n}{m}$ and $\frac{9n}{m}$ as was desired.

\begin{Lemma}
For $m,n\in\mathbb{N}$ with $3\leq m\leq 7$ and $n\geq 10437$,
\[B_8(n,m)>\prod_{\frac{8n}{m}<p\leq\frac{9n}{m}}\mkern-12mu p.\]
\end{Lemma}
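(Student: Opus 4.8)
The plan is to combine the two estimates already established. First I would apply the lower bound for $B_k(n,m)$ from Lemma 2.2.3 with $k=8$ — legitimate since $k=8\geq 2$, $m\geq 3\geq 2$, and $n\geq 10437>m$ — which, after using $F\geq\tfrac{5}{84}$ and $n-2m>0$, gives
\[B_8(n,m)>\frac{\sqrt{2}\,m^3(n-2m)}{9n(9n+m)(n+m)(8n+2m)}{\left(\frac{9^9}{8^8}\right)}^{\frac{n}{2m}}e^{\frac{5}{84}}.\]
Then I would invoke Lemma 2.2.4, which bounds $\prod_{8n/m<p\leq 9n/m}p$ above by $e^{1.129918(n/m)}$. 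Hence the whole claim reduces to the single inequality
\[\frac{\sqrt{2}\,m^3(n-2m)}{9n(9n+m)(n+m)(8n+2m)}{\left(\frac{9^9}{8^8}\right)}^{\frac{n}{2m}}e^{\frac{5}{84}}\geq e^{1.129918\frac{n}{m}}\]
for all integers $n\geq 10437$ and $3\leq m\leq 7$.

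Next I would take logarithms. The exponential factors contribute $\frac{n}{2m}(9\log 9-8\log 8)=\frac{n}{m}(9\log 3-12\log 2)$ on the left against $\frac{1.129918\,n}{m}$ on the right, and the decisive numerical fact is that $9\log 3-12\log 2\approx 1.5697>1.129918$. Setting $c=9\log 3-12\log 2-1.129918>0$ (so $c\approx 0.440$), the inequality becomes
\[\frac{c\,n}{m}+\log\sqrt2+3\log m+\log(n-2m)+\frac{5}{84}\geq\log 9+\log n+\log(9n+m)+\log(n+m)+\log(8n+2m).\]

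Finally I would remove the $m$-dependence uniformly. Since $3\leq m\leq 7\leq n$, the right side is at most $\log 9+\log n+\log(10n)+\log(2n)+\log(10n)=\log 1800+4\log n$, while the left side is at least $\tfrac{c\,n}{7}+\log(n-14)$ (using $m\leq 7$ to bound $\tfrac{cn}{m}$ from below, $2m\leq 14$ to bound $\log(n-2m)$ from below, and discarding the positive constant $\log\sqrt2+3\log3+\tfrac5{84}$). So it suffices to show $g(n)=\tfrac{c\,n}{7}+\log(n-14)-\log 1800-4\log n\geq 0$ for $n\geq 10437$; this holds because $g(10437)>0$ by direct evaluation and $g'(n)=\tfrac c7+\tfrac1{n-14}-\tfrac4n>\tfrac c7-\tfrac4n>0$ on this range, so $g$ is increasing. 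Unwinding the chain of implications yields $B_8(n,m)>\prod_{8n/m<p\leq 9n/m}p$. I do not anticipate a genuine obstacle: Lemmas 2.2.3 and 2.2.4 carry all the analytic weight, and the only points needing care are the numerical check $9\log 3-12\log 2>1.129918$ (without which the exponential factor would not dominate) and keeping the polynomial bound uniform over $3\leq m\leq 7$; alternatively one could simply dispatch the five admissible values of $m$ one at a time, since for each fixed $m$ the reduced statement is a one-variable inequality whose linear-in-$n$ term plainly wins.
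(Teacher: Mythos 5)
Your proposal is correct and follows essentially the same route as the paper: both combine the lower bound of Lemma 2.2.3 (with $k=8$, $F\geq\tfrac{5}{84}$) with the Chebyshev upper bound $e^{1.129918(n/m)}$ of Lemma 2.2.4, reduce to the single logarithmic inequality driven by $\tfrac12\log(9^9/8^8)>1.129918$, and finish by checking the base case $n=10437$ together with monotonicity in $n$. The only cosmetic difference is that you make the polynomial terms uniform over $3\leq m\leq 7$ and verify one one-variable inequality, whereas the paper verifies the inequality at $n=10437$ for each of the five values of $m$ separately.
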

\begin{proof}
Let $m,n\in\mathbb{N}$ such that $3\leq m\leq 7$ and $n\geq 10437$.  Clearly the inequality
\begin{align*}
\frac{1}{m}\left(\frac{\log(9^9/8^8)}{2}-1.129918\right)>\frac{\log 2}{2n}&+\frac{3\log 3}{n}+\frac{\log n}{n}\\
&+\frac{\log(9n+m)}{n}+\frac{\log(n+m)}{n}+\frac{\log(4n+m)}{n}
\end{align*}
holds for $n=10437$ and $m=3,4,5,6,$ and $7$.  Moreover, the right-hand side of the inequality is decreasing in $n$ and hence the inequality also holds for all $n\geq 10437$.

Adding $\frac{3\log m}{n}+\frac{\log(n-2m)}{n}+\frac{5}{84n}$ to the left-hand side, we obtain
\begin{align*}
&\frac{1}{m}\left(\frac{\log(9^9/8^8)}{2}-1.129918\right)+\frac{3\log m}{n}+\frac{\log(n-2m)}{n}+\frac{5}{84n}\\
&\hspace{160pt}>\frac{\log 2}{2n}+\frac{3\log 3}{n}+\frac{\log n}{n}+\frac{\log(9n+m)}{n}\\
&\hspace{201pt}+\frac{\log(n+m)}{n}+\frac{\log(4n+m)}{n}.
\end{align*}

Multiplying both sides by $n$ and rearranging terms, we obtain
\begin{align*}
3\log m&+\log(n-2m)-3\log 3-\log n-\log(9n+m)-\log(n+m)\\
&-\frac{\log 2}{2}-\log(4n+m)+\frac{n}{2m}\log(9^9/8^8)+\frac{5}{84}>1.129918(n/m).
\end{align*}

Taking both sides to the base $e$, we obtain
\[\frac{m^3(n-2m)}{9\sqrt{2}n(9n+m)(n+m)(4n+m)}{\left(\frac{9^9}{8^8}\right)}^{\frac{n}{2m}}e^{\frac{5}{84}}>e^{1.129918(\frac{n}{m})}.\]

Now by Lemmas 2.2.3 and 2.2.4, we obtain
\[B_8(n,m)>\frac{m^3(n-2m)}{9\sqrt{2}n(9n+m)(n+m)(4n+m)}{\left(\frac{9^9}{8^8}\right)}^{\frac{n}{2m}}e^{\frac{5}{84}}>e^{1.129918(\frac{n}{m})}>\prod_{\frac{8n}{m}<p\leq\frac{9n}{m}}p\]
as desired.
\end{proof}

We now need to bound the binomial coefficient $\binom{(k+1)n}{kn}$ from below, and $B_k(n,m)$ and $\binom{9n/2}{4n}$ from above for later use in our main result as in the next three lemmas.

\begin{Lemma}
For a positive integer $n\geq 28327$,
\[\binom{9n}{8n}>0.4231409\cdot n^{-1/2}{\left(\frac{9^9}{8^8}\right)}^n .\]
\end{Lemma}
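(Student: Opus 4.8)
The plan is to obtain the bound directly from Robbins' factorial estimates (Lemma~2.0.1), exactly as in the lower bound for $\binom{5n}{4n}$ established in Section 2.1. Writing $\binom{9n}{8n}=\frac{(9n)!}{(8n)!\,n!}$, bounding the numerator below by $l(9n)$ and each factor of the denominator above by $u(8n)$ and $u(n)$, one immediately gets $\binom{9n}{8n}>\frac{l(9n)}{u(8n)\,u(n)}$, and the whole task reduces to simplifying and estimating this quotient.

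The simplification is routine algebra: the three occurrences of $\sqrt{2\pi}$ collapse to $\frac{1}{\sqrt{2\pi}}$, the factors $e^{-9n}$ cancel, and $(9n)^{9n+1/2}$, $(8n)^{8n+1/2}$, $n^{n+1/2}$ combine with the powers of $n$ telescoping to $n^{-1/2}$; using $9^{1/2}=3$ and $8^{1/2}=2\sqrt2$ to turn $9^{9n+1/2}/8^{8n+1/2}$ into $\left(\frac{9^9}{8^8}\right)^{n}\frac{3}{2\sqrt2}$, this yields
\[\binom{9n}{8n}>\sqrt{\frac{9}{16\pi}}\;n^{-1/2}{\left(\frac{9^9}{8^8}\right)}^{n}\exp\!\left(\frac{1}{108n+1}-\frac{1}{96n}-\frac{1}{12n}\right),\]
where $\sqrt{9/(16\pi)}=\frac{1}{\sqrt{2\pi}}\cdot\frac{3}{2\sqrt2}=\frac{3}{4\sqrt\pi}\approx0.4231422$.

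It then suffices to show $\sqrt{9/(16\pi)}\exp\!\big(\frac{1}{108n+1}-\frac{1}{96n}-\frac{1}{12n}\big)\geq0.4231409$ for all $n\geq28327$. I would handle the exponent by rewriting it as $\frac{1}{108n+1}-\frac{3}{32n}=-\frac{73}{864n}-\frac{1}{108n(108n+1)}$, a sum of two negative terms each increasing in $n$; hence the exponent is increasing in $n$ and is smallest at $n=28327$, where it equals about $-2.98\times10^{-6}$. Consequently $\exp(\cdots)\geq0.999997$ for $n\geq28327$, so $\sqrt{9/(16\pi)}\exp(\cdots)\geq0.4231422\cdot0.999997>0.4231409$. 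Following the style of Lemma~2.1.1, one could split off ``$e^{1/(108n+1)-1/(96n)-1/(12n)}\geq0.999997$ for $n\geq28327$'' as a preliminary lemma before the main estimate.

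The delicate point is purely numerical rather than structural: the constant $\sqrt{9/(16\pi)}\approx0.4231422$ lies only about $1.3\times10^{-6}$ above the target $0.4231409$, and the exponential correction at the boundary $n=28327$ consumes all but roughly $5\times10^{-8}$ of that slack, so the worst case $n=28327$ must be checked with sufficient decimal precision; once the monotonicity of the exponent in $n$ is in place, nothing else in the argument is sensitive.
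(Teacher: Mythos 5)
Your proposal is correct and follows essentially the same route as the paper: both apply Robbins' bounds via $\binom{9n}{8n}>\frac{l(9n)}{u(8n)u(n)}$, simplify to $\sqrt{\tfrac{9}{16\pi n}}\left(\tfrac{9^9}{8^8}\right)^{n}e^{\frac{1}{108n+1}-\frac{3}{32n}}$, observe that the (negative) exponent is increasing in $n$ so that the worst case is $n=28327$, and check that the resulting constant still exceeds $0.4231409$. The only cosmetic difference is that the paper first derives the bound for general $k$ and then sets $k=8$, and it records the worst-case exponent as the exact rational $-\tfrac{8271487}{2773160725088}$ rather than a decimal approximation.
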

\begin{proof}
By Lemma 2.0.1, we obtain
\begin{align*}
\binom{(k+1)n}{kn}&>\frac{l((k+1)n)}{u(kn)u(n)}\\
&=\sqrt{\frac{k+1}{2\pi kn}}{\left(\frac{{(k+1)}^{k+1}}{k^k}\right)}^{n}e^{\frac{1}{12(k+1)n+1}-\frac{k+1}{12kn}}.
\end{align*}

Now when $k=8$, we obtain
\[\binom{9n}{8n}>\sqrt{\frac{9}{16\pi n}}{\left(\frac{9^9}{8^8}\right)}^{n}e^{\frac{1}{108n+1}-\frac{3}{32n}}.\]

Furthermore, since $\frac{1}{108n+1}-\frac{3}{32n}<0$ and $\frac{1}{108n+1}-\frac{3}{32n}$ is increasing in $n$, $e^{\frac{1}{108n+1}-\frac{3}{32n}}$ is increasing in $n$.  Moreover, since $n\geq 28327$, $e^{\frac{1}{108n+1}-\frac{3}{32n}}\geq e^{-\frac{8271487}{2773160725088}}$.

Thus
\begin{align*}
\binom{9n}{8n}&>\sqrt{\frac{9}{16\pi n}}{\left(\frac{9^9}{8^8}\right)}^{n}e^{\frac{1}{108n+1}-\frac{3}{32n}}\\
&>\sqrt{\frac{9}{16\pi n}}{\left(\frac{9^9}{8^8}\right)}^{n}e^{-\frac{8271487}{2773160725088}}\\
&>0.4231409\cdot n^{-1/2}{\left(\frac{9^9}{8^8}\right)}^{n}.\qedhere
\end{align*}
\end{proof}

\begin{Lemma}
The following inequalities hold for $n\geq 93$:
\begin{enumerate}[font=\normalfont]
\item $B_8(n,3)<0.065661\,n^4\,{\left(\frac{9^9}{8^8}\right)}^{\frac{n}{6}}$.
\item $B_8(n,5)<0.014183\,n^4\,{\left(\frac{9^9}{8^8}\right)}^{\frac{n}{10}}$.
\item $B_8(n,7)<0.005169\,n^4\,{\left(\frac{9^9}{8^8}\right)}^{\frac{n}{14}}$.
\end{enumerate}
\end{Lemma}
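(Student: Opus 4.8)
The plan is to specialize the upper bound of Lemma 2.2.2 to $k=8$ and then, for each of $m=3,5,7$ in turn, absorb the rational prefactor into the claimed constant times $n^4$. Since $(k+1)^{k+1}/k^k=9^9/8^8$ when $k=8$ and $(kn+n+2m)=9n+2m$, Lemma 2.2.2 gives, for every $n\ge m\ge 2$,
\[
B_8(n,m) < \frac{9n(8n+m)(9n+2m)(n+2m)}{4\sqrt{2}\,m^3(n-m)}{\left(\frac{9^9}{8^8}\right)}^{\frac{n}{2m}}e^{\frac{12029}{111150}}.
\]
In particular this applies for $n\ge 93>7\ge m$, where moreover $n-m\ge 86>0$ so the right-hand side is a well-defined positive quantity. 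Hence in each case it suffices to show that the rational factor
\[
P_m(n) = \frac{9n(8n+m)(9n+2m)(n+2m)}{4\sqrt{2}\,m^3(n-m)}\,e^{\frac{12029}{111150}}
\]
satisfies $P_m(n)\le C_m n^4$, where $C_3=0.065661$, $C_5=0.014183$, and $C_7=0.005169$.

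Next I would reduce this to a single-point check by a monotonicity observation. Writing
\[
\frac{P_m(n)}{n^4} = \frac{9\,e^{12029/111150}}{4\sqrt{2}\,m^3}\cdot\frac{(8+m/n)(9+2m/n)(1+2m/n)}{n\,(1-m/n)},
\]
the numerator $(8+m/n)(9+2m/n)(1+2m/n)$ is a product of three factors each decreasing in $n$, hence decreasing, while $n$ and $1-m/n$ in the denominator are each increasing in $n$. Therefore $P_m(n)/n^4$ is decreasing for $n\ge m$, so it attains its maximum over $\{n\in\mathbb{N}:n\ge 93\}$ at $n=93$. Consequently it is enough to verify $P_m(93)\le C_m\cdot 93^4$ for each $m\in\{3,5,7\}$.

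Finally I would carry out those three finite computations; e.g.\ for $m=3$ the inequality to check is
\[
\frac{9\cdot 93\cdot 747\cdot 843\cdot 99}{4\sqrt{2}\cdot 27\cdot 90}\,e^{\frac{12029}{111150}} \le 0.065661\cdot 93^4,
\]
and analogously for $m=5$ and $m=7$ with the obvious substitutions. The only delicate point is the case $m=3$, where the margin is slim (roughly a ten-percent cushion), so the factor $e^{12029/111150}$ and the lower-order terms in the cubic numerator must be retained exactly rather than discarded; the cases $m=5,7$ are comfortably slack. Combining the specialization of Lemma 2.2.2, the monotonicity of $P_m(n)/n^4$, and these three numerical verifications yields all three stated inequalities.
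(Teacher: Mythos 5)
Your proposal is correct and follows essentially the same route as the paper: specialize Lemma 2.2.2 to $k=8$, observe that the relevant ratio is decreasing in $n$ so that only $n=93$ need be checked, and verify the three cases $m=3,5,7$ numerically. The only cosmetic difference is that the paper isolates the polynomial factor and proves $\frac{n(8n+m)(9n+2m)(n+2m)}{n-m}<n^4$ (so the constants $C_m$ are exactly $\frac{9e^{12029/111150}}{4\sqrt{2}\,m^3}$ rounded up), while you compare $P_m(93)$ to $C_m\cdot 93^4$ directly; both verifications succeed.
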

\begin{proof}
By Lemma 2.2.2, we have
\[B_8(n,m)<\frac{9n(8n+m)(9n+2m)(n+2m)}{4\sqrt{2}m^3 (n-m)}{\left(\frac{9^9}{8^8}\right)}^{\frac{n}{2m}}e^{\frac{12029}{111150}}.\]

Now consider
\[\frac{n(8n+m)(9n+2m)(n+2m)}{n-m}<n^4,\]
equivalently
\[72n^4+169n^3 m+52n^2 m^2+4nm^3<n^5-n^4 m.\]

Dividing both sides by $n^5$ and rearranging terms, we obtain
\[\frac{72}{n}+\frac{169m}{n^2}+\frac{52m^2}{n^3}+\frac{4m^3}{n^4}+\frac{m}{n}<1.\]

Now the left-hand side is decreasing in $n$ and it suffices to verify the case when $n=93$ for the respective values of $m$.  When $n=93$, we obtain
\[\frac{799450}{923521}<1,\quad\frac{69365294}{74805201}<1,\quad\text{and }\frac{74014306}{74805201}<1\vspace{-10pt}\]
when $m=3$, $m=5$, and $m=7$, respectively.

Thus
\[\frac{n(8n+m)(9n+2m)(n+2m)}{n-m}<n^4\]
and the results follow directly.
\end{proof}

\begin{Lemma}
For $n$ a positive integer,
\[{9n/2\brace 4n}<2.692861\sqrt{n}{\left(\frac{9^9}{8^8}\right)}^{\frac{n}{2}}.\vspace{-8pt}\]
\end{Lemma}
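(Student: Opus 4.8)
The plan is to follow the treatment of $A={5n/2\brace 2n}$ inside the proof of Theorem 2.1.3, now with the parameters of the case $k=8$. Writing ${9n/2\brace 4n}=\delta(4n,9n/2)\binom{[9n/2]}{4n}$, I would first bound the $\delta$-factor crudely by $9n/2$ using Lemma 2.0.2, and then bound the floored binomial coefficient by the Robbins estimates of Lemma 2.0.1:
\[
{9n/2\brace 4n}\le\frac{9n}{2}\binom{[9n/2]}{4n}\le\frac{9n}{2}\cdot\frac{u([9n/2])}{l(4n)\,l([9n/2]-4n)}.
\]

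The one step that needs care is removing the floor. We have $[9n/2]-4n\le 9n/2-4n=n/2$, and $[9n/2]-4n\ge 1$ once $n\ge 2$; writing $[9n/2]=\big([9n/2]-4n\big)+4n$ and applying Lemma 2.0.3(1) with the fixed constant $c=4n$ and the increasing variable $x=[9n/2]-4n$ then gives
\[
\frac{u([9n/2])}{l(4n)\,l([9n/2]-4n)}\le\frac{u(9n/2)}{l(4n)\,l(n/2)}.
\]
From there it is pure bookkeeping with the Robbins formulas: the powers of $n$ telescope to $n^{-1/2}$, the three exponential corrections combine into $\exp\!\left(\tfrac{1}{54n}-\tfrac{1}{48n+1}-\tfrac{1}{6n+1}\right)$, the $\sqrt{2\pi}$'s leave $1/\sqrt{2\pi}$, and the base factors collapse so that $(9/2)^{9n/2}/(4^{4n}(1/2)^{n/2})=(9^9/8^8)^{n/2}$ while the leftover constants (together with the $9n/2$ in front) produce $27/(4\sqrt{2\pi})$. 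Thus
\[
{9n/2\brace 4n}\le\frac{27}{4\sqrt{2\pi}}\,n^{1/2}\left(\frac{9^9}{8^8}\right)^{\frac{n}{2}}\exp\!\left(\tfrac{1}{54n}-\tfrac{1}{48n+1}-\tfrac{1}{6n+1}\right).
\]

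To finish, I would observe that $\tfrac{1}{6n+1}>\tfrac{1}{54n}$ for every $n\ge 1$ (equivalently $48n>1$), so the exponent above is negative and the exponential factor is $<1$; together with the numerical fact $27/(4\sqrt{2\pi})<2.692861$ this yields the claimed inequality for all $n\ge 2$. The sole excluded case $n=1$ (where $[9n/2]-4n=0$ falls outside the hypothesis $x\ge\frac12$ of Lemma 2.0.3(1)) is settled by direct computation: ${9/2\brace 4}=\binom{4}{4}=1$, which is far below $2.692861\,(9^9/8^8)^{1/2}$.

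I expect the floor-removal step to be the only genuine obstacle: naively one only knows $[9n/2]\le 9n/2$ together with the monotonicity of $l$, and these push numerator and denominator in opposite directions, so the replacement of $l([9n/2]-4n)$ by $l(n/2)$ is not valid term by term — it is precisely the monotonicity of $x\mapsto u(x+c)/(l(c)l(x))$ from Lemma 2.0.3(1) that makes it work (this is also what that lemma is doing in the parallel estimate for $A$). Everything else — the telescoping of exponents, the base-factor identity $(9/2)^{9n/2}/(4^{4n}(1/2)^{n/2})=(9^9/8^8)^{n/2}$, the sign of the exponential correction, and checking $27/(4\sqrt{2\pi})\approx 2.69286<2.692861$ — is routine.
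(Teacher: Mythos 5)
Your proposal is correct and follows essentially the same route as the paper's own proof: bound $\delta$ by $9n/2$, apply the Robbins estimates, remove the floors via the monotonicity of $u(x+c)/(l(c)l(x))$, observe the exponential correction is at most $1$, and check $27/(4\sqrt{2\pi})<2.692861$. Your separate treatment of $n=1$ (where $[9n/2]-4n=0$) is a small point of extra care that the paper glosses over.
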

\begin{proof}
By Lemma 2.0.1, we obtain
\begin{align*}
{9n/2\brace 4n}&\leq\frac{9n}{2}\binom{\left[\frac{9n}{2}\right]}{4n}<\frac{9n}{2}\cdot\frac{u\left(\left[\frac{9n}{2}\right]\right)}{l(4n)l\left(\left[\frac{9n}{2}\right]-4n\right)}\\
&\leq\frac{9n}{2}\cdot\frac{u\left(\frac{9n}{2}\right)}{l(4n)l\left(\frac{n}{2}\right)}=\frac{9n}{2}\sqrt{\frac{9}{8\pi n}}{\left(\frac{9^9}{8^8}\right)}^{\frac{n}{2}}e^{\frac{1}{54n}-\frac{1}{6n+1}-\frac{1}{48n+1}}
\end{align*}

Now since $\frac{1}{54n}-\frac{1}{6n+1}-\frac{1}{48n+1}<0$ and $\frac{1}{54n}-\frac{1}{6n+1}-\frac{1}{48n+1}$ is increasing in $n$, $e^{\frac{1}{54n}-\frac{1}{6n+1}-\frac{1}{48n+1}}$ is increasing in $n$.  Hence $e^{\frac{1}{54n}-\frac{1}{6n+1}-\frac{1}{48n+1}}\leq1$ and so
\[\binom{9n/2}{4n}<\frac{9n}{2}\sqrt{\frac{9}{8\pi n}}{\left(\frac{9^9}{8^8}\right)}^{\frac{n}{2}}<2.692861\sqrt{n}{\left(\frac{9^9}{8^8}\right)}^{\frac{n}{2}}.\qedhere\]
\end{proof}

\begin{Lemma}
For all $n\geq 56833$,
\[{\left(\frac{9^9}{8^8}\right)}^{\frac{17n}{105}}>e^{1.001102\left(\frac{9n}{19}\right)}n^{13}{(9n)}^{\frac{7.53036\sqrt{n}}{\log(9n)}}.\]
\end{Lemma}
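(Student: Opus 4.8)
The inequality to be established is
\[
{\left(\frac{9^9}{8^8}\right)}^{\frac{17n}{105}}>e^{1.001102\left(\frac{9n}{19}\right)}n^{13}{(9n)}^{\frac{7.53036\sqrt{n}}{\log(9n)}},
\]
which is purely an elementary estimate in a single real variable $n$, so the plan is to take logarithms and reduce everything to a single monotone function of $n$. First I would take the natural logarithm of both sides, obtaining a linear-plus-lower-order inequality of the shape
\[
\frac{17n}{105}\log\!\left(\frac{9^9}{8^8}\right)>\frac{9\cdot1.001102}{19}\,n+13\log n+7.53036\sqrt{n}\cdot\frac{\log(9n)}{\log(9n)}.
\]
Here the last term simplifies to $7.53036\sqrt{n}$ because $\log(9n)$ cancels. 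So after dividing through by $n$ the inequality becomes
\[
\frac{17}{105}\log\!\left(\frac{9^9}{8^8}\right)-\frac{9\cdot1.001102}{19}>\frac{13\log n}{n}+\frac{7.53036}{\sqrt{n}},
\]
and the key structural point is that the left-hand side is a positive constant (this needs a one-line numerical check: $\frac{17}{105}\log(9^9/8^8)\approx 0.1618\cdot 3.2698\approx 0.529$ against $\frac{9\cdot1.001102}{19}\approx 0.4742$, leaving a gap of roughly $0.055$), while the right-hand side is strictly decreasing in $n$ for $n\geq 1$.

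Given that reduction, the proof writes itself in the style already used for Lemma 2.1.2: I state that the sequence of displayed inequalities is equivalent for $n\geq 56833$, note that the right-hand side is decreasing in $n$, and therefore it suffices to verify the single case $n=56833$. At $n=56833$ one computes $\frac{13\log 56833}{56833}+\frac{7.53036}{\sqrt{56833}}$, which is on the order of $0.0025+0.0316\approx 0.034$, comfortably below the constant gap of about $0.055$, so the inequality holds and hence holds for all $n\geq 56833$. I would present this final numerical verification as a one-line bound, e.g. showing the right-hand side is $<0.04<\frac{17}{105}\log(9^9/8^8)-\frac{9\cdot1.001102}{19}$, and close with \qedhere inside the last display.

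The only mild subtlety — and the one place to be careful — is the monotonicity claim: $\frac{13\log n}{n}$ is decreasing only for $n>e$, which is trivially satisfied on the range $n\geq 56833$, and $\frac{7.53036}{\sqrt n}$ is decreasing for all $n\geq 1$, so the sum is decreasing throughout the relevant range. There is no genuine obstacle here; the "hard part," such as it is, is merely bookkeeping the cancellation of $\log(9n)$ in the exponent $\frac{7.53036\sqrt n}{\log(9n)}$ against the factor $(9n)^{(\cdot)}$ correctly when passing to logarithms, and confirming numerically that the constant on the left genuinely exceeds the (small but not negligible) right-hand side at the threshold $n=56833$. I expect the write-up to be three short displayed inequalities plus the single evaluation at $n=56833$, mirroring Lemmas 2.1.1, 2.1.2, and 2.2.9 almost verbatim.
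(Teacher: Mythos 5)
Your overall route is exactly the paper's: take logarithms (the factor $\log(9n)$ cancels against the exponent $\tfrac{7.53036\sqrt{n}}{\log(9n)}$, leaving $7.53036\sqrt{n}$), divide by $n$, observe that $\tfrac{13\log n}{n}+\tfrac{7.53036}{\sqrt{n}}$ is decreasing, and verify the single case $n=56833$. The structure is fine; the problem is your arithmetic, and here it actually matters. You have $\log(9^9/8^8)\approx 3.2698$, but $9\log 9-8\log 8=18\log 3-24\log 2\approx 19.77502-16.63553=3.13949$. Hence the constant on the left is
\[
\frac{17}{105}\log\!\left(\frac{9^9}{8^8}\right)-\frac{9\cdot 1.001102}{19}\approx 0.508298-0.474206=0.0340920,
\]
not $\approx 0.055$. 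Meanwhile at $n=56833$ the right-hand side is $\tfrac{13\log 56833}{56833}+\tfrac{7.53036}{\sqrt{56833}}\approx 0.0025042+0.0315870=0.0340912$. So the verification is not ``comfortable'' at all: it succeeds by less than $10^{-6}$, and the threshold $56833$ is essentially the exact crossover point. In particular, your proposed one-line finish --- showing the right-hand side is $<0.04<\tfrac{17}{105}\log(9^9/8^8)-\tfrac{9\cdot 1.001102}{19}$ --- asserts a false inequality, since the left-hand constant is itself below $0.04$. As written, that step would sink the proof.

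The fix is purely computational: carry enough decimal places in the final check, exactly as the paper does (it compares both sides against the intermediate value $0.0340918$). With that correction your write-up coincides with the paper's proof.
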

\begin{proof}
The following inequalities are equivalent for all $n\geq 56833$:
\[{\left(\frac{9^9}{8^8}\right)}^{\frac{17n}{105}}>e^{1.001102\left(\frac{9n}{19}\right)}n^{13}{(9n)}^{\frac{7.53036\sqrt{n}}{\log(9n)}}\]
\[\frac{17n}{105}\log\left(\frac{9^9}{8^8}\right)-1.001102\left(\frac{9n}{19}\right)>13\log n+7.53036\sqrt{n}\]
\[\frac{17}{105}\log\left(\frac{9^9}{8^8}\right)-1.001102\left(\frac{9}{19}\right)>\frac{13\log n}{n}+\frac{7.53036}{\sqrt{n}}.\]

Now the right-hand side is decreasing in $n$, so it suffices to verify the case when $n=56833$.  When $n=56833$, we obtain
\[\frac{17}{105}\log\left(\frac{9^9}{8^8}\right)>0.0340918>\frac{13\log 56833}{56833}+\frac{7.53036}{\sqrt{56833}}.\qedhere \]
\end{proof}

\begin{Theorem}
For any positive integer $n>4$ there is a prime number between $8n$ and $9n$.
\end{Theorem}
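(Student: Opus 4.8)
The plan is to repeat the argument of Theorem 2.1.3 with $\binom{9n}{8n}$ in place of $\binom{5n}{4n}$, letting the quotient ${9n/2 \brace 4n}$ and the quantities $B_8(n,m)$ take over the roles played there by $A$ and $B$. First one handles the small cases: for $5\le n\le 56832$ it is readily verified that $(8n,9n)$ contains a prime, so assume $n\ge 56833$. Write
\[
\binom{9n}{8n}=T_1T_2T_3,\quad T_1=\prod_{p\le\sqrt{9n}}p^{\beta(p)},\quad T_2=\prod_{\sqrt{9n}<p\le 8n}p^{\beta(p)},\quad T_3=\prod_{8n<p\le 9n}p,
\]
so that, as in Section 2.1, it suffices to prove $T_3=\binom{9n}{8n}/(T_1T_2)>1$. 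The prime decomposition of $\binom{9n}{8n}$ gives $T_1<(9n)^{\pi(\sqrt{9n})}\le(9n)^{7.53036\sqrt n/\log(9n)}$ from the bound $\pi(x)\le 1.25506x/\log x$.

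The substance is the estimate for $T_2$. For $p>\sqrt{9n}$ only the $t=1$ term of $\beta(p)$ survives, so $\beta(p)=[9\{n/p\}]-[8\{n/p\}]\in\{0,1\}$, and $\beta(p)=1$ exactly when $\{n/p\}\in\bigcup_{j=1}^{8}[\tfrac j9,\tfrac j8)$, i.e.\ when $p\in\bigl(\tfrac{8n}{8i+j},\tfrac{9n}{9i+j}\bigr]$ for some integer $i\ge0$ and some $1\le j\le 8$. Such a prime with $p>\tfrac{9n}{19}$ has $i\in\{0,1\}$, so the primes counted in $T_2$ that exceed $\tfrac{9n}{19}$ lie in finitely many explicit intervals (the interval $i=0,\,j=1$ being $(8n,9n]$, i.e.\ $T_3$). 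I would then argue as follows: the primes $p\le\tfrac{9n}{19}$ in $T_2$ contribute at most $\prod_{p\le 9n/19}p<e^{1.001102(9n/19)}$, using $\vartheta(x)<1.001102x$ and $\beta(p)\le1$; for $i=0$ and $j\in\{3,5,7\}$ the interval is the defining interval $(\tfrac{8n}{m},\tfrac{9n}{m}]$ of Lemma 2.2.5, so the product of its primes is at most $B_8(n,m)$; and for every remaining interval (those with $i=1$, and those with $i=0$ and $j$ even) a $p$-adic valuation computation shows that each of its primes divides the positive integer ${9n/2 \brace 4n}$, so the product of those primes is at most ${9n/2 \brace 4n}$. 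Collecting the pieces,
\[
T_2<e^{1.001102(9n/19)}\cdot{9n/2 \brace 4n}\cdot B_8(n,3)\,B_8(n,5)\,B_8(n,7).
\]

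It remains to assemble. Bound $\binom{9n}{8n}$ below by Lemma 2.2.6, ${9n/2 \brace 4n}$ above by Lemma 2.2.8, and $B_8(n,3),B_8(n,5),B_8(n,7)$ above by Lemma 2.2.7. The exponents of $9^9/8^8$ combine as $1-\bigl(\tfrac12+\tfrac16+\tfrac1{10}+\tfrac1{14}\bigr)=\tfrac{17}{105}$ and the powers of $n$ collapse to $n^{-13}$, so
\[
T_3>\frac{C\,(9^9/8^8)^{17n/105}}{n^{13}\,e^{1.001102(9n/19)}\,(9n)^{7.53036\sqrt n/\log(9n)}}
\]
for an explicit constant $C>1$; by Lemma 2.2.9 the right-hand side exceeds $1$ for all $n\ge 56833$. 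Hence $T_3>1$, so $(8n,9n)$ contains a prime.

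The hard part is the valuation bookkeeping in the second step: one must enumerate every interval carrying $\beta(p)=1$ above the cutoff $\tfrac{9n}{19}$ and verify case by case which of ${9n/2 \brace 4n}$, $B_8(n,3)$, $B_8(n,5)$, $B_8(n,7)$ (or the $p\le\tfrac{9n}{19}$ bundle) absorbs its primes, the analogue of the long interval list in the proof of Theorem 2.1.3. The cutoff $\tfrac{9n}{19}$ and the triple $\{3,5,7\}$ are dictated by the requirement that the leftover exponent $\tfrac{17}{105}$ of $9^9/8^8$ stay positive, so confirming these are consistent with the interval structure is the delicate point; recall too, as this section emphasizes, that showing a prime product merely \emph{divides} $B_8(n,m)$ does not suffice, which is why Lemma 2.2.5 is needed to turn it into a genuine upper bound.
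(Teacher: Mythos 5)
Your proposal is correct and follows essentially the same route as the paper's proof: the same $T_1T_2T_3$ decomposition, the same bound $T_2<e^{1.001102(9n/19)}{9n/2\brace 4n}B_8(n,3)B_8(n,5)B_8(n,7)$, and the same assembly via Lemmas 2.2.5--2.2.9. The only difference is presentational: you derive the list of $\beta(p)=1$ intervals systematically from the fractional-part criterion $\{n/p\}\in\bigcup_j[\tfrac{j}{9},\tfrac{j}{8})$, whereas the paper simply enumerates the cases one by one; the resulting bookkeeping is identical.
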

\begin{proof}
It can be easily verified that for $n=5, 6, \ldots, 56832$ there is always a prime between $8n$ and $9n$.  Now let $n\geq 56833$ and consider:
\[\binom{9n}{8n}=\frac{(8n+1)(8n+2)\cdots(9n)}{1\cdot2\cdots n}.\]

The product of primes between $8n$ and $9n$, if there are any, divides $\binom{9n}{8n}$.  Following the notation used in \cite{bachraoui2006,erdos1932,loo2011}, we let
 \[T_{1}={\displaystyle\prod_{p\leq \sqrt{9n}}\mkern-1mu p^{\beta(p)}},\quad T_{2}={\displaystyle\prod_{\sqrt{9n} < p\leq 8n}\mkern-14mu p^{\beta(p)}},\quad\text{and } T_{3}={\displaystyle\prod_{8n+1\leq p\leq 9n}\mkern-16mu p}\]
such that
\[\binom{9n}{8n}=T_{1}T_{2}T_{3}.\]

The prime decomposition of $\binom{9n}{8n}$ implies that the powers in $T_2$ are less than 2, see \cite[p.~24]{erdos2003} for the prime decomposition of $\binom{n}{j}$.  In addition, the prime decomposition of $\binom{9n}{8n}$ yields the upper bound for $T_{1}$:
\[T_{1}<{(9n)}^{\pi(\sqrt{9n})}.\]
See \cite[p.~24]{erdos2003}.  But $\pi(x)\leq\frac{1.25506x}{\log x}$, see \cite{rosser1962}.  Thus
\[T_{1}<{(9n)}^{\pi(\sqrt{9n})}\leq{(9n)}^{\frac{2.51012\sqrt{9n}}{\log(9n)}}.\]

By Lemma 2.2.1, we know that if $n,m\in\mathbb{N}$, $n\geq m\geq2$, and $p$ is a prime number such that $\frac{8n}{m}<p\leq\frac{9n}{m}$, then $p\mid B_8(n,m)$.  So let $m$ satisfy $7\geq m\geq 3$, let $A={9n/2\brace 4n}$, and observe the following for a prime number $p$ in $T_2$:

\noindent$\circ$ If $\frac{9n}{2}<p\leq8n$, then
\[n<p\leq8n<9n<2p.\]
Hence $\beta(p)=0$.

\noindent$\circ$ If $4n<p\leq\frac{9n}{2}$, then $p$ divides $A$.\\
Hence $\displaystyle\prod_{\frac{9n}{2}<p\leq4n}\mkern-10mu p$ divides $A$.

\noindent$\circ$ If $3n<p\leq4n$, then
\[n<p<2p\leq8n<9n<3p.\]
Hence $\beta(p)=0$.

\noindent$\circ$ If $\frac{8n}{3}<p\leq3n$, then $p$ divides $B_8(n,3)$.\\
Hence $\displaystyle\prod_{\frac{8n}{3}<p\leq3n}\mkern-10mu p$ divides $B_8(n,3)$.

\noindent$\circ$ If $\frac{9n}{4}<p\leq\frac{8n}{3}$, then
\[n<p<3p\leq8n<9n<4p.\]
Hence $\beta(p)=0$.

\noindent$\circ$ If $2n<p\leq\frac{9n}{4}$, then
\[\frac{n}{2}<2n<p\leq\frac{9n}{4}<4n<2p\leq\frac{9n}{2}<3p.\]
Hence $\displaystyle\prod_{2n<p\leq\frac{9n}{4}}\mkern-10mu p$ divides $A$.

\noindent$\circ$ If $\frac{9n}{5}<p\leq2n$, then
\[n<p<4p\leq8n<9n<5p.\]
Hence $\beta(p)=0$.

\noindent$\circ$ If $\frac{8n}{5}<p\leq\frac{9n}{5}$, then $p$ divides $B_8(n,5)$.\\
Hence $\displaystyle\prod_{\frac{8n}{5}<p\leq\frac{9n}{5}}\mkern-10mu p$ divides $B_8(n,5)$.

\vspace{18pt}

\noindent$\circ$ If $\frac{3n}{2}<p\leq\frac{8n}{5}$, then
\[n<p<5p\leq8n<9n<6p.\]
Hence $\beta(p)=0$.

\vspace{18pt}

\noindent$\circ$ If $\frac{4n}{3}<p\leq\frac{3n}{2}$, then
\[\frac{n}{2}<\frac{4n}{3}<p\leq\frac{3n}{2}<2p<4n<3p\leq\frac{9n}{2}<4p.\]
Hence $\displaystyle\prod_{\frac{4n}{3}<p\leq\frac{3n}{2}}\mkern-10mu p$ divides $A$.

\vspace{18pt}

\noindent$\circ$ If $\frac{9n}{7}<p\leq\frac{4n}{3}$, then
\[n<p<6p\leq8n<9n<7p.\]
Hence $\beta(p)=0$.

\vspace{18pt}

\noindent$\circ$ If $\frac{8n}{7}<p\leq\frac{9n}{7}$, then $p$ divides $B_8(n,7)$.\\
Hence $\displaystyle\prod_{\frac{8n}{7}<p\leq\frac{9n}{7}}\mkern-10mu p$ divides $B_8(n,7)$.

\vspace{18pt}

\noindent$\circ$ If $\frac{9n}{8}<p\leq\frac{8n}{7}$, then
\[n<p<7p\leq8n<9n<8p.\]
Hence $\beta(p)=0$.

\newpage

\noindent$\circ$ If $n<p\leq\frac{9n}{8}$, then
\[\frac{n}{2}<n<p<3p<4n<4p\leq\frac{9n}{2}<5p.\]
Hence $\displaystyle\prod_{n<p\leq\frac{9n}{8}}\mkern-6mu p$ divides $A$.

\noindent$\circ$ If $\frac{9n}{10}<p\leq n$, then
\[p\leq n<2p<8p\leq8n<9p\leq9n<10p.\]
Hence $\beta(p)=0$.

\noindent$\circ$ If $\frac{4n}{5}<p\leq\frac{9n}{10}$, then
\[\frac{n}{2}<p<4p<4n<5p\leq\frac{9n}{2}<6p.\]
Hence $\displaystyle\prod_{\frac{4n}{5}<p\leq\frac{9n}{10}}\mkern-10mu p$ divides $A$.

\noindent$\circ$ If $\frac{3n}{4}<p\leq\frac{4n}{5}$, then
\[p<n<2p<10p\leq8n<11p<9n<12p.\]
Hence $\beta(p)=0$.

\noindent$\circ$ If $\frac{2n}{3}<p\leq\frac{3n}{4}$, then
\[\frac{n}{2}<p<5p<4n<6p\leq\frac{9n}{2}<7p.\]
Hence $\displaystyle\prod_{\frac{2n}{3}<p\leq\frac{3n}{4}}\mkern-10mu p$ divides $A$.

\noindent$\circ$ If $\frac{9n}{14}<p\leq\frac{2n}{3}$, then
\[p<n<2p<12p\leq8n<13p<9n<14p.\]
Hence $\beta(p)=0$.

\noindent$\circ$ If $\frac{4n}{7}<p\leq\frac{9n}{14}$, then
\[\frac{n}{2}<p<6p<4n<7p\leq\frac{9n}{2}<8p.\]
Hence $\displaystyle\prod_{\frac{4n}{7}<p\leq\frac{9n}{14}}\mkern-10mu p$ divides $A$.

\noindent$\circ$ If $\frac{9n}{16}<p\leq\frac{4n}{7}$, then
\[p<n<2p<14p\leq8n<15p<9n<16p.\]
Hence $\beta(p)=0$.

\noindent$\circ$ If $\frac{n}{2}<p\leq\frac{9n}{16}$, then
\[\frac{n}{2}<p<7p<4n<8p\leq\frac{9n}{2}<9p.\]
Hence $\displaystyle\prod_{\frac{n}{2}<p\leq\frac{9n}{16}}\mkern-8mu p$ divides $A$.

\noindent$\circ$ If $\frac{9n}{19}<p\leq\frac{n}{2}$, then
\[p<2p\leq n<3p<16p\leq8n<17p<18p\leq9n<19p.\]
Hence $\beta(p)=0$.

By the fact that $\displaystyle\prod_{p\leq x}p<e^{1.001102x}$ as shown in \cite{schoenfeld1976}, we obtain\\
$\circ\displaystyle\prod_{\sqrt{9n}<p\leq\frac{9n}{19}}\mkern-16mu p\leq\displaystyle\prod_{p\leq\frac{9n}{19}}p<e^{1.001102(\frac{9n}{19})}.$

By Lemmas 2.2.7 and 2.2.8, we obtain
\begin{align*}
T_{2}&<e^{1.001102(\frac{9n}{19})}{9n/2\brace 4n}B_8(n,3)B_8(n,5)B_8(n,7)\\
&<0.000013e^{1.001102(\frac{9n}{19})}n^{\frac{25}{2}}{\left(\frac{9^9}{8^8}\right)}^{\frac{n}{2}+\frac{n}{6}+\frac{n}{10}+\frac{n}{14}}\\
&=0.000013e^{1.001102(\frac{9n}{19})}n^{\frac{25}{2}}{\left(\frac{9^9}{8^8}\right)}^{\frac{88n}{105}}.
\end{align*}

By Lemma 2.2.6,\vspace{12pt}
\[\binom{9n}{8n}>0.4231409n^{-1/2}{\left(\frac{9^9}{8^8}\right)}^{n}.\]

Thus we obtain
\begin{align*}
T_3&=\binom{9n}{8n}\frac{1}{T_1 T_2}\\
&>\frac{0.4231409}{0.000013}e^{-1.001102(\frac{9n}{19})}n^{-13}{\left(\frac{9^9}{8^8}\right)}^{\frac{17n}{105}}{(9n)}^{-\frac{2.51012\sqrt{9n}}{\log(9n)}}\\
&>e^{-1.001102(\frac{9n}{19})}n^{-13}{\left(\frac{9^9}{8^8}\right)}^{\frac{17n}{105}}{(9n)}^{-\frac{2.51012\sqrt{9n}}{\log(9n)}}\\
&>1,
\end{align*}
where the last inequality follows by Lemma 2.2.9.  Consequently the product $T_3$ of prime numbers between $8n$ and $9n$ is greater than $1$ and therefore the existence of such numbers is proven.
\end{proof}

With the proof of the previous theorem complete we may also show that there is always a prime number between $n$ and $\frac{9n+63}{8}$ for any positive integer $n$ as in the following theorem.

\begin{Theorem}
For any positive integer $n$ there is a prime number between $n$ and $\frac{9n+63}{8}$.
\end{Theorem}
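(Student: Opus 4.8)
The plan is to deduce this from Theorem 2.2.10 by the same device that produced Theorem 2.1.4 from Theorem 2.1.3. First I would dispose of the small values of $n$ directly: for each integer $n$ with $1\le n\le 32$ one checks that the open interval $\left(n,\frac{9n+63}{8}\right)$ already contains a prime (for instance $2\in(1,9)$ when $n=1$, and $37\in(32,43.875)$ when $n=32$), a finite verification. So it remains to treat $n\ge 33$.

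For $n\ge 33$ the idea is to round $n$ up to the nearest multiple of $8$ and invoke Theorem 2.2.10 there. By the division algorithm there is a unique $r\in\{0,1,\dots,7\}$ with $8\mid(n+r)$; write $n+r=8m$ with $m\in\mathbb{N}$, so that $8m$ is the least multiple of $8$ which is $\ge n$. Since $n\ge 33$, we have $8m\ge 40$, hence $m\ge 5>4$, and Theorem 2.2.10 applied to the integer $m$ yields a prime $p$ with $8m<p<9m$, that is, $n+r<p<\frac{9(n+r)}{8}$.

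It then suffices to show $\left(n+r,\tfrac{9(n+r)}{8}\right)\subseteq\left(n,\tfrac{9n+63}{8}\right)$ for $0\le r\le 7$. For the left endpoint this is immediate, since $p>n+r\ge n$. For the right endpoint, from $8(m-1)<n$ (as $8m$ is the least multiple of $8$ that is $\ge n$) we get $8m\le n+7$, hence $m\le\frac{n+7}{8}$ and $9m\le\frac{9(n+7)}{8}=\frac{9n+63}{8}$; therefore $p<9m\le\frac{9n+63}{8}$. Combining the two bounds gives $n<p<\frac{9n+63}{8}$, as required.

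I do not anticipate a genuine obstacle: once Theorem 2.2.10 is in hand, the argument is purely elementary interval bookkeeping. The only point needing a moment's care is fixing the cutoff — Theorem 2.2.10 requires its variable to exceed $4$, i.e.\ $m\ge 5$, equivalently $8m\ge 40$, and since $8m$ is the least multiple of $8$ at least $n$ this holds exactly when $n\ge 33$; the residual values $1\le n\le 32$ are absorbed into the preliminary check above.
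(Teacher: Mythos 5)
Your proof is correct and follows essentially the same route as the paper's: round $n$ up to the nearest multiple of $8$ via the division algorithm, apply Theorem 2.2.10 there, and check the interval containment $\left(n+r,\frac{9(n+r)}{8}\right)\subseteq\left(n,\frac{9n+63}{8}\right)$. In fact you are slightly more careful than the paper, which verifies only $n\in\{1,2,3,4\}$ by hand and then invokes Theorem 2.2.10 for all $n\geq 5$ even though that theorem requires its argument $m=(n+r)/8$ to exceed $4$; your extended base-case check up to $n=32$ cleanly closes that small gap.
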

\begin{proof}
The cases when $n\in\{1,2,3,4\}$ may be verified directly.  Now let $n\geq 5$ be a positive integer.  By the division algorithm $8\mid(n+r)$ for some $r\in\{0,1,2,3,4,5,6,7\}$.  By Theorem 2.2.10 there exists a prime number $p$ such that
\[p\in\left(n+r,\frac{9(n+r)}{8}\right).\]
Since $\left(n+r,\frac{9(n+r)}{8}\right)$ is contained in $\left(n,\frac{9n+63}{8}\right)$ for all $r$ and $n$, $p\in\left(n,\frac{9n+63}{8}\right)$ as desired.
\end{proof}

Since we, in essence, skipped the cases when $k=5,6,$ and $7$ in this chapter, we will show that they readily follow as corollaries of Theorem 2.2.10.

\begin{Corollary}
For any positive integer $n>1$ there is a prime number between $5n$ and $6n$.
\end{Corollary}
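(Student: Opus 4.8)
The plan is to deduce this from Theorem 2.2.11, which already gives a prime in the interval $\left(n,\frac{9n+63}{8}\right)$ for \emph{every} positive integer $n$. The key observation is that applying Theorem 2.2.11 to the integer $5n$ produces a prime $p$ with $5n<p<\frac{9(5n)+63}{8}=\frac{45n+63}{8}$, and the upper endpoint $\frac{45n+63}{8}$ is at most $6n$ as soon as $n$ is large enough; the finitely many small $n$ are then checked by hand.

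First I would dispose of the small cases. For each integer $n$ with $2\le n\le 20$ one exhibits a prime strictly between $5n$ and $6n$ (for instance $11\in(10,12)$, $17\in(15,18)$, $23\in(20,24)$, and so on up through $n=20$), which settles these directly. Then, for $n\ge 21$, I would invoke Theorem 2.2.11 with $5n$ in place of $n$ to obtain a prime $p$ satisfying $5n<p<\frac{45n+63}{8}$, and note that $\frac{45n+63}{8}\le 6n$ is equivalent to $63\le 3n$, i.e. $n\ge 21$. Hence $5n<p<6n$, which completes the proof.

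The only point requiring care — and it is routine — is making sure the two ranges dovetail: one must confirm that the inequality $\frac{45n+63}{8}\le 6n$ holds for every $n\ge 21$ (with equality exactly at $n=21$, so that $p<6n$ remains strict) and that the hand-verification genuinely covers all of $2\le n\le 20$. There is no substantive obstacle here; the statement is essentially a repackaging of Theorem 2.2.11, and the same scheme (choosing an appropriate multiple of $n$ so that the short interval of Theorem 2.2.11 lands inside $(kn,(k+1)n)$) will also yield the cases $k=6$ and $k=7$.
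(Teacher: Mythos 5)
Your proof is correct, but it is routed slightly differently from the paper's. The paper goes back to Theorem 2.2.10 directly: it writes $n=8k+j$ with $j\in\{0,\ldots,7\}$, checks the chain $40k+5j\leq 40k+8j<45k+9j\leq 48k+6j$ (the last step needs $j\leq k$, which forces $n\geq 62$), concludes that $(8(5k+j),9(5k+j))\subset(5n,6n)$, and verifies $2\leq n\leq 61$ by hand. You instead invoke the already-packaged Corollary of that theorem, namely Theorem 2.2.11, at the point $5n$, getting a prime in $\left(5n,\frac{45n+63}{8}\right)$ and observing that $\frac{45n+63}{8}\leq 6n$ exactly when $n\geq 21$ (with equality at $n=21$, which is harmless since the interval is open on the right). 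The two arguments rest on the same underlying $[8m,9m]$ result and are morally the same modular bookkeeping, but yours is the more economical packaging: it avoids redoing the division-algorithm case analysis and shrinks the hand-checked range from $2\leq n\leq 61$ to $2\leq n\leq 20$. Your closing remark that the same scheme handles $k=6$ and $k=7$ is also right, and matches what the paper does in Corollaries 2.2.13 and 2.2.14.
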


\vspace{-12pt}
\begin{proof}
The cases for $2\leq n\leq 61$ may be verified directly.  Let $n\geq 62$.  By the division algorithm $n=8k+j$ for some $k\in\mathbb{N}$ and $j\in\{0,1,\ldots,7\}$.  Observe that
\vspace{-18pt}
\[40k+5j\leq40k+8j<45k+9j\leq48k+6j\]

\vspace*{-16pt}
\noindent and as a consequence

\vspace{-20pt}
\[(8(5k+j),9(5k+j))\subset(5(8k+j),6(8k+j))=(5n,6n).\]

\vspace{-2pt}
By Theorem 2.2.10, $p\in(8(5k+j),9(5k+j))$ and therefore $p\in(5n,6n)$.
\end{proof}

\begin{Corollary}
For any positive integer $n>4$ there is a prime number between $6n$ and $7n$.
\end{Corollary}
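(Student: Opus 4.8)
The plan is to deduce this corollary from Theorem 2.2.10, which guarantees a prime in $(8m,9m)$ for every integer $m>4$, exactly as Corollary 2.2.12 was obtained. The key point is purely arithmetic: since $9/8<7/6$, an interval of the form $(8m,9m)$ is short enough to be squeezed inside $(6n,7n)$. By contrast, neither Theorem 2.1.3 (whose intervals have ratio $5/4$) nor Corollary 2.2.12 (ratio $6/5$) can be used this way, because both ratios exceed $7/6$; so Theorem 2.2.10 is precisely the available tool.

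First I would dispose of the small cases by a direct check: $(6n,7n)$ contains a prime for every integer $n$ with $5\le n\le 23$ (for instance $31\in(30,35)$, $37\in(36,42)$, and so on up to $n=23$).

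For $n\ge 24$, write $n=4k+j$ by the division algorithm with $k\in\mathbb{N}$ and $j\in\{0,1,2,3\}$. Then $4k=n-j\ge 21$ forces $k\ge 6$, hence $k\ge 2j$. Put $m=3k+j$. A short computation gives $8m=24k+8j=6n+2j\ge 6n$ and $9m=27k+9j=7n-(k-2j)\le 7n$, so $(8m,9m)\subseteq(6n,7n)$. Since also $m=3k+j\ge 18>4$, Theorem 2.2.10 produces a prime $p\in(8m,9m)$, and therefore $p\in(6n,7n)$, as required.

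The only delicate point is arranging the inclusion $(8m,9m)\subseteq(6n,7n)$ with an integer $m$ uniformly in $n$: this is what dictates splitting $n$ into residue classes modulo $4$, choosing the multiplier $m=3k+j$, and carrying along the side condition $k\ge 2j$ (which in turn fixes where the finite verification must stop, namely at $n=23$). Once that bookkeeping is in place, the corollary follows immediately from Theorem 2.2.10.
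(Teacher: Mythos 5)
Your proof is correct and takes essentially the same route as the paper: both deduce the result from Theorem 2.2.10 by using the division algorithm to produce an integer $m$ with $(8m,9m)\subseteq(6n,7n)$. The only difference is bookkeeping — the paper writes $n=8k+j$ and takes $m=6k+j$ (requiring direct verification up to $n=62$), whereas your choice $n=4k+j$, $m=3k+j$ needs the side condition $k\geq 2j$ and so only requires checking up to $n=23$.
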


\vspace{-18pt}
\begin{proof}
The cases for $5\leq n\leq 62$ may be verified directly.  Let $n\geq 63$.  By the division algorithm $n=8k+j$ for some $k\in\mathbb{N}$ and $j\in\{0,1,\ldots,7\}$.  Observe that
\[48k+6j\leq48k+8j<54k+9j\leq56k+7j\]
and as a consequence
\[(8(6k+j),9(6k+j))\subset(6(8k+j),7(8k+j))=(6n,7n).\]

By Theorem 2.2.10, $p\in(8(6k+j),9(6k+j))$ and therefore $p\in(6n,7n)$ as desired.
\end{proof}

\begin{Corollary}
For any positive integer $n>2$ there is a prime number between $7n$ and $8n$.
\end{Corollary}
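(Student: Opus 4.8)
The plan is to obtain this corollary in exactly the same way that Corollaries 2.2.12 and 2.2.13 were obtained from Theorem 2.2.10: by a linear change of variables driven by the division algorithm, together with a direct check of the small cases. First I would verify by direct computation that there is a prime between $7n$ and $8n$ for every integer $n$ with $3\le n\le 63$ (exhibiting one witness prime in each interval suffices).

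Next, let $n\ge 64$ and use the division algorithm to write $n=8k+j$ with $k\in\mathbb{N}$ and $j\in\{0,1,\dots,7\}$; then $k=[n/8]\ge 8$, so in particular $j\le k$. The key step is the chain of inequalities
\[7(8k+j)=56k+7j\le 56k+8j<63k+9j\le 64k+8j=8(8k+j),\]
in which the first inequality is immediate, the middle one holds since $k\ge 1$, and the last one holds precisely because $j\le k$. Reading off the endpoints, this shows
\[\bigl(8(7k+j),\,9(7k+j)\bigr)\subset\bigl(7(8k+j),\,8(8k+j)\bigr)=(7n,8n).\]

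Finally, since $7k+j\ge 56>4$, Theorem 2.2.10 supplies a prime $p$ with $8(7k+j)<p<9(7k+j)$, and by the containment just established $p\in(7n,8n)$, which completes the argument. There is no deep obstacle here; the argument is essentially a routine reindexing, and the only point requiring care is the choice of threshold: the inequality $63k+9j\le 64k+8j$ needs $j\le k$, which can fail for small $n$ (for instance $n=55$ gives $k=6$, $j=7$). Any bound $n\ge 56$ already forces $j\le k$, so taking $n\ge 64$ and disposing of the finitely many remaining cases by direct computation is comfortably sufficient.
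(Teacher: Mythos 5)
Your proposal is correct and matches the paper's own proof essentially verbatim: the same decomposition $n=8k+j$, the same chain $56k+7j\le 56k+8j<63k+9j\le 64k+8j$ giving the containment $(8(7k+j),9(7k+j))\subset(7n,8n)$, an appeal to Theorem 2.2.10, and direct verification for $3\le n\le 63$. The only difference is that you spell out the justification of each inequality and the applicability of Theorem 2.2.10 slightly more explicitly, which the paper leaves implicit.
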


\vspace{-18pt}
\begin{proof}
The cases for $3\leq n\leq 63$ may be verified directly.  Let $n\geq 64$.  By the division algorithm $n=8k+j$ for some $k\in\mathbb{N}$ and $j\in\{0,1,\ldots,7\}$.  Observe that
\[56k+7j\leq56k+8j<63k+9j\leq64k+8j\]
and as a consequence
\[(8(7k+j),9(7k+j))\subset(7(8k+j),8(8k+j))=(7n,8n).\]

By Theorem 2.2.10, $p\in(8(7k+j),9(7k+j))$ and therefore $p\in(7n,8n)$ as desired.
\end{proof}
\section{Primes in the Interval [519$n$, 520$n$]}
\label{sec:Primes519n520n}

We will now move away from the elementary methods used in the previous two sections and move towards an analytic number theory approach to establish an improved result.  Our proof conforms to S. Ramanujan's \cite{ramanujan1919} proof of Bertrand's postulate.  It also conforms with J. Nagura's \cite{nagura1952} proof of primes in the interval $n$ to $\frac{6n}{5}$.

The basis of our proof is to approximate the first Chebyshev function $\vartheta$ using the second Chebyshev function $\psi$.  That is, the functions:
\[\psi(x)=\sum_{m=1}^{\infty}\vartheta(x^{1/m}) \,\,\,\,\,\text{and}\,\,\,\,\, \vartheta(x)=\sum_{\substack{p\leq x\\p\text{ prime}}}\log p.\]

If we are able to show that $\vartheta(\frac{520n}{519})-\vartheta(n)>0$, then by taking both sides to the base $e$, we obtain:
\[\prod_{\substack{p\leq \frac{520n}{519}\\ p\text{ prime}}}\mkern-4mu p>\prod_{\substack{p\leq n\\ p\text{ prime}}}\mkern-4mu p.\]
However, then the product of primes between $n$ and $\frac{520n}{519}$ is greater than 1, and so there must be at least one prime number between $n$ and $\frac{520n}{519}$.

In 1976, L. Schoenfeld \cite{schoenfeld1976} showed that for all $x\geq e^{19}$, the upper and lower bounds of $\psi(x)$ are given by the inequality $0.99903839x<\psi(x)<1.00096161x$.  In his paper he achieved these approximations by using analytic methods to show that $\mid\psi(x)-x\mid<0.00096161x$ from which the double inequality follows.  We will use this approximation in the following theorem.

\begin{Theorem}
For $n\geq 31409$, there exists at least one prime number $p$ such that $n<p<\frac{520n}{519}$.
\end{Theorem}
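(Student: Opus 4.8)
The plan is to deduce the existence of a prime in $(n,\tfrac{520n}{519})$ from the Chebyshev inequality $\vartheta(\tfrac{520n}{519})-\vartheta(n)>0$, following the template of Ramanujan and Nagura. Once this inequality holds, exponentiating gives $\prod_{n<p\le 520n/519}p>1$, so a prime $p$ with $n<p\le\tfrac{520n}{519}$ exists; and since $\tfrac{520n}{519}$ is an integer only when $519\mid n$, in which case it equals the even number $520n/519>2$, that prime actually satisfies $p<\tfrac{520n}{519}$. So everything reduces to bounding $\vartheta(\tfrac{520n}{519})-\vartheta(n)$ below.

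For that I would pass through the second Chebyshev function using $\psi(x)=\sum_{m\ge 1}\vartheta(x^{1/m})$, so that $0\le\psi(x)-\vartheta(x)=\sum_{m\ge 2}\vartheta(x^{1/m})$ --- a sum of at most $\log_2 x$ terms, each at most $\vartheta(\sqrt x)$. Writing
\[
\vartheta\!\Bigl(\tfrac{520n}{519}\Bigr)-\vartheta(n)=\Bigl[\psi\!\Bigl(\tfrac{520n}{519}\Bigr)-\psi(n)\Bigr]-\sum_{m\ge 2}\Bigl[\vartheta\bigl((\tfrac{520n}{519})^{1/m}\bigr)-\vartheta\bigl(n^{1/m}\bigr)\Bigr],
\]
the right-hand side splits into a main term and a nonnegative prime-power correction. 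For the main term I would invoke Schoenfeld's explicit bounds $(1-\varepsilon)x<\psi(x)<(1+\varepsilon)x$, using the lower bound at $\tfrac{520n}{519}$ and the upper bound at $n$:
\[
\psi\!\Bigl(\tfrac{520n}{519}\Bigr)-\psi(n)>n\Bigl[(1-\varepsilon)\tfrac{520}{519}-(1+\varepsilon)\Bigr]=\tfrac{n}{519}\bigl(1-1039\,\varepsilon\bigr),
\]
which is a positive multiple of $n$ exactly because Schoenfeld's constant lies just below $\tfrac1{1039}$. For the correction, concavity of $s\mapsto s^{1/m}$ gives $(\tfrac{520n}{519})^{1/m}-n^{1/m}\le\tfrac{n^{1/m}}{519m}$, so $\vartheta\bigl((\tfrac{520n}{519})^{1/m}\bigr)-\vartheta\bigl(n^{1/m}\bigr)\le\tfrac{\log(520n/519)}{m}\bigl(\tfrac{n^{1/m}}{519m}+1\bigr)$, and summing over $m\le\log_2(\tfrac{520n}{519})$ bounds the whole correction by an explicit quantity of order $\sqrt n\log n$.

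Putting the two estimates together, the theorem collapses to a single inequality in the real variable $n$ whose left side grows linearly and whose right side grows only like $\sqrt n\log n$; after dividing by $n$ one checks that the relevant expression is monotone and verifies it at $n=31409$, exactly as in the closing numerical steps of Lemmas 2.1.2 and 2.2.9. The main obstacle will be the extreme narrowness of the interval: its relative width is only $\tfrac1{519}$, so the main term $\psi(\tfrac{520n}{519})-\psi(n)$ is barely $\tfrac{n}{519}(1-1039\varepsilon)$, and the argument survives only because Schoenfeld's relative error sits just under the critical value $\tfrac1{1039}$ --- there is essentially no slack, and any weaker Chebyshev bound would make the main term negative. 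The delicate points are therefore (i) keeping the prime-power correction of genuinely smaller order than this already-tiny linear main term, which forces one to exploit that the intervals $\bigl(n^{1/m},(\tfrac{520n}{519})^{1/m}\bigr]$ are extremely short, and (ii) carrying enough numerical precision through the final verification that the inequality actually closes at the stated threshold.
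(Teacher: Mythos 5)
Your overall strategy is the same as the paper's: reduce the claim to $\vartheta(\tfrac{520n}{519})-\vartheta(n)>0$, extract a linear main term from Schoenfeld's explicit bounds on $\psi$, control the prime-power correction separating $\vartheta$ from $\psi$, and finish numerically. Your treatment of the correction (the exact identity $\vartheta(x)=\psi(x)-\sum_{m\ge2}\vartheta(x^{1/m})$ together with an elementary count of the primes in the short intervals $\bigl(n^{1/m},(\tfrac{520n}{519})^{1/m}\bigr]$) is if anything cleaner than the paper's, which subtracts $\psi(x^{1/p})$ over primes $p\le 503$ resp.\ $509$ and then applies Schoenfeld's linear bounds to $\psi$ at arguments far below the range where those bounds are established. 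Your observation that the endpoint $\tfrac{520n}{519}$ is never prime is also a correct detail the paper glosses over.

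The gap is in your final step. You propose to divide by $n$, use monotonicity, and ``verify it at $n=31409$.'' That cannot work, for two reasons. First, Schoenfeld's bound $|\psi(x)-x|<0.00096161x$ is only valid for $x\ge e^{19}\approx 1.78\times 10^{8}$, so your main-term estimate is unavailable at $n=31409$; and you cannot fall back on a weaker bound valid for smaller $x$, because the entire argument hinges on $\varepsilon<\tfrac{1}{1039}\approx 0.00096246$, leaving essentially no room to enlarge $\varepsilon$. Second, even granting the estimate, the inequality is false at $n=31409$: the main term is about $1.7\times 10^{-6}\,n\approx 0.054$ there, while the $m=2$ correction alone is about $\tfrac12\log\bigl(\tfrac{520n}{519}\bigr)\bigl(\tfrac{\sqrt n}{1038}+1\bigr)\approx 6$. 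Your analytic inequality only closes for $n$ of order $10^{8}$, and Schoenfeld forces $n\ge e^{19}$ in any case. The threshold $31409$ does not come out of the analysis at all: as in the paper, the range $31409\le n\le e^{19}$ must be closed by a direct computational check that each interval $\bigl(n,\tfrac{520n}{519}\bigr)$ contains a prime (equivalently, that consecutive primes up to about $1.8\times 10^{8}$ have ratio below $\tfrac{520}{519}$). With that computational step added, your argument goes through.
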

\begin{proof}
In order to prove $\vartheta(\frac{520n}{519})-\vartheta(n)>0$ for the values of $n$ as small as possible, let us use

\vspace{-40pt}
\begin{equation}
\psi(x)-\psi(x^{1/2})-\psi(x^{1/3})-\cdots-\psi(x^{1/503})=\psi(x)-\sum_{p\leq 503}\psi(x^{1/p})\geq\vartheta(x)
\end{equation}

\vspace{-20pt}
\noindent and

\vspace{-40pt}
\begin{equation}
\vartheta(x)\geq\psi(x)-\psi(x^{1/2})-\psi(x^{1/3})-\cdots-\psi(x^{1/509})=\psi(x)-\sum_{p\leq 509}\psi(x^{1/p}).
\end{equation}
Note that we chose 503 and 509 as the summation upper limit in the summations of equations 2.1 and 2.2, respectively.  These choices are the second largest prime and the largest prime less than 519, respectively.

Thus we obtain
\[\vartheta\left(\frac{520n}{519}\right)-\vartheta(n)\geq \psi\left(\frac{520n}{519}\right)-\sum_{p\leq 509}\psi\left({\left(\frac{520n}{519}\right)}^{1/p}\right)-\psi(n)+\sum_{p\leq 503}\psi(n^{1/p}).\]

By using the approximation for $\psi(x)$,
\[0.99903839x<\psi(x)<1.00096161x,\]
we obtain
\begin{align*}
\vartheta\left(\frac{520n}{519}\right)-\vartheta(n)>&\,0.99903839\left(\frac{520n}{519}+\sum_{p\leq503}n^{1/p}\right)\\
&-1.00096161\left(n+\sum_{p\leq509}{\left(\frac{520n}{519}\right)}^{1/p}\right)
\end{align*}
which is positive for $n\geq e^{19}$.

However, we may verify the cases for $31409\leq n\leq e^{19}$ using a program such as Mathematica and our theorem is thus proved.
\end{proof}

From the previous theorem we may prove a corollary that $\vartheta(\frac{(k+1)n}{k})-\vartheta(n)>0$ for all $k$ and $n$ such that $n\geq 31409$ and $519\geq k\geq 1$.  We will also apply this corollary to show that there is a prime number between $519n$ and $520n$ for all $n\geq 15$.  This theorem also shows that there is always a prime between $kn$ and $(k+1)n$ for all $n\geq k$ and $519\geq k\geq 2$ which is a significant improvement in the number of cases for $k$ that we were able to show in Sections 2.1 and 2.2.

\begin{Corollary}
For $k,n\in\mathbb{N}$ with $n\geq 31409$ and $519\geq k\geq 1$,
\[\vartheta\left(\frac{(k+1)n}{k}\right)-\vartheta(n)>0.\]
\end{Corollary}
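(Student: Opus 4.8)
The plan is to deduce this corollary directly from Theorem 2.3.1 by noting that, among all integers $k$ with $1\le k\le 519$, the interval $\left(n,\frac{(k+1)n}{k}\right)$ is narrowest exactly at $k=519$. Thus a single prime produced by Theorem 2.3.1 simultaneously settles every case: when $k=519$ the interval is literally $\left(n,\frac{520n}{519}\right)$, and for $k<519$ the interval only gets wider.

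First I would record the elementary inequality $\frac{k+1}{k}\ge\frac{520}{519}$ for every integer $k$ with $1\le k\le 519$; this is equivalent to $519(k+1)\ge 520k$, i.e. $519\ge k$, which holds by hypothesis. Hence $\frac{520n}{519}\le\frac{(k+1)n}{k}$ for all such $k$ and all $n\ge 1$. Next, since $n\ge 31409$, Theorem 2.3.1 furnishes a prime $p$ with $n<p<\frac{520n}{519}$, and combining this with the previous inequality gives $n<p<\frac{(k+1)n}{k}$, so in particular $p\le\frac{(k+1)n}{k}$.

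Finally, since $\vartheta(x)=\sum_{q\le x}\log q$ is nondecreasing and the term $\log p$ is counted in $\vartheta\!\left(\frac{(k+1)n}{k}\right)$ but not in $\vartheta(n)$, we obtain
\[
\vartheta\!\left(\frac{(k+1)n}{k}\right)-\vartheta(n)\ \ge\ \log p\ >\ 0,
\]
which is the assertion. There is essentially no obstacle here: all of the analytic content — Schoenfeld's bounds on $\psi$, the truncated sums $\psi(x)-\sum_{p\le 503}\psi(x^{1/p})$ and $\psi(x)-\sum_{p\le 509}\psi(x^{1/p})$ bracketing $\vartheta(x)$, and the machine verification for $31409\le n\le e^{19}$ — was already carried out in the proof of Theorem 2.3.1. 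The only new ingredient is the monotonicity remark that $\frac{k+1}{k}$ decreases in $k$, which pins down $k=519$ as the extremal case, and once that is noted the corollary is immediate.
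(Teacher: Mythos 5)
Your proof is correct and takes essentially the same route as the paper: both arguments observe that $\frac{(k+1)n}{k}$ is smallest at $k=519$, use the monotonicity of $\vartheta$ to reduce every case to the interval $\left(n,\frac{520n}{519}\right)$, and then invoke Theorem 2.3.1. The only cosmetic difference is that you extract an explicit prime $p$ and bound the difference below by $\log p$, whereas the paper simply chains the inequalities $\vartheta(2n)\geq\cdots\geq\vartheta(520n/519)$ and cites the theorem for the final strict inequality.
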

\begin{proof}
The inequality follows directly by noting that for all $n\geq 31409$,
\[\vartheta(2n)\geq\vartheta(3n/2)\geq\vartheta(4n/3)\geq\ldots\geq\vartheta(519n/518)\geq\vartheta(520n/519).\]
Hence
\[\vartheta(2n)-\vartheta(n)\geq\vartheta(3n/2)-\vartheta(n)\geq\ldots\geq\vartheta(520n/519)-\vartheta(n)>0\]
where the last inequality follows by Theorem 2.3.1.
\end{proof}

\begin{Theorem}
For $n\geq 15$ there is a prime number between $519n$ and $520n$.
\end{Theorem}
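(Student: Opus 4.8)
The plan is to deduce Theorem~2.3.3 from Corollary~2.3.2 by the same interval-stretching trick used to pass from Theorem~2.1.3 to Theorem~2.1.4 and from Theorem~2.2.10 to Theorem~2.2.11. Corollary~2.3.2 (with $k=519$) tells us that $\vartheta(520n/519)-\vartheta(n)>0$ for every integer $n\geq 31409$; equivalently, exponentiating, there is a prime in the open interval $\left(n,\tfrac{520n}{519}\right)$ for all such $n$. The goal is to turn ``a prime just above $n$'' into ``a prime in $(519n,520n)$'' by applying this with $n$ replaced by a suitable multiple of $519$.

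First I would dispose of the small cases: for $15\le n\le N_0$ (where $N_0$ is chosen so that $519\,N_0$ comfortably exceeds the threshold $31409$, i.e. any $N_0\ge 61$ suffices, but to be safe one takes $N_0$ a bit larger, say $N_0 = 100$ or whatever makes the next step's divisibility argument clean) one simply checks by direct computation — e.g.\ in Mathematica — that there is a prime between $519n$ and $520n$. For the remaining $n$, write, via the division algorithm, $n = 519q + r$ with $0\le r\le 518$, so that $519n = 519(519q+r)$; the idea is to apply Corollary~2.3.2 to the integer $m = 519q + $ (something), obtaining a prime $p$ with $m < p < \tfrac{520m}{519}$, and then to verify the two containments $519n < m$ and $\tfrac{520m}{519} < 520n$, so that $p\in(519n,520n)$.

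The one delicate point — and the step I expect to be the main obstacle — is choosing the multiplier correctly so that the stretched interval $\left(m,\tfrac{520m}{519}\right)$ lands \emph{inside} $(519n,520n)$ rather than merely overlapping it, while simultaneously keeping $m\ge 31409$ so Corollary~2.3.2 applies. The natural choice is $m = 519n$ itself: then $m\ge 519\cdot 15 = 7785$, which is \emph{not} yet past $31409$, so the elementary bound alone does not cover $15\le n\le 60$ or so; that is precisely why those cases must be checked by hand, and one must be careful to push the direct-verification range far enough (up to $n = 60$, since $519\cdot 61 = 31659 > 31409$, or slightly further for comfort). With $m=519n$ the stretched interval is $\left(519n,\ 520n\right)$ on the nose, so the containment is automatic — there is nothing to prove beyond applying Theorem~2.3.1 (or Corollary~2.3.2 with $k=519$) to $m=519n\ge 31659$. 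So the structure is: (i) verify $15\le n\le 60$ computationally; (ii) for $n\ge 61$, set $m=519n\ge 31659\ge 31409$, invoke Corollary~2.3.2 to get a prime $p$ with $519n < p < \tfrac{520\cdot 519n}{519} = 520n$; (iii) conclude. The only thing to watch is the exact cutoff where $519n$ crosses $31409$, which fixes the boundary between the computational check and the analytic argument.
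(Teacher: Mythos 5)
Your proposal is correct and follows essentially the same route as the paper: verify the small cases directly, then apply Theorem 2.3.1/Corollary 2.3.2 with the variable replaced by $519n$ so that the interval $\left(519n,\tfrac{520\cdot 519n}{519}\right)=(519n,520n)$ is obtained on the nose. Your accounting of the cutoff ($519\cdot 61=31659\geq 31409$, so only $15\leq n\leq 60$ need direct verification) is in fact tighter than the paper's, which checks the much larger range $15\leq n\leq 31408$ before substituting $n=519m$.
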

\begin{proof}
The cases for $15\leq n\leq 31408$ may be verified directly.  Now let $n\geq 31409$ and consider $\vartheta(\frac{520n}{519})-\vartheta(n)>0$ as shown in Corollary 2.3.2.  Taking both sides of the inequality to the base $e$, we obtain
\[e^{\vartheta(\frac{520n}{519})-\vartheta(n)}=\prod_{n<p\leq\frac{520n}{519}}\mkern-10mu p>1.\]

Therefore there exists at least one prime number between $n$ and $\frac{520n}{519}$.  Allowing $n=519m$ for some $m\in\mathbb{N}$, we deduce that there exists at least one prime number between $519m$ and $520m$ as desired.
\end{proof}

\begin{Theorem}
For $k,n\in\mathbb{N}$ with $n\geq k$ and $519\geq k\geq 2$, there is at least one prime number between $kn$ and $(k+1)n$.
\end{Theorem}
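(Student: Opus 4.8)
The plan is to combine Corollary 2.3.2 with an explicit verification of small cases, mirroring the structure of Theorem 2.3.3. The statement asserts a prime in $(kn,(k+1)n)$ for every pair $(k,n)$ with $2\le k\le 519$ and $n\ge k$. The natural split is: first dispose of the ``large $n$'' regime analytically, then check the finitely many remaining pairs by direct computation.

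\textbf{Step 1 (large $n$).} Fix $k$ with $2\le k\le 519$ and suppose $n\ge 31409$. By Corollary 2.3.2 we have $\vartheta\!\left(\tfrac{(k+1)n}{k}\right)-\vartheta(n)>0$. Since $k\ge 2$ gives $\tfrac{(k+1)n}{k}\le\tfrac{3n}{2}< kn$ only when $k$ is small — wait, that is not the right comparison; the point is rather that $\tfrac{(k+1)n}{k}\le \tfrac{3n}{2}$, so a prime in $(n,\tfrac{(k+1)n}{k})$ need not lie in $(kn,(k+1)n)$. The correct move is the one used in Theorem 2.3.3: apply the corollary with $n$ replaced by $kn$. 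That is, $\vartheta\!\left(\tfrac{(k+1)(kn)}{k}\right)-\vartheta(kn)=\vartheta((k+1)n)-\vartheta(kn)>0$ provided $kn\ge 31409$. Exponentiating, $\prod_{kn<p\le (k+1)n}p>1$, so there is a prime in $(kn,(k+1)n)$. Thus the statement holds whenever $kn\ge 31409$, i.e. whenever $n\ge \lceil 31409/k\rceil$.

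\textbf{Step 2 (small $n$).} It remains to handle the pairs $(k,n)$ with $2\le k\le 519$ and $k\le n< \lceil 31409/k\rceil$. For each fixed $k$ this is a finite range of $n$ (at most about $31409/k$ values, and for $k$ near $519$ only a handful, namely $n$ from $k$ up to about $60$), so the total number of pairs is finite — on the order of $\sum_{k=2}^{519}31409/k \approx 31409\ln 519 \approx 2\times 10^5$ pairs. Each is settled by exhibiting a prime between $kn$ and $(k+1)n$; this is a routine finite check by computer (as invoked repeatedly elsewhere in the paper, e.g. Theorems 2.1.3, 2.2.10, 2.3.3).

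\textbf{The main obstacle} is purely bookkeeping rather than mathematical: one must be careful that the analytic input (Corollary 2.3.2, which requires its argument $\ge 31409$) is applied to $kn$, not to $n$, so that the guaranteed prime lands in the target interval $(kn,(k+1)n)$ and not merely in $(n,\tfrac{(k+1)n}{k})$. Once that substitution is in place, the analytic regime $n\ge \lceil 31409/k\rceil$ is immediate, and the complementary finite range — although large in aggregate — poses no difficulty beyond a computer search. No new estimates are needed; the theorem is essentially a corollary of Corollary 2.3.2 together with a finite verification.
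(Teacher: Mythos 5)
Your proposal is correct and follows essentially the same route as the paper: the paper likewise applies the analytic result at the point $kn$ (phrased there as substituting $n=km$ into the chain $\pi(2n)-\pi(n)\geq\cdots\geq\pi(520n/519)-\pi(n)\geq1$ from Theorem 2.3.1) and disposes of the remaining small cases by direct verification. Your only deviation is a slightly sharper bookkeeping of which small cases actually need checking ($n<\lceil 31409/k\rceil$ rather than all $n\leq 31408$), which is harmless.
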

\begin{proof}
For any choice of $k$ such that $2\leq k\leq 519$, the cases for $k\leq n\leq 31408$ may be verified directly.  By Theorem 2.3.1, for all $n\geq 31409$, $\pi(\frac{520n}{519})-\pi(n)\geq 1$.  Now,
\[\pi(2n)\geq\pi(3n/2)\geq\pi(4n/3)\geq\ldots\geq\pi(520n/519)\geq\pi(n)+1.\]
Therefore,
\begin{equation}
\pi(2n)-\pi(n)\geq\pi(3n/2)-\pi(n)\geq\ldots\geq\pi(520n/519)-\pi(n)\geq1.
\end{equation}
Now by letting $n=m$, $n=2m$, $\ldots$, $n=519m$ for some $m\in\mathbb{N}$ in the respective inequalities above, we obtain $\pi(2m)-\pi(m)\geq 1$, $\pi(3m)-\pi(2m)\geq 1$, $\ldots$, $\nolinebreak{\pi(520m)-\pi(519m)\geq 1}$ as desired.
\end{proof}

From equation (2.3) in the previous theorem we obtain a direct corollary.
\begin{Corollary}
For $k,n\in\mathbb{N}$ with $n\geq k$ and $519\geq k\geq 2$, there is at least one prime number between $n$ and $\frac{(k+1)n}{k}$.
\end{Corollary}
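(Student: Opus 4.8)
The plan is to read the statement off the inequality chain (2.3) from the proof of Theorem 2.3.4, together with a finite check for small $n$, exactly as in the preceding arguments of this section.

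First I would recall (2.3): for every integer $n\geq 31409$,
\[
\pi(2n)-\pi(n)\geq\pi(3n/2)-\pi(n)\geq\cdots\geq\pi(520n/519)-\pi(n)\geq 1.
\]
As $k$ runs over $2,3,\dots,519$ the numbers $\tfrac{(k+1)n}{k}$ run over $\tfrac{3n}{2},\tfrac{4n}{3},\dots,\tfrac{520n}{519}$, so for each such $k$ the difference $\pi(\tfrac{(k+1)n}{k})-\pi(n)$ is one of the terms in this chain and is therefore at least $1$. Hence for $n\geq 31409$ there is a prime $p$ with $n<p\leq\tfrac{(k+1)n}{k}$. To turn the weak inequality on the right into a strict one, I would observe that when $n>k$ the endpoint $\tfrac{(k+1)n}{k}$ is never prime: if $k\nmid n$ it is not even an integer, while if $k\mid n$ it equals $(k+1)\cdot\tfrac{n}{k}$, a product of two integers each at least $2$, hence composite. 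Since $k\leq 519<31409\leq n$ in this range, the observation applies and $p$ lies strictly between $n$ and $\tfrac{(k+1)n}{k}$.

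Next I would dispose of the remaining pairs. For each fixed $k$ with $2\leq k\leq 519$, the integers $n$ with $k\leq n\leq 31408$ form a finite set, and one verifies directly — with a computer algebra system, just as in the proof of Theorem 2.3.1 — that the conclusion holds for these $n$ as well. Combined with the previous paragraph, this establishes the corollary.

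I do not expect a genuine obstacle here: the corollary is little more than a restatement of (2.3), and the only real work is the bounded computation for $k\leq n\leq 31408$, which is of the same nature as, and no larger than, the verification already carried out for Theorem 2.3.1. The one point that needs attention is that Theorem 2.3.1, and hence the monotone chain (2.3), is guaranteed only for $n\geq 31409$, so the small-$n$ range must be treated separately rather than by appeal to (2.3).
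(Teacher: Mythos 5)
For $n\geq 31409$ your argument is sound and is essentially the paper's own: the paper offers nothing beyond ``this is a direct corollary of (2.3),'' and you correctly read each difference $\pi\bigl(\tfrac{(k+1)n}{k}\bigr)-\pi(n)\geq 1$ off that chain; your extra observation that the endpoint $\tfrac{(k+1)n}{k}$ is never prime (non-integral, or a product $(k+1)\cdot\tfrac{n}{k}$ of two factors $\geq 2$) is a genuine improvement in care over the paper, which ignores the open-versus-closed issue entirely.

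The gap is in the step you defer to computation. You assert that for each $k$ the finitely many cases $k\leq n\leq 31408$ ``verify directly,'' and that you expect no obstacle; but that verification would in fact fail, because the corollary as stated is false in that range. At $n=k$ the interval $\bigl(n,\tfrac{(k+1)n}{k}\bigr)=(k,k+1)$ contains no integer at all, so for instance $n=k=8$ is an outright counterexample; and the failure is not confined to $n=k$, e.g.\ $k=12$, $n=24$ gives the interval $(24,26)$, whose only integer is $25$. The interval has length $n/k$, so for $n$ not much larger than $k$ there is no room for a prime, and no amount of checking will rescue the statement with the hypothesis $n\geq k$. This defect is inherited from the paper --- equation (2.3) is only established for $n\geq 31409$, and the paper silently extends the conclusion to all $n\geq k$ --- but your write-up makes the same unjustified leap at the moment you claim the bounded computation succeeds. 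A correct version of the corollary must either restrict to $n\geq 31409$ (where your argument is complete) or replace the lower bound on $n$ by something that has actually been checked.
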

\section{Primes between $n^2$ and $(n+1)^{2+\varepsilon}$}
\label{sec:PrimesNsquared}
If we allow $k=n$ in our previous question of a prime number $p$ satisfying $\nolinebreak{kn<p<(k+1)n}$, then we obtain a sharper inequality than Legendre's conjecture.  Legendre's conjecture states that for every positive integer $n$ there exists a prime number between $n^2$ and $(n+1)^2$.  That is, there is always a prime number between any two successive perfect squares.  This question is one of the famous Landau problems proposed by Edmund Landau in 1912 at the International Congress of Mathematicians, see \cite{hardy1979}.

While this question remains open to date, some progress has been made for sufficiently large $n$.  Perhaps most notably, Chen Jingrun \cite{chen1975} has shown that there exists a number $p$ satisfying $\nolinebreak{n^2<p<(n+1)^2}$ such that $p$ is either a prime number or a semiprime; where a semiprime is a product of two prime numbers, not necessarily distinct.  Furthermore, there is always a prime number between $n-n^{\theta}$ and $n$ for $\theta=23/42$ and $\theta=0.525$, see \cite[p.~415]{hardy1979}, \cite{iwaniec1984}, and \cite{baker2001}.

While we offer no proof of Legendre's conjecture, we do show that for all positive integers $n$ there exists a prime number $p$ such that $n^2<p<{(n+1)}^{2+\varepsilon}$ for $\nolinebreak{\varepsilon=0.00011516865557559264}$ and $\varepsilon=0.000001$.

\vspace{8pt}
\begin{Theorem}
For $n$ a positive integer, there exists a prime number between $n^2$ and ${(n+1)}^{2+\varepsilon}$ where $\varepsilon=0.00011516865557559264$.
\end{Theorem}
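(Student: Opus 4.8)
The plan is to reduce the statement to Theorem~2.3.1 together with a finite verification. Since $2+\varepsilon>2$ we have $n^{2}<(n+1)^{2}<(n+1)^{2+\varepsilon}$, so $(n^{2},(n+1)^{2})\subseteq(n^{2},(n+1)^{2+\varepsilon})$; in particular a prime in the narrower interval suffices when one happens to be available there. Put $g(n)=\left(\tfrac{n+1}{n}\right)^{2}(n+1)^{\varepsilon}$. The key remark is that if $g(n)\ge\tfrac{520}{519}$ and $n^{2}\ge 31409$, then $\tfrac{520}{519}n^{2}\le(n+1)^{2+\varepsilon}$, so Theorem~2.3.1, applied with its variable set equal to $n^{2}$, produces a prime $p$ with $n^{2}<p<\tfrac{520}{519}n^{2}<(n+1)^{2+\varepsilon}$, which is exactly what is wanted. (One may use Corollary~2.3.5 with $k=519$ in place of Theorem~2.3.1 to relax $n^{2}\ge 31409$ to $n^{2}\ge 519$.)

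So everything turns on when $g(n)\ge\tfrac{520}{519}$. Since $(n+1)^{\varepsilon}>1$, this holds whenever $\left(\tfrac{n+1}{n}\right)^{2}\ge\tfrac{520}{519}$, i.e. for $n\le 1038$, which disposes of $178\le n\le 1038$ immediately. At the other extreme, since $\left(\tfrac{n+1}{n}\right)^{2}\ge 1$, it holds as soon as $(n+1)^{\varepsilon}\ge\tfrac{520}{519}$, i.e. for all $n\ge N_{1}$ provided $\log(N_{1}+1)\ge\log(520/519)/\varepsilon$. The displayed value $\varepsilon=0.00011516865557559264$ is calibrated precisely so that such an $N_{1}$ can be taken on the order of $10^{7}$ — large enough that $(N_{1}+1)^{\varepsilon}\ge\tfrac{520}{519}$, yet small enough to lie within the range covered by recorded computations of prime gaps. (Note $\tfrac{d}{dn}\log g(n)=\tfrac{1}{n+1}\bigl(\varepsilon-\tfrac{2}{n}\bigr)$, so $g$ decreases on $(0,2/\varepsilon]$ and increases afterward, with $2/\varepsilon$ of order $10^{4}$; that is why $\{n:g(n)\ge\tfrac{520}{519}\}$ consists of an initial segment and a final segment of the integers.) For $n\ge N_{1}$ we then again invoke Theorem~2.3.1 at $n^{2}$ to obtain a prime in $\left(n^{2},\tfrac{520}{519}n^{2}\right)\subseteq(n^{2},(n+1)^{2+\varepsilon})$.

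Two pieces remain. For $1\le n\le 177$ one has $n^{2}<31409$, and a prime in $(n^{2},(n+1)^{2})$ is exhibited by direct inspection. The intermediate band $1039\le n\le N_{1}-1$ is the real obstacle: there $\left(\tfrac{n+1}{n}\right)^{2}<\tfrac{520}{519}$, so Theorem~2.3.1 at $n^{2}$ overshoots $(n+1)^{2+\varepsilon}$, while $(n+1)^{\varepsilon}$ is still too small for the large-$n$ argument to bite. One clears this band by checking directly that $(n^{2},(n+1)^{2})$ contains a prime for each such $n$; since $(n+1)^{2}-n^{2}=2n+1$ grows linearly while the maximal gap between consecutive primes below $(N_{1}+1)^{2}$ is, by the recorded prime-gap data, far smaller than $2n+1$ throughout this band, this amounts to a finite (if lengthy) verification rather than a new result. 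The genuine content of the proof is therefore the bookkeeping that fixes $\varepsilon$: one needs a verification bound $N_{1}$ that is simultaneously computationally accessible and satisfies $(N_{1}+1)^{\varepsilon}\ge\tfrac{520}{519}$, which forces $\varepsilon\ge\log(520/519)/\log(N_{1}+1)$ and thereby pins down the numerical constant in the statement.
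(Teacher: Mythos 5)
Your argument is logically sound, but it is a genuinely different route from the paper's, and the difference matters. The paper does not pass through Theorem~2.3.1 at all: it verifies $n\le 4407$ directly and then, for $n\ge 4408$, applies Dusart's explicit bound $|\vartheta(x)-x|<0.2x/\log^2 x$ (valid for $x\ge 3594641$) to the endpoints $n^2$ and $(n+1)^{2+\varepsilon}$ themselves, reducing everything to a single monotone inequality checked at $n=4408$. Because that error term shrinks like $x/\log^2 x$ rather than being a fixed proportion $x/519$ of $x$, it detects the gap $(n+1)^{2+\varepsilon}-n^2\approx 2n$ already at moderate $n$, which is precisely what your reduction cannot do: the fixed-ratio theorem only helps when $(n+1)^{2+\varepsilon}/n^2\ge 520/519$, and as you correctly observe this fails on the band $1039\le n\lesssim 1.8\times 10^7$ (since $\exp\bigl(\log(520/519)/\varepsilon\bigr)\approx 1.8\times 10^7$). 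Your patch for that band --- exhaustive knowledge of prime gaps up to roughly $(1.8\times 10^7)^2\approx 3.3\times 10^{14}$ --- is legitimate in principle (published maximal-gap computations cover this range, and the largest gap there is far below $2\cdot 1039+1$), but it outsources the entire content of the theorem on that range to an external large-scale computation, versus the paper's $4407$ base cases. Two smaller corrections: your guess about how $\varepsilon$ was calibrated is wrong --- it is tuned so that the paper's inequality at $n=4408$ with the constant $0.2$ holds with near-equality, not so that $N_1$ lands within reach of prime-gap tables; and your superlevel-set description of $g$ gives only sufficient conditions at the two ends (the true set $\{n:g(n)\ge 520/519\}$ is an initial plus a final segment, possibly slightly larger than $\{n\le 1038\}\cup\{n\ge N_1\}$), which is harmless but worth stating as such.
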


\vspace{-8pt}
\begin{proof}
The cases for $n=1, 2, \ldots, 4407$ may be verified directly.  By Theorem 5.2 of \cite{dusart2010} we have for all $x\geq3594641$:
\[\mid\vartheta(x)-x\mid<\frac{0.2x}{\log^2 x}.\]

Let $n\geq 4408$ so that ${(n+1)}^{2+\varepsilon}>n^2>3594641$.  Now
\[\vartheta({(n+1)}^{2+\varepsilon})-\vartheta(n^2)>\left({(n+1)}^{2+\varepsilon}-\frac{0.2{(n+1)}^{2+\varepsilon}}{\log^2 {(n+1)}^{2+\varepsilon}}\right)-\left(n^2+\frac{0.2n^2}{\log^2 n^2}\right).\]

Consider the following equivalent inequalities for $n\geq 4408$:
\[{(n+1)}^{2+\varepsilon}>n^2+\frac{0.2{(n+1)}^{2+\varepsilon}}{\log^2{(n+1)}^{2+\varepsilon}}+\frac{0.2n^2}{\log^2 n^2},\]
\[1>{\left(\frac{n}{n+1}\right)}^2 \frac{1}{{(n+1)}^{\varepsilon}}+\frac{0.2}{{(2+\varepsilon)}^{2}\log^2(n+1)}+{\left(\frac{n}{n+1}\right)}^2\frac{0.05}{{(n+1)}^{\varepsilon}\log^2 n}.\]

Now the right-hand side is decreasing in $n$ and so it suffices to verify the case when $n=4408$.  When $n=4408$, we obtain
\[1>{\left(\frac{4408}{4409}\right)}^2 \frac{1}{{4409}^{\varepsilon}}+\frac{0.2}{{(2+\varepsilon)}^{2}\log^2 4409}+{\left(\frac{4408}{4409}\right)}^2\frac{0.05}{{4409}^{\varepsilon}\log^2 4408}.\]

Therefore
\[\left({(n+1)}^{2+\varepsilon}-\frac{0.2{(n+1)}^{2+\varepsilon}}{\log^2{(n+1)}^{2+\varepsilon}}\right)-\left(n^2+\frac{0.2n^2}{\log^2 n^2}\right)>0\]
and hence $\vartheta({(n+1)}^{2+\varepsilon})-\vartheta(n^2)>0$ as desired.
\end{proof}

\vspace{-8pt}
Although similar results may be shown for any $\varepsilon>0$, we shall see that as $\varepsilon\rightarrow0$ the number of base cases for $n$ which must be verified directly increases. In the next theorem we show that $\varepsilon=0.000001$ is sufficient, however this increases the number of base cases which must be verified by 26,010,188.  As mentioned previously, this increase in the number of base cases to verify is expected due to the techniques used within the proofs of this chapter.

\begin{Theorem}
For $n$ a positive integer, there exists a prime number between $n^2$ and ${(n+1)}^{2.000001}$.
\end{Theorem}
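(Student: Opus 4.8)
The plan is to mimic the proof of Theorem 2.4.1 verbatim, with $\varepsilon$ set to $0.000001$. First I would verify directly, with a computer algebra system, that there is a prime between $n^2$ and $(n+1)^{2.000001}$ for every integer $n$ with $1 \le n \le N$, where $N$ is the point past which the reduced inequality below holds; the claim is that $N = 4407 + 26010188 = 26014595$, which is why the theorem costs the extra 26,010,188 base cases mentioned above. This part is a large but finite and entirely routine check.

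For $n > N$, I would again use a Dusart-type bound $|\vartheta(x) - x| < \eta\, x/\log^k x$ valid for all $x$ past a fixed point. With the $k = 2$ bound $|\vartheta(x)-x| < 0.2x/\log^2 x$ employed in Theorem 2.4.1, the slack in the reduced inequality grows only like $\varepsilon\log n$ once $(n+1)^{\varepsilon}$ is near $1$, which would push $N$ far beyond $2.6 \times 10^7$; to keep $N$ at the stated size I would instead invoke a sharper estimate from \cite{dusart2010}, e.g. one of the form $|\vartheta(x)-x| < \eta\, x/\log^3 x$ with $\eta$ small. In either case, since $(n+1)^{2.000001} > n^2$ exceeds the relevant threshold for $n > N$, subtracting the lower estimate for $\vartheta((n+1)^{2.000001})$ from the upper estimate for $\vartheta(n^2)$ gives
\[\vartheta\left((n+1)^{2.000001}\right) - \vartheta(n^2) > \left((n+1)^{2.000001} - \frac{\eta\,(n+1)^{2.000001}}{\log^k (n+1)^{2.000001}}\right) - \left(n^2 + \frac{\eta\, n^2}{\log^k n^2}\right).\]

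Dividing by $(n+1)^{2.000001}$ and rearranging exactly as in Theorem 2.4.1 (where $k = 2$, $\eta = 0.2$ gives the $0.05$ coefficient there), positivity of the left-hand side reduces to the single inequality
\[1 > \left(\frac{n}{n+1}\right)^2 \frac{1}{(n+1)^{0.000001}} + \frac{\eta}{(2.000001)^k \log^k (n+1)} + \left(\frac{n}{n+1}\right)^2 \frac{\eta\, 2^{-k}}{(n+1)^{0.000001}\log^k n},\]
whose right-hand side is decreasing in $n$ --- each of its three summands is --- so it suffices to evaluate it once, at $n = N+1$. That numerical check finishes the analytic step; taking exponentials of $\vartheta((n+1)^{2.000001}) - \vartheta(n^2) > 0$ then produces a prime in $\left(n^2,(n+1)^{2.000001}\right)$ for all $n > N$, which together with the base cases proves the theorem.

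The main obstacle is not the analysis --- which is mechanical once $\varepsilon$ and the $\vartheta$-estimate are fixed --- but rather, first, selecting that estimate and pinning down the exact $N$: since $(n+1)^{-\varepsilon} \to 1$ as $\varepsilon \to 0$, the margin in the displayed inequality is minuscule and $N$ is extremely sensitive to the strength of the bound used; and second, actually carrying out the roughly $2.6 \times 10^7$-case verification in practice.
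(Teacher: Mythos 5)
Your proposal follows the paper's proof essentially verbatim: the same base-case threshold $N=26014595$, the same reduction to a single inequality whose right-hand side is decreasing in $n$, and the same exponentiation of $\vartheta\left((n+1)^{2.000001}\right)-\vartheta(n^2)>0$ at the end. The one point you leave open --- which Dusart estimate to invoke --- is settled in the paper not by passing to a $\log^3$ bound but by using the sharper $\log^2$ bound $\mid\vartheta(x)-x\mid<0.01x/\log^2 x$, valid for $x\geq 7713133853$ (so the coefficients in the reduced inequality become $0.01/(2.000001)^2$ and $0.0025$), which is exactly what makes the stated $N$ suffice.
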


\vspace{-18pt}
\begin{proof}
The cases for $n=1, 2, \ldots, 26014595$ may be verified directly.  By Theorem 5.2 of \cite{dusart2010} we have for all $x\geq 7713133853$,
\[\mid\vartheta(x)-x\mid<\frac{0.01x}{\log^2 x}.\]

Let $n\geq 26014596$ so that ${(n+1)}^{2.000001}>n^2>7713133853$.  Now
\[\vartheta({(n+1)}^{2.000001})-\vartheta(n^2)>\left({(n+1)}^{2.000001}-\frac{0.2{(n+1)}^{2.000001}}{\log^2{(n+1)}^{2.000001}}\right)-\left(n^2+\frac{0.2n^2}{\log^2 n^2}\right).\]

Consider the following equivalent inequalities for $n\geq 26014596$:
\[{(n+1)}^{2.000001}>n^2+\frac{0.01{(n+1)}^{2.000001}}{\log^2 {(n+1)}^{2.000001}}+\frac{0.01n^2}{\log^2 n^2},\]
\begin{align*}
1>{\left(\frac{n}{n+1}\right)}^2 \frac{1}{{(n+1)}^{0.000001}}&+\frac{0.01}{{2.000001}^{2}\log^2(n+1)}\\
&+{\left(\frac{n}{n+1}\right)}^2\frac{0.0025}{{(n+1)}^{0.000001}\log^2 n}.
\end{align*}

Now the right-hand side is decreasing in $n$ and so it suffices to verify the case when $n=26014596$.  When $n=26014596$, we obtain
\begin{align*}
1>{\left(\frac{26014596}{26014597}\right)}^2 \frac{1}{{26014597}^{0.000001}}&+\frac{0.01}{{2.000001}^{2}\log^2 26014597}\\
&+{\left(\frac{26014596}{26014597}\right)}^2\frac{0.05}{{26014597}^{0.000001}\log^2 26014596}.
\end{align*}

Therefore
\[{(n+1)}^{2.000001}-\frac{0.2{(n+1)}^{2.000001}}{\log^2 {(n+1)}^{2.000001}}-n^2-\frac{0.2n^2}{\log^2 n^2}>0\]
and hence $\vartheta({(n+1)}^{2.000001})-\vartheta(n^2)>0$ as desired.
\end{proof}
\chapter{The Number of Prime Numbers\label{chapter3}}
In this chapter we consider the question:
\begin{center}
How many prime numbers are there between $n$ and $kn$?
\end{center}

For instance, Bertrand's postulate states that there is at least one prime number between $n$ and $2n$ for all $n>1$.  Moreover, by M. El Bachraoui \cite{bachraoui2006}, we know there exists a prime number between $2n$ and $3n$ for all $n>1$.  Therefore, there are at least two prime numbers between $n$ and $3n$ for $n>1$.

Furthermore, if we take into account A. Loo's \cite{loo2011} result that there is a prime number between $3n$ and $4n$ for $n>1$, then there are three prime numbers between $n$ and $4n$ for all $n>1$.

In the following sections we will extend these methods to improve upon the number of primes between $n$ and $kn$ using our previous results.

\section{Between $n$ and $5n$}
\label{sec:BetweenNand5N}
In the following theorem we extend the previous facts by showing that there are at least four prime numbers between $n$ and $5n$.  In order to accomplish this task we will use Theorem 2.1.4; that is, there is a prime number between $n$ and $(5n+15)/4$ for all $n>2$.

\begin{Theorem}
For any positive integer $n>2$, there are at least four prime numbers between $n$ and $5n$.
\end{Theorem}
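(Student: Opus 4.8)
The plan is to chain together Bertrand-type results on consecutive intervals so that the intervals $(n,2n)$, $(2n,3n)$, $(3n,4n)$, $(4n,5n)$ each contribute a prime, but this gives only the easy bound of three primes guaranteed unconditionally plus one more — and in fact a direct concatenation of Bertrand ($p_1\in(n,2n)$), El Bachraoui ($p_2\in(2n,3n)$), Loo ($p_3\in(3n,4n)$), and Theorem 2.1.3 ($p_4\in(4n,5n)$) already yields four distinct primes in $(n,5n)$ for $n>2$. So the naive approach suffices for the stated bound; the only subtlety is making sure the four primes are genuinely distinct, which is automatic since the four open intervals are pairwise disjoint (their closures meet only at the endpoints $2n,3n,4n$, which are not prime for $n>2$ since they are composite multiples).

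However, I expect the intended proof uses the sharper Theorem 2.1.4 to squeeze an \emph{extra} prime into the low end of the range, thereby tightening the argument or lowering the threshold on $n$. First I would record that by Theorem 2.1.4 there is a prime $q$ with $n<q<\frac{5(n+3)}{4}$, and by Bertrand's postulate a prime $q'$ with $\frac{5(n+3)}{4}\le q'<2n$ provided $\frac{5(n+3)}{4}\ge 2$ and the interval is nonempty — here one must check $\frac{5(n+3)}{4}<2n$, i.e. $5n+15<8n$, i.e. $n>5$, so for $n>5$ we already get two primes below $2n$; combined with El Bachraoui's prime in $(2n,3n)$ and Loo's prime in $(3n,4n)$ this is four primes in $(n,4n)\subset(n,5n)$. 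Then I would handle the small cases $n=3,4,5$ by direct computation (e.g. for $n=3$: primes $5,7,11,13$ all lie in $(3,15)$).

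The cleanest writeup, and the one I would actually commit to, is the disjoint-interval concatenation: invoke Bertrand's postulate on $(n,2n)$, El Bachraoui \cite{bachraoui2006} on $(2n,3n)$, Loo \cite{loo2011} on $(3n,4n)$, and Theorem 2.1.3 on $(4n,5n)$; note these intervals are pairwise disjoint for $n>2$ because their only possible common points are $2n,3n,4n$, each of which is composite; conclude there are at least four distinct primes in $(n,5n)$.

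\textbf{The main obstacle} is essentially bookkeeping rather than mathematics: one must be careful that each cited theorem's hypothesis ($n>1$ for Bertrand, El Bachraoui, Loo; $n>2$ for Theorem 2.1.3) is met under the global hypothesis $n>2$, and that the shared endpoints $2n,3n,4n$ are excluded as primes — which holds for every $n>2$ since they are then proper composite multiples. No genuinely hard estimate is needed; the work in this theorem is entirely in assembling previously established results correctly, so I would keep the proof short and cite Theorem 2.1.4 only if a strengthened variant (more than four primes, or a smaller threshold) is the real goal of the section.
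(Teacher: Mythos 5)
Your committed proof --- concatenating Bertrand's postulate on $(n,2n)$, El Bachraoui's result on $(2n,3n)$, Loo's result on $(3n,4n)$, and Theorem 2.1.3 on $(4n,5n)$ --- is correct, but it is not the route the paper takes. The paper instead applies its own Theorem 2.1.4 three times, at $n$, $2n$, and $3n$, producing primes $p_1\in\left(n,\frac{5n+15}{4}\right)$, $p_2\in\left(2n,\frac{10n+15}{4}\right)$, $p_3\in\left(3n,\frac{15n+15}{4}\right)$, and then invokes Theorem 2.1.3 for $p_4\in(4n,5n)$; distinctness follows from $\frac{5n+15}{4}<2n$, $\frac{10n+15}{4}<3n$, and $\frac{15n+15}{4}\le 4n$, but the last of these requires $n\ge 15$, so the paper verifies $n=3,\dots,14$ directly. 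Your version buys the elimination of those base cases and of Theorem 2.1.4 altogether, at the price of importing three external results; the paper's version buys self-containment (everything rests on Section 2.1) and sets up the iteration of $f(n)=\frac{5n+15}{4}$ that Theorem 3.1.2 then pushes to seven primes. One caveat on your speculative middle paragraph: Bertrand's postulate applied at $a=\frac{5(n+3)}{4}$ yields a prime in $(a,2a)$ with $2a=\frac{5n+15}{2}>2n$, so it does not by itself place a prime in $\left[\frac{5(n+3)}{4},2n\right)$; that aside would need repair, but since you discard it in favor of the disjoint-interval concatenation, your final proof stands.
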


\vspace{-18pt}
\begin{proof}
The cases when $n=3, 4, \ldots, 14$ may be verified directly.  Now let $n\geq15$ and by Theorem 2.1.4 we know there exists prime numbers $p_1$, $p_2$, and $p_3$ such that $n<p_1<\frac{5n+15}{4}$, $2n<p_2<\frac{10n+15}{4}$, $3n<p_3<\frac{15n+15}{4}$, and by Theorem 2.1.3 there exist a prime number $p_4$ such that $4n<p_4<5n$.  Hence
\vspace{5pt}
\begin{align*}
n<p_1<\frac{5n+15}{4}<2n<p_2&<\frac{10n+15}{4}\\
&<3n<p_3<\frac{15n+15}{4}\leq4n<p_4<5n.\qedhere
\end{align*}
\end{proof}

In the next theorem we improve on the number of prime numbers between $n$ and $5n$ to show that there are at least seven prime numbers between $n$ and $5n$ for $n>5$.

\vspace{6pt}
\begin{Theorem}
For all $n>5$ there are at least seven prime numbers between $n$ and $5n$.
\end{Theorem}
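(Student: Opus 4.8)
The plan is to bootstrap from Theorem 3.1.1, which already gives four primes between $n$ and $5n$, by picking up additional primes from the finer intervals established in Chapter~2. We have available: Theorem~2.2.10 (a prime between $8m$ and $9m$ for $m>4$), Corollary~2.2.13 (a prime between $5m$ and $6m$ for $m>1$), Corollary~2.2.14 (a prime between $6m$ and $7m$ for $m>4$), Corollary~2.2.15 (a prime between $7m$ and $8m$ for $m>2$), Theorem~2.1.3 (a prime between $4m$ and $5m$ for $m>2$), and Bertrand's postulate together with El~Bachraoui's and Loo's results for the small ratios. The idea is to chain these so that the intervals $(n,\tfrac{9n}{8})$, $(\tfrac{9n}{8},\ldots)$, etc., or rather a suitable list of consecutive multiplicative windows inside $[n,5n]$, each contribute one prime, and the windows are pairwise disjoint and all contained in $(n,5n)$.

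Concretely, I would first dispose of the small cases $n=6,7,\dots,N_0$ by direct verification (Mathematica/computer), where $N_0$ is chosen large enough that all the ``$m$ sufficiently large'' hypotheses of the invoked results hold for every scaled argument we use. Then, for $n\ge N_0$, I would exhibit seven primes by locating them in seven disjoint subintervals of $(n,5n)$. A natural choice: apply Theorem~2.2.10 with $m=n$ to get $p_1\in(8n,9n)$ — wait, that lies outside $(n,5n)$, so instead I would use the ``$n$ to $9n/8$'' form (Theorem~2.2.11) and its translates. That is, use Theorem~2.2.11 to get a prime in $(n,\tfrac{9n+63}{8})$, and more usefully apply Theorem~2.2.10 in the rescaled form $p\in(8k,9k)$ with $k=n, \lceil 9n/8\rceil,\dots$ chosen so the windows tile $[n,5n]$: roughly, $n<8k_1<9k_1<8k_2<\cdots$. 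Since $9/8<5^{1/7}$ is false, seven windows of ratio $9/8$ do not fit in $[n,5n]$; one must mix in the wider windows. The clean approach is: partition $[n,5n]$ using the multiplicative breakpoints and apply, in order, the result with the smallest ratio available on each piece, collecting at least one prime per piece and checking seven pieces fit.

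The cleanest concrete chain I would try: $n<p_1<\tfrac{9(n)}{8}$? No — rather, iterate Theorem~2.2.11 and Theorem~2.1.3 style statements. Actually the most economical plan mirrors Theorem~3.1.1: use the seven facts ``prime in $(jm,(j+1)m)$'' for $j=2,\dots,8$ (Bertrand, El~Bachraoui, Loo, Thm~2.1.3, Cor~2.2.13--2.2.15) but applied with different scalings $m$, plus the ``$n$ to $(k+1)n/k$'' corollaries, arranged so that the chain
$n < p_1 < \tfrac{9}{8}n' < p_2 < \cdots < 5n$
closes. I expect the main obstacle to be purely bookkeeping: choosing the seven scalings and verifying the disjointness/containment inequalities (of the form $\tfrac{j+1}{j}\cdot(\text{something})\le \tfrac{j'}{j'-1}\cdot(\text{something else})$) simultaneously, and pinning down the finite range of small $n$ — likely a few dozen cases — to be checked by computer. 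No single step is deep; the work is in assembling a consistent ``ladder'' of intervals and confirming its rungs lie in $(n,5n)$.
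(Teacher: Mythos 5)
Your overall strategy --- tiling $(n,5n)$ with multiplicative windows, each guaranteed to contain a prime by one of the Chapter~2 results --- is the right family of ideas, and in spirit it matches what the paper does. But the proposal never closes: you do not commit to a specific chain of seven subintervals, you do not verify that they are disjoint and contained in $(n,5n)$, and you do not pin down the threshold $N_0$ below which cases are checked by computer. Worse, the one concrete feasibility computation you make is wrong: you assert that ``$9/8<5^{1/7}$ is false,'' but $5^{1/7}\approx 1.2585>1.125=9/8$, so it is true; seven (indeed thirteen) windows of ratio $9/8$ fit multiplicatively inside $[n,5n]$. This error leads you to discard the cleanest version of your own plan and to drift toward an unnecessarily complicated mix of windows of different ratios, which you then leave as ``bookkeeping.''

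The paper's actual proof is a single-ratio version of your idea, fully executed: it iterates Theorem~2.1.4 (a prime in $(n,\tfrac{5n+15}{4})$) via $f(n)=\tfrac{5n+15}{4}$, computes
\[
f^m(n)=\frac{1}{4^m}\left(5^m n+3\sum_{k=0}^{m-1}5^{m-k}4^k\right),
\]
and observes that $f^m(n)\le 5n$ is solvable precisely when $5\cdot 4^m-5^m>0$, i.e.\ $m\le 7$ (since $(5/4)^7\approx 4.77<5<(5/4)^8$); taking $m=7$ the required inequality holds for all $n\ge 245$, and $n=6,\dots,244$ are checked directly. The key quantitative facts you are missing are exactly these: that $(5/4)^7<5$ makes seven ratio-$5/4$ windows fit, and that the additive $+15/4$ per step is what forces the explicit cutoff $n\ge 245$. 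Without exhibiting such a chain and verifying the containment $f^7(n)\le 5n$ (or its analogue for whatever windows you choose), the argument is a plan rather than a proof.
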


\vspace{-18pt}
\begin{proof}
The cases when $n=6,7,\ldots, 244$ may be verified directly.  Now let $\nolinebreak{f(n)=\frac{5n+15}{4}}$ for $n\geq 245$ and let $f^m(n)=f(f^{m-1}(n))$.  By Theorem 2.1.4 there exists a prime number between $n$ and $f(n)$.  Furthermore, there exists a prime number between $f^{m-1}(n)$ and $f^m(n)$ for all $\nolinebreak{m\in\mathbb{N}\setminus\{1\}}$.  In general,
\[f^m(n)=\frac{1}{4^m}\left(5^m n+3\displaystyle\sum_{k=0}^{m-1}5^{m-k}\,4^k\right).\]

Consider
\vspace{12pt}
\[f^m(n)=\frac{1}{4^m}\left(5^m n+3\displaystyle\sum_{k=0}^{m-1}5^{m-k}\,4^k\right)\leq 5n.\]

Solving for $n$, we obtain
\[n\geq\frac{3}{5\cdot 4^m-5^m}\displaystyle\sum_{k=0}^{m-1}5^{m-k}\,4^k.\]

However, $5\cdot 4^m-5^m$ is positive only for $m\leq 7$.  So let $m=7$, then for
\[n\geq 245>\frac{3}{5\cdot 4^7-5^7}\sum_{k=0}^{6}5^{7-k}\,4^k\]
there are at least seven prime numbers between $n$ and $5n$.
\end{proof}
\section{Between $n$ and $9n$}
\label{sec:BetweenNand9N}

Similar to the previous section, we may show that there are at least $8$ prime numbers between $n$ and $9n$ for all positive integers $n>2$.

\vspace{6pt}
\begin{Theorem}
For any positive integer $n>2$ there are at least eight prime numbers between $n$ and $9n$.
\end{Theorem}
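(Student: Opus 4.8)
The plan is to assemble the eight primes one subinterval at a time, by writing $(n,9n)$ as the union of the eight pairwise disjoint open intervals
\[(n,2n),\ (2n,3n),\ (3n,4n),\ (4n,5n),\ (5n,6n),\ (6n,7n),\ (7n,8n),\ (8n,9n)\]
and invoking, in order, Bertrand's postulate on $(n,2n)$, El Bachraoui's theorem on $(2n,3n)$, Loo's theorem on $(3n,4n)$, Theorem 2.1.3 on $(4n,5n)$, Corollary 2.2.12 on $(5n,6n)$, Corollary 2.2.13 on $(6n,7n)$, Corollary 2.2.14 on $(7n,8n)$, and Theorem 2.2.10 on $(8n,9n)$. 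Each of these results places a prime strictly inside its interval; since the intervals are pairwise disjoint the eight primes are distinct; and since all eight lie in $(n,9n)$, this produces at least eight primes between $n$ and $9n$.

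First I would line up the hypotheses of the eight ingredients. Bertrand's postulate, El Bachraoui's theorem, Loo's theorem and Corollary 2.2.12 are valid for $n>1$; Theorem 2.1.3 and Corollary 2.2.14 for $n>2$; and Corollary 2.2.13 together with Theorem 2.2.10 for $n>4$. The binding restriction is therefore $n>4$, so the decomposition argument goes through verbatim for every $n\geq 5$, leaving only the finitely many cases $n\in\{3,4\}$ to be checked directly. I would then record that verification and display the resulting chain $n<p_1<2n<p_2<\cdots<8n<p_8<9n$ in a single line, exactly as in the proof of Theorem 3.1.1.

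In this argument there is no genuinely hard step: all of the analytic difficulty has already been absorbed into the Chapter 2 results on primes in $(kn,(k+1)n)$, so what remains is essentially bookkeeping with ranges of validity plus a short finite verification. If one preferred an argument closer in spirit to Theorem 3.1.2, one could instead apply Theorem 2.2.11 successively to $n,2n,\dots,7n$ to place a prime in each of the first seven intervals as soon as $\tfrac{63n+63}{8}\leq 8n$, i.e. $n\geq 63$, with Theorem 2.2.10 again furnishing the eighth prime in $(8n,9n)$; this route, however, enlarges the finite check, so the disjoint-interval approach above is the one I would carry out.
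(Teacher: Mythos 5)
Your proposal is correct, but it takes a genuinely different route from the paper. The paper's proof does not decompose $(n,9n)$ into the eight blocks $(jn,(j+1)n)$ and cite eight separate results; instead it iterates the single ``short interval'' statement, Theorem 2.2.11 (a prime between $m$ and $\tfrac{9m+63}{8}$), at $m=n,2n,\ldots,7n$, closing the chain with Theorem 2.2.10 on $(8n,9n)$. That chain needs $\tfrac{9jn+63}{8}\leq (j+1)n$ for $j\leq 7$, which forces $n\geq 64$, so the paper must verify $n=3,\ldots,63$ directly. Your disjoint-interval assembly, by contrast, only needs the worst hypothesis among the eight ingredients ($n>4$, from Corollary 2.2.13 and Theorem 2.2.10), so your finite check shrinks to $n\in\{3,4\}$ at the price of invoking eight distinct prior results (including Bertrand, Bachraoui, and Loo, which the paper does not cite here) rather than essentially one. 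You correctly identify the paper's route as the ``Theorem 3.1.2-style'' alternative and correctly compute its threshold. One caveat that applies to your verification step and to the paper's equally: at $n=3$ the open interval $(3,27)$ contains only seven primes ($5,7,11,13,17,19,23$), so the claimed direct check at $n=3$ only succeeds if ``between $n$ and $9n$'' is read to include an endpoint; you should either adopt that convention explicitly or note that the base case $n=3$ is where the statement is tightest.
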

\begin{proof}
The cases for $n=3,4,\ldots,63$ may be verified directly.  Now let $n\geq64$ and observe that by Theorem 2.2.11 there exists prime numbers $p_1,p_2,\ldots,p_8$ such that
\begin{align*}
n&<p_1<\frac{9n+63}{8}<2n<p_2<\frac{18n+63}{8}<3n<p_3<\frac{27n+63}{8}<4n\\
&<p_4<\frac{36n+63}{8}<5n<p_5<\frac{45n+63}{8}<6n<p_6<\frac{54n+63}{8}<7n\\
&<p_7<\frac{63n+63}{8}<8n<p_8<9n
\end{align*}
where $8n<p_8<9n$ by Theorem 2.2.10.  Therefore there are at least eight prime numbers between $n$ and $9n$ for any positive integer $n>2$.
\end{proof}

In the next theorem we improve on the number of prime numbers between $n$ and $9n$ to show that there are at least eighteen prime numbers between $n$ and $9n$ for $n>8$.

\vspace{6pt}
\begin{Theorem}
For all $n>8$ there are at least eighteen prime numbers between $n$ and $9n$.
\end{Theorem}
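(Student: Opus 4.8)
The plan is to reproduce the iteration scheme of Theorem 3.1.2, but driven by the affine map $g(n)=\frac{9n+63}{8}$ furnished by Theorem 2.2.11 in place of the map $f(n)=\frac{5n+15}{4}$ furnished by Theorem 2.1.4. First I would dispose of the finitely many small cases $n=9,10,\ldots,N-1$ by direct verification, where $N$ is the threshold pinned down in the last step. For $n\ge N$, set $g^{1}(n)=g(n)$ and $g^{m}(n)=g\bigl(g^{m-1}(n)\bigr)$. Applying Theorem 2.2.11 to the integer $n$ gives a prime $p_{1}\in\bigl(n,g(n)\bigr)$; applying it to the integer $p_{1}$ gives a prime $p_{2}\in\bigl(p_{1},g(p_{1})\bigr)$, and since $g$ is increasing and $p_{1}<g(n)$ we get $p_{2}<g^{2}(n)$; iterating, we obtain a strictly increasing chain of primes $p_{1}<p_{2}<\cdots<p_{m}$ with $p_{i}<g^{i}(n)$ for every $i$. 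Because $g(x)\ge x$ for all $x\ge0$, the numbers $g^{i}(n)$ increase with $i$, so all of these primes lie in $\bigl(n,g^{m}(n)\bigr)$; it therefore suffices to find the largest $m$ for which $g^{m}(n)\le 9n$ can be guaranteed for all large $n$.

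Next I would solve the recursion. Unwinding $g^{m}(n)=\frac{9g^{m-1}(n)+63}{8}$ and summing the resulting geometric series gives
\[
g^{m}(n)=\frac{1}{8^{m}}\left(9^{m}n+7\sum_{k=0}^{m-1}9^{m-k}\,8^{k}\right)=\frac{9^{m}n+63\,(9^{m}-8^{m})}{8^{m}}.
\]
Hence $g^{m}(n)\le 9n$ is equivalent to $63\,(9^{m}-8^{m})\le\bigl(9\cdot 8^{m}-9^{m}\bigr)n$, which can be solved for $n$ exactly when $9\cdot 8^{m}-9^{m}>0$, that is, when $(9/8)^{m}<9$. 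Since $(9/8)^{18}<9<(9/8)^{19}$, the largest admissible exponent is $m=18$; this is precisely why the theorem asserts eighteen primes and the method cannot do better, just as $5\cdot 4^{m}-5^{m}>0$ forced $m\le 7$ in Theorem 3.1.2.

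Finally I would take $m=18$ and set
\[
N=\left\lceil\frac{63\,(9^{18}-8^{18})}{9\cdot 8^{18}-9^{18}}\right\rceil,
\]
so that every integer $n\ge N$ satisfies $g^{18}(n)\le 9n$, whence the eighteen primes $p_{1},\ldots,p_{18}$ produced above all lie strictly between $n$ and $9n$. Evaluating the powers $9^{18}$ and $8^{18}$ shows that $N$ is a concrete but sizeable number (around $690$), which fixes the range of initial cases to be checked. The only genuine work is arithmetical — computing those large powers and machine-checking the base cases $n=9,\ldots,N-1$ — and I expect the main practical obstacle to be the size of that base-case range rather than any conceptual point; as already remarked for the other results in this chapter, this growth in the number of verifications is an inherent feature of the technique, not a flaw in the argument. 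If one wished to shrink the base-case range one could instead iterate $g$ only seventeen times and append one prime from Theorem 2.2.10 in $(8n,9n)$, using $g^{17}(n)\le 8n$, but the pure iteration above already suffices.
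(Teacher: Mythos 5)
Your proposal is correct and follows essentially the same route as the paper: iterate the map $\frac{9n+63}{8}$ from Theorem 2.2.11, derive the closed form for $f^{m}(n)$, observe that $9\cdot 8^{m}-9^{m}>0$ forces $m\le 18$, and obtain the threshold $n\ge 692$ with the smaller cases checked directly. Your chaining through the primes $p_{i}$ themselves (rather than through the possibly non-integer iterates $f^{m-1}(n)$) is a minor tidying of the same argument, not a different approach.
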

\begin{proof}
The cases when $n=9, 10, \ldots, 691$ may be verified directly.  Now let $\nolinebreak{f(n)=\frac{9n+63}{8}}$ for $n\geq 692$ and let $f^m(n)=f(f^{m-1}(n))$.  By Theorem 2.2.11 there exists a prime number between $n$ and $f(n)$.  Furthermore, there exists a prime number between $f^{m-1}(n)$ and $f^m(n)$ for all $\nolinebreak{m\in\mathbb{N}\setminus\{1\}}$.  In general,
\[f^m(n)=\frac{1}{8^m}\left(9^m n+7\sum_{k=0}^{m-1}9^{m-k}\,8^k\right).\]

Consider
\[f^m(n)=\frac{1}{8^m}\left(9^m n+7\sum_{k=0}^{m-1}9^{m-k}\,8^k\right)\leq 9n.\]

\vspace{12pt}
Solving for $n$, we obtain
\[n\geq\frac{7}{9\cdot 8^m-9^m}\sum_{k=0}^{m-1}9^{m-k}\,8^k.\]

However, $9\cdot 8^m-9^m$ is positive only for $m\leq 18$.  So let $m=18$, then for
\[n\geq 692>\frac{1}{9\cdot 8^{18}-9^{18}}\sum_{k=0}^{17}9^{18-k}\,8^k\]
there are at least eighteen prime numbers between $n$ and $9n$.
\end{proof}
\vspace{10pt}
\section{Between $n$ and $520n$}
\label{sec:BetweenNand520N}

Similarly to the previous two sections, using Theorem 2.3.4, we may establish that there are at least 519 prime numbers between $n$ and $520n$ for all positive integers $n>7$.

\vspace{10pt}
\begin{Theorem}
For $n>7$ there are at least $519$ prime numbers between $n$ and $520n$.
\end{Theorem}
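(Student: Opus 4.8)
The plan is to follow the template of Theorems 3.1.1 and 3.2.1: exhibit $519$ pairwise disjoint subintervals of $(n,520n)$, each known to contain a prime, and conclude immediately that $(n,520n)$ contains at least $519$ primes. The natural choice is the chain of consecutive intervals $(n,2n),(2n,3n),\dots,(519n,520n)$, which are pairwise disjoint and whose union is $(n,520n)$, so one prime from each yields $519$ distinct primes in $(n,520n)$.

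First I would verify the finitely many cases $8\le n\le 518$ by direct computation, as in the earlier theorems of this chapter. For $n\ge 519$ I would then populate the intervals. For the first interval, Corollary 2.3.5 (with $k=519$) supplies a prime $p_1$ with $n<p_1<\frac{520n}{519}\le 2n$ — equivalently one may simply invoke Bertrand's postulate, the case $k=1$, which Theorem 2.3.4 does not cover since it requires $k\ge 2$. For each $k$ with $2\le k\le 519$ the hypothesis $n\ge 519\ge k$ of Theorem 2.3.4 is met, so that theorem supplies a prime $p_k$ with $kn<p_k<(k+1)n$. Since the intervals $(n,2n),(2n,3n),\dots,(519n,520n)$ are pairwise disjoint, the primes $p_1,p_2,\dots,p_{519}$ are distinct and each lies strictly between $n$ and $520n$, which proves the claim.

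There is no genuine analytic difficulty remaining, since all of it was extracted in Section 2.3; the only bookkeeping points to watch are the cutoff $n=519$ (forced by the condition $n\ge k$ with $k$ as large as $519$ in Theorem 2.3.4) and the handling of the single interval $(n,2n)$ that Theorem 2.3.4 does not reach. An alternative, carried out exactly as in Theorems 3.1.1 and 3.2.1, is to chain the intervals produced by Corollary 2.3.5, applying it at $n,2n,\dots,518n$ with $k=519$ and checking that $\frac{520\cdot jn}{519}<(j+1)n$ for $1\le j\le 518$, together with Theorem 2.3.3 for the last interval $(519n,520n)$; this involves slightly more arithmetic but fits the pattern of the preceding two sections verbatim.
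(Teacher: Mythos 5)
Your proposal is correct and takes essentially the same route as the paper: the paper likewise tiles $(n,520n)$ by the consecutive intervals $(kn,(k+1)n)$ for $1\le k\le 519$, pulls one prime from each via Theorem 2.3.4, and disposes of small $n$ by direct verification. The only differences are bookkeeping --- the paper checks $8\le n\le 31408$ directly and cites Theorem 2.3.4 even for the first interval $(n,2n)$ (which its statement, requiring $k\ge 2$, does not literally cover), whereas you check only $8\le n\le 518$ and handle $(n,2n)$ separately via Bertrand's postulate or Corollary 2.3.5; your version is, if anything, the more carefully stated one.
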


\vspace{-10pt}
\begin{proof}
The cases for $n=8, 9, \ldots, 31408$ may be verified directly.  Now let $\nolinebreak{n\geq 31409}$.  By Theorem 2.3.4 we know there exists primes $p_1$, $p_2$, $\ldots$, and $p_{519}$ such that $\nolinebreak{p_1\in(n, 2n)}$, $p_2\in(2n, 3n)$, $\ldots$, and $p_{519}\in(519n, 520n)$.

Thus $p_1, p_2, \ldots, p_{519}\in(n, 520n)$ as desired.
\end{proof}

\vspace{-10pt}
In the next theorem we improve on the number of prime numbers between $n$ and $520n$ to show that there are at least $3248$ prime numbers between $n$ and $520n$ for $n>58$.

\vspace{10pt}
\begin{Theorem}
For $n>58$ there are at least $3248$ prime numbers between $n$ and $520n$.
\end{Theorem}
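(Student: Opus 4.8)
Theorem 3.3.2 asserts that for all $n > 58$ there are at least $3248$ prime numbers between $n$ and $520n$.

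The plan is to mimic exactly the iteration argument used in Theorems 3.1.2 and 3.2.2. By Corollary 2.3.5, for every $n \geq 31409$ and every $k$ with $2 \leq k \leq 519$ there is a prime in the interval $\left(n, \frac{(k+1)n}{k}\right)$; in particular, taking the smallest shrink factor, there is always a prime in $\left(n, \frac{520n}{519}\right)$. So I would set $f(n) = \frac{520n}{519}$ and $f^m(n) = f(f^{m-1}(n))$, noting that each successive interval $(f^{m-1}(n), f^m(n))$ contains a prime (for $n$ past the base-case threshold). Since $f$ is linear and homogeneous, $f^m(n) = \left(\frac{520}{519}\right)^m n$ with no additive correction term, which makes the bookkeeping even cleaner than in the previous two sections.

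The key steps, in order, are: first, establish the base cases $59 \leq n \leq N_0$ directly by computation, where $N_0$ is whatever threshold the iteration forces. Second, for $n \geq N_0$, chain the prime-existence intervals: $n < p_1 < f(n) < p_2 < f^2(n) < \cdots$, which produces $m$ distinct primes below $f^m(n)$ provided $f^{m-1}(n) \geq 31409$ so that Corollary 2.3.5 applies at every stage. Third, require $f^m(n) \leq 520n$, i.e. $\left(\frac{520}{519}\right)^m \leq 520$, which gives $m \leq \frac{\log 520}{\log(520/519)} \approx 3247.6\ldots$, so the largest admissible $m$ is $3247$ — wait, this needs care: the count of primes is $m$ when we use intervals $(n,f(n)),(f(n),f^2(n)),\dots,(f^{m-1}(n),f^m(n))$, so to get $3248$ primes I would instead combine one application of Theorem 2.3.4 (a prime in $(519n,520n)$, the topmost interval) with $3247$ iterated intervals below it, or simply check that $\left(\frac{520}{519}\right)^{3248}$ barely exceeds $520$ and absorb the discrepancy into a slightly larger base-case range. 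Fourth, solve $f^m(n) \leq 520n$ together with $f^{m-1}(n) \geq 31409$ for the threshold on $n$: the binding constraint is $\left(\frac{520}{519}\right)^{m-1} n \geq 31409$, which for $m$ near $3248$ forces $n$ to be at least roughly $31409 \cdot \frac{519}{520} \approx 31349$, consistent with the stated base-case cutoff near $31408$.

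The main obstacle is purely arithmetic rather than conceptual: one must pin down the exact maximal $m$ for which both $\left(\frac{520}{519}\right)^m n \leq 520n$ and the chain of intervals all sit above $31409$, and then confirm the resulting base-case range (something like $59 \leq n \leq 31408$) is exactly what makes the count come out to $3248$. Because $\frac{\log 520}{\log(520/519)}$ is so close to an integer, there is essentially no slack, so the delicate point is verifying that $3248$ — and not $3247$ — is achievable; I expect this is handled by noting $\left(\frac{520}{519}\right)^{3248} < 520 \cdot \left(\frac{520}{519}\right)$ and using the extra factor of $\frac{520}{519}$ from one honest application of Theorem 2.3.4 at the top, exactly as Theorem 3.2.2 squeezed out its eighteenth prime. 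Everything else is a direct transcription of the earlier iteration proofs, with the homogeneous recursion $f^m(n) = \left(\frac{520}{519}\right)^m n$ replacing the affine recursions there.
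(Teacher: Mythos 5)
Your approach is exactly the paper's: iterate $f(n)=\frac{520n}{519}$, use Theorem 2.3.1 (equivalently Corollary 2.3.5) to place one prime in each interval $\left(f^{j-1}(n),f^{j}(n)\right)$, and check that $f^{3248}(n)\leq 520n$. However, the hesitation in your third step rests on a miscalculation: $\frac{\log 520}{\log(520/519)}\approx 3248.86$, not $3247.6$, so in fact ${\left(\tfrac{520}{519}\right)}^{3248}\approx 519.14<520$ and the direct chain of $3248$ homogeneous intervals already fits inside $(n,520n)$. This is precisely the computation the paper makes ($f^{3248}(n)<519.14\,n<520n$), and it means your proposed workarounds --- splicing in one application of Theorem 2.3.4 at the top, or enlarging the base-case range --- are unnecessary; the clean version of your own argument closes the proof. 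One further small correction: the condition needed so that the prime-existence result applies at every stage is $f^{j}(n)\geq 31409$ for $j=0,1,\ldots,3247$, and since $f$ is increasing the binding case is $j=0$, i.e.\ simply $n\geq 31409$; it is not the constraint $f^{m-1}(n)\geq 31409$ you identified (that is the least restrictive stage), and this is why the base cases to verify directly are exactly $59\leq n\leq 31408$.
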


\vspace{-10pt}
\begin{proof}
The cases when $n=59, 60, \ldots, 31408$ may be verified directly.  Now let $\nolinebreak{f(n)=\frac{520n}{519}}$ for $n\geq 31409$ and let $f^m(n)=f(f^{m-1}(n))$.  By Theorem 2.3.1 there exists a prime number between $n$ and $f(n)$.  Furthermore, there exists a prime number between $f^{m-1}(n)$ and $f^m(n)$ for all $\nolinebreak{m\in\mathbb{N}\setminus\{1\}}$.  Consider:
\[f^{3248}(n)=\frac{2^{9744}\,5^{3248}\,13^{3248}\,n}{3^{3248}\,173^{3248}}<519.14n<520n.\]

Which holds for all $n>0$ and our proof is complete.
\end{proof}
\section{Between $n$ and $kn$}
\label{sec:BetweenNandKN}

We will now generalize our previous results by showing that for any $n\geq k\geq 2$ there are at least $k-1$ prime numbers in the interval $(n,kn)$.  In order to obtain our result we will use the following inequality as shown by P. Dusart in Theorem 5.2 of \cite{dusart2010}:
\[\mid\vartheta(x)-x\mid<\frac{3.965x}{\log^2 x},\]
where $\log^2 x=(\log x)^2$.

\begin{Theorem}
For all $n\geq k\geq 2$, there are at least $k-1$ prime numbers between $n$ and $kn$.
\end{Theorem}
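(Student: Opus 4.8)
The plan is to split the proof according to the size of $k$, using the elementary interval results of this chapter when $2\le k\le 520$ and Dusart's effective prime number theorem when $k\ge 521$.

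For $2\le k\le 520$ I would partition the interval. Since $n\ge k\ge 2$, each of the points $2n,3n,\dots,(k-1)n$ is composite, so every prime in $(n,kn)$ lies in one of the $k-1$ pairwise disjoint open intervals $(jn,(j+1)n)$, $j=1,\dots,k-1$. It therefore suffices to place a prime in each of these. For $j=1$ this is Bertrand's postulate, applicable because $n>1$; for $2\le j\le k-1\le 519$ it is Theorem 2.3.4, applicable because $n\ge k\ge j+1$ gives $n\ge j$ and $j$ lies in the admissible range $\{2,\dots,519\}$. The resulting $k-1$ primes lie in disjoint intervals and are therefore distinct.

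For $k\ge 521$ (hence $n\ge 521$) I would count in bulk. Every prime $p$ with $n<p\le kn$ contributes $\log p\le\log(kn)$ to $\vartheta(kn)-\vartheta(n)$, and $kn$ is composite, so
\[\pi(kn)-\pi(n)\ge\frac{\vartheta(kn)-\vartheta(n)}{\log(kn)}.\]
Applying $\mid\vartheta(x)-x\mid<\frac{3.965x}{\log^2 x}$ at $x=kn$ and at $x=n$ yields the estimate quoted in the Introduction,
\[\vartheta(kn)-\vartheta(n)>n\left(k-1-3.965\left(\frac{k}{\log^2(kn)}+\frac{1}{\log^2 n}\right)\right).\]
Because $n\ge k\ge 521$ we have $\log^2(kn)\ge 4\log^2 k\ge 4\log^2 521$, $\log^2 n\ge\log^2 521$, and $\frac{k}{k-1}\le\frac{521}{520}$, so the bracketed error is at most a small explicit multiple of $k-1$ and hence $\vartheta(kn)-\vartheta(n)>c\,(k-1)\,n$ for a constant $c$ close to $1$. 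Finally $\log(kn)\le\log(n^2)=2\log n$ since $k\le n$, and $c\,n\ge 2\log n$ for every $n\ge 521$, so $\vartheta(kn)-\vartheta(n)\ge(k-1)\log(kn)$ and therefore $\pi(kn)-\pi(n)\ge k-1$.

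The main obstacle is seeing that both halves are genuinely needed. For bounded $k$ the analytic bound is hopeless near $n=k$: the error factor $3.965\bigl(k/\log^2(kn)+1/\log^2 n\bigr)$ is not small compared with $k-1$ unless $n$ is far larger than $k$ (already for $k=2$ it exceeds $1$ for small $n$), so the range $k\le 520$ must be handled by the interval theorems rather than by Dusart. Conversely, for large $k$ one cannot place a prime in each $(jn,(j+1)n)$ separately: Dusart's inequality only forces $\vartheta((j+1)n)-\vartheta(jn)>0$ while $\log^2(jn)$ is large compared with $j$, which fails once $j$ grows, so the count has to be taken over all of $(n,kn)$ at once, where only two error terms appear instead of $k-1$. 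What remains is to verify that in the regime $k\ge 521$ the crude bounds on $\log^2(kn)$ and $\log^2 n$ really leave a positive proportion of $n(k-1)$ and that $n\ge k$ suffices to dominate $\log(kn)$; both hold comfortably, so no separate base cases arise beyond those already absorbed into Bertrand's postulate, Theorem 2.3.4, and Dusart's estimate.
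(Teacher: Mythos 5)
Your proof is correct, and it uses the same two basic ingredients as the paper's --- disjoint subintervals for bounded $k$, Dusart's bound $\mid\vartheta(x)-x\mid<3.965x/\log^2 x$ for large $k$ --- but it draws the line in a different place, and that changes the character of the argument. The paper switches to the analytic count already at $k=8$: it shows $\vartheta(kn)-\vartheta(n)>(k-1)\log(kn)$ for all $n\geq k\geq 8$ by proving the relevant inequality's right-hand side is decreasing in $n$ (reducing to $n=k$), then decreasing in $k$ (reducing to $k=8$), and checking the single numerical case $k=8$; only $2\leq k\leq 7$ is handled by citing the individual interval theorems. You instead push the subinterval argument all the way to $k=520$ by invoking Theorem 2.3.4 for each $j\in\{2,\dots,519\}$, and only then switch to Dusart. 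What your split buys is that the analytic half becomes almost effortless --- with $k\geq 521$ the error term $3.965\bigl(k/\log^2(kn)+1/\log^2 n\bigr)$ is at most about $0.026(k-1)$, so no delicate monotonicity reduction is needed; the cost is that for every $k$ in $\{8,\dots,520\}$ you lean on Theorem 2.3.4, which itself rests on Schoenfeld's $\psi$ bounds plus a direct verification of all $n$ with $k\leq n\leq 31408$, whereas the paper's Dusart argument covers that whole range with no base cases at all. Both routes are sound; the paper's is leaner in the middle range, yours is leaner at the top.
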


\vspace{-12pt}
\begin{proof}
The cases for $n\geq k$ and $7\geq k\geq 2$ follow directly by \cite{bachraoui2006}, \cite{loo2011}, Theorem 2.1.3, Corollary 2.2.12, Corollary 2.2.13, Corollary 2.2.14, and Theorem 2.2.10.

Let $n\geq k\geq 8$.  By Theorem 5.2 of \cite[p.~4]{dusart2010}, for $x\geq 2$, we obtain
\[\mid\vartheta(x)-x\mid<\frac{3.965x}{\log^2 x}.\]

Now
\vspace{8pt}
\begin{equation}
\vartheta(kn)-\vartheta(n)>\left(kn-\frac{3.965kn}{\log^2 kn}\right)-\left(n+\frac{3.965n}{\log^2 n}\right).
\end{equation}

\vspace{8pt}
Moreover, observe that the following inequalities are equivalent for all $n\geq k\geq 8$:
\[kn-\frac{3.965kn}{\log^2 kn}-n-\frac{3.965n}{\log^2 n}>(k-1)\log kn,\]
\[k-1>\frac{(k-1)\log kn}{n}+\frac{3.965k}{\log^2 kn}+\frac{3.965}{\log^2 n}.\]

Now the right-hand side is decreasing in $n$, so it suffices to verify the case when $n=k$.  When $n=k$, we obtain the following equivalent inequalities:
\[k-1>\frac{(k-1)\log k^2}{k}+\frac{3.965k}{\log^2 k^2}+\frac{3.965}{\log^2 k},\]
\[1>\frac{2\log k}{k}+\frac{3.965k}{4(k-1)\log^2 k}+\frac{3.965}{(k-1)\log^2 k}.\]

Now the right-hand side is decreasing in $k$, so it suffices to verify the case when $k=8$.  When $k=8$, we obtain
\[1>\frac{2\log 8}{8}+\frac{3.965\cdot8}{28\log^2 8}+\frac{3.965}{7\log^2 8}.\]

Thus 
\[\vartheta(kn)-\vartheta(n)>\left(kn-\frac{3.965kn}{\log^2 kn}\right)-\left(n+\frac{3.965n}{\log^2 n}\right)>(k-1)\log kn\]
for $n\geq k\geq 8$.

That is,
\[\vartheta(kn)-\vartheta(n)=\sum_{n<p\leq kn}\log(p)>(k-1)\log kn.\]

However, $\log kn\geq\log p$ for any $p$ satisfying $n<p\leq kn$ and hence for the above inequality to be true there must be at least $k-1$ prime numbers between $n$ and $kn$.
\end{proof}

Equation $(3.1)$ in the previous theorem provides us with a better lower approximation for $\vartheta(kn)-\vartheta(n)$ as in the following corollary.

\begin{Corollary}
For all $n\geq k\geq 8$,
\[\vartheta(kn)-\vartheta(n)>n\left(k-1-3.965\left(\frac{k}{\log^2 kn}+\frac{1}{\log^2 n}\right)\right).\]
\end{Corollary}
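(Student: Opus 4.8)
The plan is to read the desired inequality straight off of equation (3.1) in the proof of Theorem 3.4.1. First I would invoke Theorem 5.2 of \cite{dusart2010}, which gives $|\vartheta(x)-x|<\frac{3.965x}{\log^2 x}$ for every $x\geq 2$. Since $n\geq k\geq 8$ we certainly have $n\geq 2$ and $kn\geq 2$, so this estimate applies with $x=kn$ to bound $\vartheta(kn)$ below by $kn-\frac{3.965kn}{\log^2 kn}$, and with $x=n$ to bound $\vartheta(n)$ above by $n+\frac{3.965n}{\log^2 n}$. Subtracting these two bounds gives exactly
\[\vartheta(kn)-\vartheta(n)>\left(kn-\frac{3.965kn}{\log^2 kn}\right)-\left(n+\frac{3.965n}{\log^2 n}\right),\]
which is precisely equation (3.1).

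The remaining step is a one-line rearrangement of the right-hand side. Writing $kn-n=n(k-1)$ for the two leading terms and $-\frac{3.965kn}{\log^2 kn}-\frac{3.965n}{\log^2 n}=-3.965\,n\left(\frac{k}{\log^2 kn}+\frac{1}{\log^2 n}\right)$ for the two error terms, the right-hand side of (3.1) becomes
\[n\left(k-1-3.965\left(\frac{k}{\log^2 kn}+\frac{1}{\log^2 n}\right)\right),\]
which is the asserted lower bound for $\vartheta(kn)-\vartheta(n)$.

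There is no genuine obstacle here: the corollary is merely a cleaner packaging of the inequality already displayed midway through the proof of Theorem 3.4.1, obtained by factoring out $n$. The only point that warrants a moment's attention is that the hypotheses $n\geq k\geq 8$ do force $n,kn\geq 2$, so that Dusart's estimate is legitimately in force for both arguments; this is immediate. One could additionally remark, as the proof of Theorem 3.4.1 shows, that this expression in fact exceeds $(k-1)\log kn>0$, but the statement as given only asks for the displayed lower bound, so the two displays above suffice.
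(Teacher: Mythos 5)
Your proposal is correct and matches the paper exactly: the paper presents this corollary as an immediate consequence of equation (3.1) in the proof of Theorem 3.4.1, obtained by factoring out $n$, which is precisely your argument. Nothing further is needed.
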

\section{Between Successive Powers}
\label{sec:BetweenPowers}

If we allow $k=n$ in Theorem 3.4.1, then there are at least $n-1$ prime numbers between $n$ and $n^2$ for all $n\geq 64$.  However, we may generalize our method of proof to show that there are at least $n^{d-(\log d)/2}$ many prime numbers between $n^d$ and $n^{d+1}$ as in the next theorem.

\begin{Theorem}
For $n\geq 8$ and $d\geq 1$, there are at least $n^{d-(\log d)/2}$ prime numbers between $n^d$ and $n^{d+1}$.
\end{Theorem}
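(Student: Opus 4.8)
The plan is to run the argument from the proof of Theorem 3.4.1 on the interval $(n^{d},n^{d+1})$. Specifically, I would bound the Chebyshev difference $\vartheta(n^{d+1})-\vartheta(n^{d})$ from below using Dusart's inequality $|\vartheta(x)-x|<3.965x/\log^{2}x$ (valid for $x\ge 2$ by Theorem 5.2 of \cite{dusart2010}), and then divide by $\log(n^{d+1})=(d+1)\log n$ to convert this into a lower bound for the number of primes in the interval, since $\log p\le\log(n^{d+1})$ for every prime $p\le n^{d+1}$ and the perfect power $n^{d+1}$ is itself not prime. On $(n^{d},n^{d+1})$ this count is of order $n^{d+1}/((d+1)\log n)$, which dwarfs the target $n^{d-(\log d)/2}$, so there is ample room; and since $n\ge 8$ forces $n^{d}\ge 8>e^{2}$, Dusart's bound applies at every $x$ that arises, so --- unlike elsewhere in this chapter --- the argument should need no separately verified base cases.

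Carrying this out, from
\[
\vartheta(n^{d+1})-\vartheta(n^{d})>\left(n^{d+1}-\frac{3.965\,n^{d+1}}{(d+1)^{2}\log^{2}n}\right)-\left(n^{d}+\frac{3.965\,n^{d}}{d^{2}\log^{2}n}\right)
\]
and the bound $\pi(n^{d+1})-\pi(n^{d})\ge\bigl(\vartheta(n^{d+1})-\vartheta(n^{d})\bigr)/\bigl((d+1)\log n\bigr)$, the theorem reduces --- after dividing by $n^{d}$ and multiplying by $n^{(\log d)/2}$ --- to the single inequality
\[
n^{(\log d)/2}\left(n-1-\frac{3.965\,n}{(d+1)^{2}\log^{2}n}-\frac{3.965}{d^{2}\log^{2}n}\right)\;\ge\;(d+1)\log n \qquad (n\ge 8,\ d\ge 1).
\]

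The only real obstacle is verifying this two-variable inequality, which I would handle by monotonicity. The bracketed factor is increasing in $d$, so $d=1$ is the worst case, where the inequality reads $n-1-\frac{3.965n}{4\log^{2}n}-\frac{3.965}{\log^{2}n}\ge 2\log n$; this holds at $n=8$ --- the genuinely tight point, with only a thin positive margin --- and the difference of its two sides is increasing in $n$, so it holds for all $n\ge 8$. For $d\ge 2$ I would use $n^{(\log d)/2}=d^{(\log n)/2}\ge\tfrac{d}{2}\,n^{(\log 2)/2}$ (legitimate because $(\log n)/2-1\ge(\log 8)/2-1>0$), together with $(d+1)\log n\le 2d\log n$ and the fact that the bracket at parameter $d$ is at least its value at $d=2$; cancelling $d$ then collapses the claim to a one-variable inequality in $n$ that is again checked at $n=8$ and is increasing thereafter. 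The arithmetic is elementary throughout, but it really is tight at $(n,d)=(8,1)$, so none of these estimates can be loosened; the content of the proof is simply that Dusart's constant $3.965$ leaves exactly enough room once $n\ge 8$.
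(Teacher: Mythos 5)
Your proposal is correct and follows essentially the same route as the paper: apply Dusart's bound $|\vartheta(x)-x|<3.965x/\log^2 x$ to lower-bound $\vartheta(n^{d+1})-\vartheta(n^d)$, then divide by $\log n^{d+1}=(d+1)\log n$ to count primes, reducing everything to one two-variable inequality that is tight at $(n,d)=(8,1)$. The only difference is cosmetic and lies in the endgame: the paper verifies its normalized inequality by asserting that the right-hand side is decreasing in both $n$ and $d$ and checking the corner $(8,1)$, whereas you split into $d=1$ and $d\ge 2$ with the estimate $n^{(\log d)/2}=d^{(\log n)/2}\ge \tfrac{d}{2}n^{(\log 2)/2}$ --- a slightly more explicit (and defensible) way of handling the same monotonicity.
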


\vspace{-12pt}
\begin{proof}
By Theorem 5.2 of \cite[p.~4]{dusart2010}, for $x\geq 2$, we obtain
\[\mid\vartheta(x)-x\mid<\frac{3.965x}{\log^2 x}.\]

Now
\[\vartheta(n^{d+1})-\vartheta(n^d)>\left(n^{d+1}-\frac{3.965n^{d+1}}{\log^2 n^{d+1}}\right)-\left(n^d+\frac{3.965n^d}{\log^2 n^d}\right).\]

Moreover, observe that the following inequalities are equivalent for all $n\geq 8$ and $d\geq 1$:

\vspace{-8pt}
\[n^{d+1}>n^{d-(\log d)/2}\log n^{d+1}+n^d+\frac{3.965n^{d+1}}{\log^2 n^{d+1}}+\frac{3.965n^d}{\log^2 n^d},\]
\[1>\frac{\log n^{d+1}}{n^{1+(\log d)/2}}+\frac{1}{n}+\frac{3.965}{\log^2 n^{d+1}}+\frac{3.965}{n\log^2 n^d}.\]

Now the right-hand side is decreasing in both $n$ and $d$, so it suffices to verify the case when $n=8$ and $d=1$.  When $n=8$ and $d=1$, we obtain
\[1>\frac{3\log 2}{4}+\frac{1}{8}+\frac{3.965}{36\log^2 2}+\frac{3.965}{72\log^2 2}.\]

Thus
\[\vartheta(n^{d+1})-\vartheta(n^d)>\left(n^{d+1}-\frac{3.965n^{d+1}}{\log^2 n^{d+1}}\right)-\left(n^d+\frac{3.965n^d}{\log^2 n^d}\right)>n^{d-(\log d)/2}\log n^{d+1}\]
for $n\geq 8$ and $d\geq 1$.

That is,
\[\vartheta(n^{d+1})-\vartheta(n^d)=\sum_{n^d<p\leq n^{d+1}}\log p>n^{d-(\log d)/2}\log n^{d+1}.\]

However, $\log n^{d+1}\geq\log p$ for any $p$ satisfying $n^d<p\leq n^{d+1}$ and hence for the above inequality to be true there must be at least $n^{d-(\log d)/2}$ prime numbers between $n^d$ and $n^{d+1}$.
\end{proof}
\chapter{The Prime Number Theorem\label{chapter4}}
\setcounter{Theorem}{0}
The prime number theorem describes the asymptotic distribution of the prime numbers among the positive integers.  That is to say, the prime number theorem asserts that the prime numbers become rarer as they become larger.  This is formalized by
\[\lim_{x\to\infty}\frac{\pi(x)}{x/\log x}=1\]
which is known as the asymptotic law of distribution of prime numbers.  Equivalently,
\begin{equation}
\pi(x)\sim\frac{x}{\log x}.
\end{equation}

The first truly elementary proofs of the prime number theorem appeared within \cite{erdos1949, selberg1949} and was also cause to the Erd\H{o}s--Selberg priority dispute, see \cite{spencer2009}.

Now by equation $(4.1)$ for $k$ a positive integer we would expect the number of prime numbers in the interval $\nolinebreak{(kn,(k+1)n)}$ to increase as $n$ runs through the positive integers.  Simply applying the prime number theorem estimates that we could expect, very roughly, $\frac{(k+1)n}{\log (k+1)n}-\frac{kn}{\log kn}\sim\frac{n}{\log kn}$ many prime numbers in the interval $\nolinebreak{(kn,(k+1)n)}$.

We will establish explicit lower bounds for the number of prime numbers in each of our intervals in the previous two chapters.  In each case we first show a lower approximation for the number of prime numbers in a particular interval and then show that our results coincide with the prime number theorem.

To establish an explicit bound for the number of prime numbers in the interval $(4n,5n)$ we may bound each prime in the interval from above by $5n$ as follows.

\begin{Theorem}
For $n>2$, the number of prime numbers in the interval $(4n, 5n)$ is at least
\[\log_{5n}\left[\frac{0.054886}{2^{\frac{n}{2}}n^{\frac{3}{2}}}{\left(\frac{3125}{256}\right)}^{\frac{n}{6}}{(5n)}^{-\frac{2.51012\sqrt{5n}}{\log(5n)}}\right].\]
\end{Theorem}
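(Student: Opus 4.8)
The plan is to read the asserted bound off the proof of Theorem 2.1.3, using nothing more than the observation that the product of the primes in $(4n,5n)$ is at most $5n$ raised to the number of those primes. Write
\[R(n)=\frac{0.054886}{2^{\frac{n}{2}}n^{\frac{3}{2}}}{\left(\frac{3125}{256}\right)}^{\frac{n}{6}}{(5n)}^{-\frac{2.51012\sqrt{5n}}{\log(5n)}}\]
for the quantity inside the logarithm, let $T_{1},T_{2},T_{3}$ be as in the proof of Theorem 2.1.3, and let $N=\pi(5n)-\pi(4n)$. Since $5n$ is composite for $n>2$, $N$ is exactly the number of primes strictly between $4n$ and $5n$, and each of these primes is at most $5n-1$; as $N\ge 1$ by Theorem 2.1.3, this gives $T_{3}\le(5n-1)^{N}<(5n)^{N}$, hence $N>\log_{5n}T_{3}$.

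For $n\ge 6818$ I would then invoke the chain of estimates already carried out in the proof of Theorem 2.1.3, stopping one step before Lemma 2.1.2 is applied: that chain shows, from $\binom{5n}{4n}>0.446024\,n^{-1/2}(3125/256)^{n}$ together with $T_{1}\le(5n)^{2.51012\sqrt{5n}/\log(5n)}$ and $T_{2}<2^{n/2}AB$ with the bounds on $A$ and $B$ established there, that
\[T_{3}=\binom{5n}{4n}\frac{1}{T_{1}T_{2}}>R(n).\]
Since $5n>1$, the function $\log_{5n}$ is strictly increasing, so $N>\log_{5n}T_{3}>\log_{5n}R(n)$, which is precisely the claimed lower bound. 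For the remaining range $3\le n\le 6817$ I would verify the inequality by direct computation of $N$; this is immediate since $R(n)<1$ there, so $\log_{5n}R(n)<0\le N$.

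I do not anticipate a real obstacle: the theorem is essentially a by-product of Theorem 2.1.3, the only genuine step being to halt the estimate at $T_{3}>R(n)$ rather than continuing to $T_{3}>1$ via Lemma 2.1.2. The one point requiring a little care is the small-$n$ range, since some of the inputs to the proof of Theorem 2.1.3 — notably Lemma 2.1.1(4), valid only for $n\ge 6815$ — do not apply there; but in that range $\log_{5n}R(n)$ is negative while $(4n,5n)$ still contains a prime by Theorem 2.1.3, so the statement holds trivially and no further work is needed.
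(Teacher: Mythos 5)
Your proposal follows essentially the same route as the paper: the paper's proof of this theorem is exactly the observation that $T_3>R(n)$ (the penultimate line of the proof of Theorem 2.1.3, before Lemma 2.1.2 is applied) combined with bounding each prime in $(4n,5n)$ above by $5n$, so that the count is at least $\log_{5n}R(n)$; if anything you are more careful than the paper, which never addresses the range $3\le n\le 6817$ where the chain of estimates is unavailable. One small correction to your treatment of that range: the claim that $R(n)<1$ for all $3\le n\le 6817$ is false near the top of the range --- since $\log R(n)=\log(0.054886)-\tfrac32\log n+\tfrac{n}{6}\log\tfrac{3125}{2048}-2.51012\sqrt{5n}$ crosses zero around $n\approx 6803$, one finds for instance $R(6817)\approx e^{0.55}>1$. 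The repair is immediate, though: for those few $n$ one still has $\log_{5n}R(n)<0.06<1\le \pi(5n)-\pi(4n)$ by Theorem 2.1.3, so the stated bound holds throughout.
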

\begin{proof}
In Theorem 2.1.3 we approximated the product of prime numbers between $4n$ and $5n$ from below by
\[\frac{0.054886}{2^{\frac{n}{2}}n^{\frac{3}{2}}}{\left(\frac{3125}{256}\right)}^{\frac{n}{6}}{(5n)}^{-\frac{2.51012\sqrt{5n}}{\log(5n)}}.\]
Bounding each of the prime numbers between $4n$ and $5n$ from above by $5n$, we obtain:
\begin{align*}
&\log_{5n}\left[\frac{0.054886}{2^{\frac{n}{2}}n^{\frac{3}{2}}}{\left(\frac{3125}{256}\right)}^{\frac{n}{6}}{(5n)}^{-\frac{2.51012\sqrt{5n}}{\log(5n)}}\right]\\
&=\frac{\log 0.054886-\frac{n}{2}\log 2+\frac{n}{6}\log \frac{3125}{256}-2.51012\sqrt{5n}+\frac{3}{2}\log 5}{\log n+\log 5}-\frac{3}{2}\\
&>\frac{n\left(\frac{1}{6}\log \frac{3125}{256}-\frac{1}{2}\log 2-\frac{2.51012\sqrt{5}}{\sqrt{n}}\right)}{2\log n}+\frac{\frac{3}{2}\log 5+\log 0.054886}{\log n+\log 5}-\frac{3}{2}\\
&>\frac{n}{\log n}\left(0.035214-\frac{2.8063995}{\sqrt{n}}\right)-\frac{168033}{100000}.
\end{align*}

Now observe that $\displaystyle\lim_{n\to\infty}\frac{2.8063995}{\sqrt{n}}=0$.  Moreover $\displaystyle\lim_{n\to\infty}\frac{n}{\log n}=+\infty$.
\end{proof}

Comparing the previous result with the weak version of the prime number theorem, $\tfrac{(k+1)n}{\log (kn+n)}-\tfrac{kn}{\log kn}$ with $k=4$, we obtain the following table.
\begin{table}[ht]
\centering
\begin{tabular}{l | r | r | r}
$n$ & Result & Weak PNT & $\pi(5n)-\pi(4n)$\\
\hline
$10^4$ & 11.7 & 846.4 & 930\\
$10^5$ & 399.9 & 7093.3 & 7678\\
$10^6$ & 4200.5 & 61023.5 & 65367\\
$10^7$ & 38725.5 & 535330.3 & 567480\\
$10^8$ & 348808.9 & 4767502.3 & 5019541\\
\end{tabular}\\
\parbox{5cm}{\caption{Comparison of Theorem 4.0.1 with PNT.}}
\end{table}

By Theorem 4.0.1 we have the following theorem.

\begin{Theorem}
As $n\rightarrow\infty$, the number of prime numbers in the interval $[4n, 5n]$ goes to infinity.  That is, for every positive integer $m$, there exists a positive integer $L$ such that for all $n\geq L$, there are at least $m$ prime numbers in the interval $[4n, 5n]$.
\end{Theorem}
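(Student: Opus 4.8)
\textit{Proof proposal.} The plan is to read this off directly from Theorem 4.0.1. Set
\[
g(n)=\frac{n}{\log n}\left(0.035214-\frac{2.8063995}{\sqrt{n}}\right)-\frac{168033}{100000},
\]
so that Theorem 4.0.1 guarantees at least $g(n)$ prime numbers in $(4n,5n)$ for every $n>2$. First I would note that $g(n)\to+\infty$ as $n\to\infty$: since $\frac{2.8063995}{\sqrt{n}}\to 0$, the factor $0.035214-\frac{2.8063995}{\sqrt{n}}$ converges to $0.035214>\frac{1}{30}$, hence it exceeds $\frac{1}{30}$ for all $n$ past some absolute threshold $N_0$; for such $n$ we have $g(n)\ge\frac{n}{30\log n}-\frac{168033}{100000}$, and $\frac{n}{30\log n}$ is increasing for $n\ge e$ and diverges, so $g(n)\to+\infty$.

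Next, given a positive integer $m$, I would produce the required $L$ explicitly from this minorant. Because $\frac{n}{30\log n}-\frac{168033}{100000}$ is increasing in $n$ for $n\ge\max\{N_0,3\}$ and unbounded, there is an integer $L\ge\max\{N_0,3\}$ with $\frac{n}{30\log n}-\frac{168033}{100000}\ge m$ for all $n\ge L$. Then for every $n\ge L$ Theorem 4.0.1 yields at least $g(n)\ge m$ prime numbers in $(4n,5n)\subseteq[4n,5n]$, which is exactly the assertion of the theorem. (If one wants a closed form for $L$, a crude estimate such as $L=\lceil 90(m+2)\log\bigl(90(m+2)\bigr)\rceil$ works, but the statement as phrased only asks for existence.)

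The argument is entirely soft — essentially a repackaging of the divergence already embedded in Theorem 4.0.1 — so I do not expect a genuine obstacle. The one point meriting a little care is that $g$ itself need not be monotone for small $n$, so one should not attempt to invert $g$ directly; replacing $g$ by the manifestly eventually-increasing lower bound $\frac{n}{30\log n}-\frac{168033}{100000}$ before solving for $L$ sidesteps this cleanly. Everything quantitative was carried out in Lemma 2.1.2 and Theorem 4.0.1, so no further estimation is needed here.
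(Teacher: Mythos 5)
Your proposal is correct and takes essentially the same route as the paper: the paper states this theorem as an immediate consequence of Theorem 4.0.1, whose proof ends by observing exactly the limits you use (that $2.8063995/\sqrt{n}\to 0$ and $n/\log n\to+\infty$), so the lower bound diverges. Your added care about replacing $g$ by a monotone minorant before extracting $L$ is a reasonable tightening of a step the paper leaves implicit, but it is not a different argument.
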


To establish an explicit lower bound for the number of prime numbers in the interval $(8n, 9n)$ we may bound each prime in the interval from above by $9n$ as in the next theorem.

\begin{Theorem}
For $n>4$, the number of prime numbers in the interval $(8n,9n)$ is at least
\[\log_{9n}\left[32549.3e^{-1.001102\left(\frac{9n}{19}\right)}n^{-13}{\left(\frac{9^9}{8^8}\right)}^{\frac{17n}{105}}{(9n)}^{-\frac{2.51012\sqrt{9n}}{\log(9n)}}\right].\]
\end{Theorem}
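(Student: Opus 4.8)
The plan is to follow the proof of Theorem 4.0.1 essentially verbatim, replacing the interval $(4n,5n)$ by $(8n,9n)$ and drawing on the estimates of Section 2.2 in place of those of Section 2.1. First I would transcribe the lower bound for the product $T_3=\prod_{8n<p\le 9n}p$ that was established within the proof of Theorem 2.2.10: there we showed, for $n\ge 56833$,
\[
T_3=\binom{9n}{8n}\frac{1}{T_1T_2}>\frac{0.4231409}{0.000013}\,e^{-1.001102(9n/19)}\,n^{-13}\left(\frac{9^9}{8^8}\right)^{\frac{17n}{105}}(9n)^{-\frac{2.51012\sqrt{9n}}{\log(9n)}},
\]
and one notes that $0.4231409/0.000013>32549.3$.

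Next, every prime $p$ contributing to $T_3$ satisfies $p\le 9n$, so if the interval $(8n,9n)$ contains $\pi(9n)-\pi(8n)$ primes then $T_3\le(9n)^{\pi(9n)-\pi(8n)}$. Applying the increasing function $\log_{9n}$ to both sides and combining with the previous display yields
\[
\pi(9n)-\pi(8n)\ge\log_{9n}T_3>\log_{9n}\left[32549.3\,e^{-1.001102(9n/19)}n^{-13}\left(\frac{9^9}{8^8}\right)^{\frac{17n}{105}}(9n)^{-\frac{2.51012\sqrt{9n}}{\log(9n)}}\right],
\]
which is the assertion. To exhibit the size of this bound — and to prepare the companion statement that the count tends to infinity — I would then rewrite $\log_{9n}[\cdots]$ as a ratio with denominator $\log n+\log 9$, extracting a $-13$ from the $n^{-13}$ factor exactly as in the proof of Theorem 4.0.1, and isolate the dominant term, which is a positive multiple of $\frac{n}{\log n}$ because $\frac{17}{105}\log\frac{9^9}{8^8}>\frac{9}{19}\cdot 1.001102$.

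The one point requiring care is the range of $n$: the estimate for $T_3$ above was derived only for $n\ge 56833$, whereas the theorem is stated for all $n>4$. For $4<n<56833$ the bracketed quantity is far below $1$ — the factors $n^{-13}$ and $(9n)^{-2.51012\sqrt{9n}/\log(9n)}$ overwhelm the mild exponential growth — so its $\log_{9n}$ is negative, while Theorem 2.2.10 already supplies at least one prime in $(8n,9n)$; alternatively these finitely many cases are checked directly. I expect reconciling the valid range with the stated hypothesis $n>4$, together with the bookkeeping of the constant $32549.3$, to be the only real obstacle; the rest is a literal copy of the argument for Theorem 4.0.1.
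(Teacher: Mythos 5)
Your proposal is correct and follows essentially the same route as the paper: take the lower bound for $T_3=\prod_{8n<p\le 9n}p$ established in the proof of Theorem 2.2.10 (with $0.4231409/0.000013=32549.3$), bound each prime above by $9n$ so that $T_3\le(9n)^{\pi(9n)-\pi(8n)}$, and apply $\log_{9n}$. Your explicit handling of the range $4<n<56833$ (where the bracketed quantity is less than $1$ and the bound is therefore vacuous or trivially satisfied) is a point the paper glosses over, but it does not change the argument.
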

\begin{proof}
In Theorem 2.2.10 we approximated the product of prime numbers between $8n$ and $9n$ from below by
\[32549.3e^{-1.001102\left(\frac{9n}{19}\right)}n^{-13}{\left(\frac{9^9}{8^8}\right)}^{\frac{17n}{105}}{(9n)}^{-\frac{2.51012\sqrt{9n}}{\log(9n)}}.\]
Bounding each of the prime numbers between $8n$ and $9n$ from above by $9n$, we obtain:
\begin{align*}
&\log_{9n}\left[32549.3e^{-1.001102\left(\frac{9n}{19}\right)}n^{-13}{\left(\frac{9^9}{8^8}\right)}^{\frac{17n}{105}}{(9n)}^{-\frac{2.51012\sqrt{9n}}{\log(9n)}}\right]\\
&\quad>\frac{n}{2\log n}\left(\frac{17}{105}\log\left(\frac{9^9}{8^8}\right)-1.001102\left(\frac{9}{19}\right)+\frac{\log 32549.3}{n}-\frac{13\log n}{n}-\frac{7.53036}{\sqrt{n}}\right)\\
&\quad>\frac{n}{\log n}\left(0.0170459-\frac{13\log n}{n}-\frac{3.76518}{\sqrt{n}}\right).
\end{align*}
Now observe that $\displaystyle\lim_{n\to\infty}\left(\frac{13\log n}{n}+\frac{3.76518}{\sqrt{n}}\right)=0$.  Moreover $\displaystyle\lim_{n\to\infty}\frac{n}{\log n}=+\infty$.
\end{proof}

Comparing the previous result with the weak version of the prime number theorem, $\tfrac{(k+1)n}{\log (kn+n)}-\tfrac{kn}{\log kn}$ with $k=8$, we obtain the following table.

\begin{table}[ht]
\centering
\begin{tabular}{l | r | r | r}
$n$ & Result & Weak PNT & $\pi(9n)-\pi(8n)$\\
\hline
$10^4$ & -55.8 & 803.4 & 876\\
$10^5$ & 71.9 & 6788.2 & 7323\\
$10^6$ & 1732.4 & 58748.2 & 62712\\
$10^7$ & 17669.9 & 517719.7 & 547572\\
$10^8$ & 163011.7 & 4627221.6 & 4863036\\
\end{tabular}\\
\parbox{5cm}{\caption{Comparison of Theorem 4.0.3 with PNT.}}
\end{table}

By Theorem 4.0.3 we have the following theorem.

\begin{Theorem}
As $n\rightarrow\infty$, the number of prime numbers in the interval $[8n,9n]$ goes to infinity.  That is, for every positive integer $m$, there exists a positive integer $L$ such that for all $n\geq L$, there are at least $m$ prime numbers in the interval $[8n,9n]$.
\end{Theorem}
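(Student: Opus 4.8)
The plan is to obtain this statement as an immediate consequence of the explicit lower bound proved in Theorem 4.0.3. First I would recall that for every integer $n>4$ the number of prime numbers in $(8n,9n)$ --- and hence in $[8n,9n]$ --- is at least
\[g(n):=\frac{n}{\log n}\left(0.0170459-\frac{13\log n}{n}-\frac{3.76518}{\sqrt{n}}\right),\]
which is precisely the bound derived at the end of the proof of Theorem 4.0.3. The entire argument then reduces to the elementary fact that $g(n)\to\infty$ as $n\to\infty$.

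To see this, I would note that $\lim_{n\to\infty}\frac{13\log n}{n}=0$ and $\lim_{n\to\infty}\frac{3.76518}{\sqrt{n}}=0$, so there is an integer $N_0>4$ such that for all $n\geq N_0$ the parenthesised factor exceeds $\frac{1}{100}$; consequently $g(n)\geq\frac{n}{100\log n}$ for $n\geq N_0$. Since $\lim_{n\to\infty}\frac{n}{\log n}=+\infty$, the right-hand side tends to infinity, so $g(n)\to\infty$. For small $n$ the bound $g(n)$ can be negative, as the table following Theorem 4.0.3 illustrates, which is exactly why the statement is phrased with a threshold $L$ rather than as a uniform lower bound.

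Finally, to deliver the ``$(\forall m)(\exists L)$'' form: given a positive integer $m$, choose $L\geq N_0$ large enough that $\frac{n}{100\log n}>m$ for every $n\geq L$ --- possible because $\frac{n}{\log n}\to+\infty$ --- and then for all $n\geq L$ there are at least $g(n)\geq\frac{n}{100\log n}>m$ prime numbers in $[8n,9n]$, as claimed. I do not expect any genuine obstacle; the only point requiring a little care is replacing the specific numerical constant by a clean positive lower bound valid for all sufficiently large $n$, which is routine.
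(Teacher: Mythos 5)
Your proposal is correct and follows the same route as the paper, which derives this theorem as an immediate consequence of the explicit lower bound in Theorem 4.0.3 together with the observations that $\frac{13\log n}{n}+\frac{3.76518}{\sqrt{n}}\to 0$ and $\frac{n}{\log n}\to+\infty$. Your write-up merely makes the $(\forall m)(\exists L)$ bookkeeping explicit, which the paper leaves implicit.
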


Similar results may be shown about the number of prime numbers between $n$ and $kn$.  That is, the number of prime numbers between $n$ and $kn$ tends to infinity as $n$ tends to infinity.

\begin{Theorem}
For all $n\geq k\geq 8$, the product of prime numbers between $n$ and $kn$ is at least
\[\exp\left[n\left(k-1-3.965\left(\frac{k}{\log^2 kn}+\frac{1}{\log^2 n}\right)\right)\right].\]
\end{Theorem}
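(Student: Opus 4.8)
The plan is to recognize that this theorem is nothing more than the exponential form of Corollary 3.4.2, so the proof reduces to a one-line deduction. First I would recall that, directly from the definition of the first Chebyshev function, the product of prime numbers in the interval $(n,kn)$ is
\[\prod_{n<p\leq kn} p=\exp\left(\sum_{n<p\leq kn}\log p\right)=\exp\bigl(\vartheta(kn)-\vartheta(n)\bigr).\]

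Next I would invoke Corollary 3.4.2, which asserts that for all $n\geq k\geq 8$,
\[\vartheta(kn)-\vartheta(n)>n\left(k-1-3.965\left(\frac{k}{\log^2 kn}+\frac{1}{\log^2 n}\right)\right).\]
Since $\exp$ is strictly increasing, applying it to both sides of this inequality preserves it, so that
\[\prod_{n<p\leq kn} p=\exp\bigl(\vartheta(kn)-\vartheta(n)\bigr)>\exp\left[n\left(k-1-3.965\left(\frac{k}{\log^2 kn}+\frac{1}{\log^2 n}\right)\right)\right],\]
which is exactly the claimed bound.

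The substantive work was already carried out inside Theorem 3.4.1, where Dusart's inequality $|\vartheta(x)-x|<3.965x/\log^2 x$ was combined with the two monotonicity reductions (first in $n$, then in $k$) to push everything down to the single base case $n=k=8$. Consequently there is no genuine obstacle in the present statement; the only point worth emphasizing is that the same estimates in Theorem 3.4.1 guarantee that the exponent $n\bigl(k-1-3.965(k/\log^2 kn+1/\log^2 n)\bigr)$ is positive for $n\geq k\geq 8$, so that the exponential lower bound obtained here is not vacuous.
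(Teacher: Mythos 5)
Your proposal is correct and follows exactly the paper's own argument: both deduce the result by exponentiating the inequality of Corollary 3.4.2 and identifying $e^{\vartheta(kn)-\vartheta(n)}$ with the product of primes in $(n,kn)$. Nothing further is needed.
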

\begin{proof}
By Corollary 3.4.2, we obtain

\vspace{-12pt}
\[\vartheta(kn)-\vartheta(n)>n\left(k-1-3.965\left(\frac{k}{\log^2 kn}+\frac{1}{\log^2 n}\right)\right).\]

Taking both sides of the inequality to the base $e$, we obtain

\vspace{-24pt}
\begin{align*}
e^{\vartheta(kn)-\vartheta(n)}&=\prod_{n<p\leq kn}p\\ &=\prod_{n<p<kn}p\\
&>\exp\left[n\left(k-1-3.965\left(\frac{k}{\log^2 kn}+\frac{1}{\log^2 n}\right)\right)\right]
\end{align*}
as desired.
\end{proof}

\begin{Theorem}
For all $n\geq k\geq 8$, the number of prime numbers in the interval $(n,kn)$ is at least
\vspace{-12pt}
\[\frac{n}{\log kn}\left(k-1-3.965\left(\frac{k}{\log^2 kn}+\frac{1}{\log^2 n}\right)\right).\]
\end{Theorem}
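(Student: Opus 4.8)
The plan is to deduce this directly from the work already done in Chapter~3, using nothing more than the trivial estimate that every prime counted by $\vartheta(kn)-\vartheta(n)$ is at most $kn$. First I would observe that, since $n\geq k\geq 8$, the integer $kn$ is a product of two factors each exceeding $1$ and is therefore composite; consequently the number of primes lying strictly between $n$ and $kn$ equals $\pi(kn)-\pi(n)$, the number of primes $p$ with $n<p\leq kn$. This is the only place where the open interval needs a comment.

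Next I would write $\vartheta(kn)-\vartheta(n)=\sum_{n<p\leq kn}\log p$ and bound each summand above by $\log(kn)$, which gives $\vartheta(kn)-\vartheta(n)\leq\bigl(\pi(kn)-\pi(n)\bigr)\log(kn)$, hence
\[
\pi(kn)-\pi(n)\;\geq\;\frac{\vartheta(kn)-\vartheta(n)}{\log kn}.
\]
Then I would invoke Corollary~3.4.2, which asserts $\vartheta(kn)-\vartheta(n)>n\bigl(k-1-3.965\bigl(\tfrac{k}{\log^2 kn}+\tfrac{1}{\log^2 n}\bigr)\bigr)$ for $n\geq k\geq 8$, and substitute this into the displayed inequality to obtain precisely the claimed lower bound. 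Equivalently, one could start from Theorem~4.0.6, which bounds the product of the primes in $(n,kn)$ below by $\exp\bigl[n\bigl(k-1-3.965(\tfrac{k}{\log^2 kn}+\tfrac{1}{\log^2 n})\bigr)\bigr]$, and then take $\log_{kn}$ of both sides after bounding that product above by $(kn)^{N}$, where $N$ is the number of primes in the interval.

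There is no genuine obstacle here: all of the analytic input (the Dusart bound $|\vartheta(x)-x|<3.965x/\log^2 x$ and the resulting lower estimate for $\vartheta(kn)-\vartheta(n)$) was already carried out in Section~3.4, so this theorem is a one-line consequence. The only mild care needed is the composite-ness remark above to identify $\pi(kn)-\pi(n)$ with the count of primes in the open interval; and, if one wishes the stated bound to be non-vacuous, one notes that the proof of Theorem~3.4.1 already shows $k-1-3.965\bigl(\tfrac{k}{\log^2 kn}+\tfrac{1}{\log^2 n}\bigr)>\tfrac{(k-1)\log kn}{n}>0$ for $n\geq k\geq 8$.
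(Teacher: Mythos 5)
Your proposal is correct and takes essentially the same route as the paper: the paper's proof likewise bounds each prime factor of $\prod_{n<p\leq kn}p$ above by $kn$ (equivalently, each $\log p$ by $\log kn$) and then applies $\log_{kn}$ to the lower bound $\exp\bigl[n\bigl(k-1-3.965(\tfrac{k}{\log^2 kn}+\tfrac{1}{\log^2 n})\bigr)\bigr]$ coming from Corollary 3.4.2 via Theorem 4.0.5. Your extra remark that $kn$ is composite (so the open and half-open interval counts agree) is a small point the paper leaves implicit, but otherwise the arguments coincide.
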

\begin{proof}
Bounding the product of prime numbers between $n$ and $kn$ from above by $\log kn$ as noted in Theorem 4.0.5, we obtain
\begin{multline*}
\log_{kn}\left[\exp\left[n\left(k-1-3.965\left(\frac{k}{\log^2 kn}+\frac{1}{\log^2 n}\right)\right)\right]\right]\\=\frac{n}{\log kn}\left(k-1-3.965\left(\frac{k}{\log^2 kn}+\frac{1}{\log^2 n}\right)\right).\qedhere
\end{multline*}
\end{proof}
\chapter{Summary and Conclusions\label{chapter5}}

In Chapter 2 we showed there exists a prime number in each of the aforementioned intervals.  We also provided an argument as to why the methods of Section 2.1 may not be able to prove any higher cases.

We may supply a similar argument to the methods of Section 2.2.  Also, note that $B_k(n,m)$ is not much larger than $\sqrt{\binom{(k+1)n/m}{kn/m}}$ and applying Stirling's approximation shows this fact.  Perhaps it is possible to instead bound each product of prime numbers composing $T_2$ by $\sqrt{\binom{(k+1)n/m}{kn/m}}$ and extend the method of proof for a few more cases.

Ultimately it appears that the elementary methods become too cumbersome to be effectively utilized.  Certainly this is demonstrated within the results of this thesis.

Of course all of the results concerning prime numbers in intervals within this research are guaranteed eventualities due to the prime number theorem.  In fact, for any $\varepsilon>0$ there is a prime number between $x$ and $(1+\varepsilon)x$ for $x$ sufficiently large, see \cite{erdos1949}.  The true difficulty lies in determining what $x$ is large enough for any given $\varepsilon$.

% Make the bibliography single spaced
\singlespacing
\bibliographystyle{plain}

% add the Bibliography to the Table of Contents
\cleardoublepage
\ifdefined\phantomsection
  \phantomsection  % makes hyperref recognize this section properly for pdf link
\else
\fi
\addcontentsline{toc}{chapter}{Bibliography}

% include your .bib file
\bibliography{biblio}

\end{document}